\newtheorem{theorem}{Theorem}[section]
\newtheorem{lemma}[theorem]{Lemma}
\newtheorem{proposition}[theorem]{Proposition}
\theoremstyle{definition}
\newtheorem{definition}[theorem]{Definition}
\newtheorem{notation}[theorem]{Notation}
\newtheorem{example}[theorem]{Example}
\theoremstyle{remark}
\newtheorem{remark}[theorem]{Remark}
\numberwithin{equation}{section}
 \DeclareMathOperator{\Hom}{Hom}
 \DeclareMathOperator{\id}{id}
 \DeclareMathOperator{\im}{im}
 \DeclareMathOperator{\rank}{rank}
 \DeclareMathOperator{\corank}{corank}
 \DeclareMathOperator{\terminus}{terminus}
\renewcommand{\d}{\partial}
\newcommand{\e}{\epsilon}
\renewcommand{\L}{\Lambda}
\newcommand{\Or}{\mathcal{O}}
\newcommand{\w}{\wedge}
\newcommand{\Z}{\mathbf{Z}}
\begin{document}

\title{The algebraic structure of the universal complicial sets}

\author{Richard Steiner}

\address{School of Mathematics and Statistics, University of
Glasgow, University Gardens, Glasgow, Great Britain G12 8QW}

\email{Richard.Steiner@glasgow.ac.uk}

\subjclass[2010]{18D05}

\keywords{complicial set, oriental, omega-category}

\begin{abstract}
The nerve of a strict omega-category is a simplicial set with
additional structure, making it into a so-called complicial
set, and strict omega-categories are in fact equivalent to
complicial sets. The nerve functor is represented by a sequence
of strict omega-categories, called orientals, which are
associated to simplexes. In this paper we give a detailed
algebraic description of the morphisms between orientals. The
aim is to describe complicial sets algebraically, by operators
and equational axioms.
\end{abstract}

\maketitle

\section{Introduction} \label{S1}

The orientals or oriented simplexes are a sequence of strict
$\omega$-categories
$$\Or_0,\Or_1,\ldots$$
associated to simplexes. They were discovered by Street, who
described them as fundamental objects in nature~\cite{Str}. A
strict $\omega$-category~$X$ has a \emph{nerve}, consisting of
the sequence of morphism sets
$$\Hom(\Or_0,X),\Hom(\Or_1,X),\ldots;$$
Verity~\cite{V} has shown that the nerve functor makes the
category of strict $\omega$-categories equivalent to a category
of simplicial sets with additional structure, called
\emph{complicial sets}.

By definition, a complicial set is a simplicial set with a
distinguished class of elements, called \emph{thin elements},
subject to certain axioms; Verity's theorem therefore amounts
to a description of strict $\omega$-categories in combinatorial
terms. There is an analogous cubical theory~\cite{ABS} which
gives a more algebraic description, in terms of cubical sets
with additional operations and equational axioms. This paper is
part of a programme aimed at producing a similar algebraic
description for complicial sets, using operations and
equational axioms rather than distinguished subsets.

In this paper we consider the universal examples; in other
words, we consider the nerves of the orientals themselves. This
is in fact a purely algebraic problem. The category of
orientals can be embedded in the category of chain complexes
and chain maps~\cite{Ste}: the objects are the chain complexes
of the standard simplexes; the morphisms are the
augmentation-preserving chain maps taking standard basis
elements to sums of standard basis elements. This gives a
simple algebraic description of the nerves of orientals, as
subsets of graded abelian groups, but we really need an
internal description independent of any supersets. We will
therefore solve the following problem: find an algebraic
structure on graded sets, consisting of internal operations and
equational axioms, such that the nerve of~$\Or_n$ is freely
generated by its identity endomorphism~$\iota_n$. The
equational axioms which solve this problem will be called
\emph{complicial identities;} they are listed in
Definition~\ref{D6.1}.

The structure of the paper is as follows. In Section~\ref{S2}
we recall the description of orientals in terms of chain maps.
In Section~\ref{S3} we describe the additional operations (they
were introduced with different notation and terminology
in~\cite{Ste}). In Section~\ref{S4} we show that the nerve
of~$\Or_n$ is generated by~$\iota_n$ (see Theorem~\ref{T4.12}); this was also done
in~\cite{Ste}, but here we give more precise details and in
effect obtain canonical forms for the elements of the nerve (see Theorem~\ref{T4.11}). In
Section~\ref{S5} we give some additional properties of the
nerves, for later use. In Section~\ref{S6} we describe the
complicial identities, and we show that they are satisfied in
the nerves of orientals. In Section~\ref{S7} we describe
certain consequences of the complicial identities, and in
Section~\ref{S8} we prove the main theorem
(Theorem~\ref{T8.7}), showing that the nerve of~$\Or_n$ is the
set with complicial identities freely generated by~$\iota_n$.

\section{Orientals and chain complexes} \label{S2}

From~\cite{Ste} we recall the description of the category of
orientals in terms of chain maps. For $n=0,1,\ldots\,$ let
$\Z\Delta(n)$ be the cellular chain complex of the standard
$n$-simplex. We regard $\Z\Delta(n)$ as a free graded abelian
group with a prescribed basis. The basis elements, written in
the form $[a_0,\ldots,a_q]$, correspond to the $(q+1)$-tuples
of integers $a_0,\ldots,a_q$ such that
$$0\leq a_0<a_1<\ldots<a_q\leq n;$$
a basis element $[a_0,\ldots,a_q]$ is homogeneous of
degree~$q$. The boundary homomorphism
$\d\colon\Z\Delta(n)\to\Z\Delta(n)$, which lowers degrees
by~$1$, is given on basis elements of positive degree by
$$\d[a_0,\ldots,a_q]
 =\sum_{i=0}^q(-1)^i [a_0,\ldots,a_{i-1},a_{i+1},\ldots,a_q].$$
There is also an augmentation homomorphism
$\e\colon\Z\Delta(n)\to\Z$, which is given on basis elements of
degree~$0$ by
$$\e[a_0]=1,$$
and which vanishes on basis elements of positive degree.

We will write $\Z\Delta(m,n)$ for the abelian group consisting
of the chain maps from $\Z\Delta(m)$ to $\Z\Delta(n)$; thus a
member of $\Z\Delta(m,n)$ is a degree-preserving abelian group
homomorphism $f\colon\Z\Delta(m)\to\Z\Delta(n)$ such that $\d
f=f\d$. Ordinary function composition makes the groups $\Z\Delta(m,n)$ into the morphism sets of a category $\Z\Delta$ with objects $0,1,2,\ldots\,$. For each fixed target~$n$, it is convenient to regard the sequence
$$\Z\Delta(0,n),\ \Z\Delta(1,n),\ \ldots$$
as a graded abelian group $\Z\Delta(-,n)$. 

We will also write $\Or(m,n)$ for the subset of
$\Z\Delta(m,n)$ consisting of the chain maps~$f$ which are
augmentation-preserving ($\e f=\e$) and which take basis
elements to sums of basis elements (if $a$~is a basis element
then $f(a)=b_1+\ldots+b_k$ for some $k\geq 0$ and for some
basis elements $b_1,\ldots,b_k$). As before, we get a category~$\Or$ with morphism sets $\Or(m,n)$, and we get graded sets $\Or(-,n)$. It follows from~\cite{Ste} that there are natural bijections
$$\Or(m,n)\cong\Hom(\Or_m,\Or_n),$$
where $\Hom(\Or_m,\Or_n)$ is the set of strict $\omega$-category morphisms from the oriental~$\Or_m$ to the oriental~$\Or_n$; the category~$\Or$ will therefore be called the \emph{category of orientals}. Because of the bijections, the nerves of the orientals~$\Or_n$ can be identified with the graded sets $\Or(-,n)$. We will describe the nerves by using these identifications.

\section{Operations in the nerves of orientals} \label{S3}

In this section we construct three families of operations in
$\Or(-,n)$: \emph{face operations}, \emph{degeneracy
operations} and \emph{wedge operations}. They will be
restrictions of operations in $\Z\Delta(-,n)$. The face and
degeneracy operations come from the standard simplicial set
structure on $\Z\Delta(-,n)$, and we begin by recalling the
definition of a simplicial set.

\begin{definition} \label{D3.1}
A \emph{simplicial set}~$X$ is a sequence of sets
$X_0,X_1,\ldots\,$ together with face operations
$$\d_i\colon X_m\to X_{m-1}\quad (m>0,\ 0\leq i\leq m)$$
and degeneracy operations
$$\e_i\colon X_m\to X_{m+1}\quad (0\leq i\leq m)$$
such that
\begin{align*}
 &\d_i\d_j=\d_{j-1}\d_i\quad (i<j),\\
 &\d_i\e_j=\e_{j-1}\d_i\quad (i<j),\\
 &\d_i\e_i=\d_{i+1}\e_i=\id,\\
 &\d_i\e_j=\e_j\d_{i-1}\quad (i>j+1),\\
 &\e_i\e_j=\e_{j+1}\e_i\quad (i\leq j).
\end{align*}
\end{definition}

It is well known that simplicial set operations $X_m\to X_p$ correspond to non-decreasing functions
$$\{0,1,\ldots,p\}\to\{0,1,\ldots,m\}$$
between sets of integers; a face operation $\d_i\colon X_m\to X_{m-1}$ corresponds to the function~$\d_i^\vee$ with 
$$\d_i^\vee(0)=0,\ \ldots,\ \d_i^\vee(i-1)=i-1,\ 
 \d_i^\vee(i)=i+1,\ \ldots,\ \d_i^\vee(m-1)=m,$$
and a degeneracy $\e_i\colon X_m\to X_{m+1}$ corresponds to the function~$\e_i^\vee$ with
$$\e_i^\vee(0)=0,\ \ldots,\ \e_i^\vee(i)=\e_i^\vee(i+1)=i,\ \ldots,\ \e_i^\vee(m+1)=m.$$
We will now describe the corresponding operations in $\Z\Delta(-,n)$ in terms of the basis elements $[a_0,\ldots,a_q]$ for the chain groups $\Z\Delta(p)$. For these basis elements we will use notations such as $[\mathbf{b},\mathbf{c}]$ or
$[\mathbf{b},i,\mathbf{c}]$, where
$\mathbf{b}$~and~$\mathbf{c}$ are suitable sequences of
integers. The outcome is as follows.

\begin{definition} \label{D3.2}
Let $x$ be a chain map in $\Z\Delta(m,n)$.

For $m>0$ and $0\leq i\leq m$, the \emph{face} $\d_i x$ is the
chain map in $\Z\Delta(m-1,n)$ given on basis elements by
$$(\d_i x)[\mathbf{b},\mathbf{c}]=x[\mathbf{b},\mathbf{c}'],$$
where the terms of~$\mathbf{b}$ are less than~$i$, the terms
of~$\mathbf{c}$ are greater than or equal to~$i$, and the terms
of~$\mathbf{c}'$ are obtained from those of~$\mathbf{c}$ by
adding~$1$.

For $0\leq i\leq m$ the \emph{degeneracy} $\e_i x$ is the chain
map in $\Z\Delta(m+1,n)$ given on basis elements by
\begin{align*}
 &(\e_i x)[\mathbf{b},\mathbf{c}]=x[\mathbf{b},\mathbf{c}''],\\
 &(\e_i x)[\mathbf{b},i,\mathbf{c}]
 =(\e_i x)[\mathbf{b},i+1,\mathbf{c}]
 =x[\mathbf{b},i,\mathbf{c}''],\\
 &(\e_i x)[\mathbf{b},i,i+1,\mathbf{c}]=0,
\end{align*}
where the terms of~$\mathbf{b}$ are less than~$i$, the terms
of~$\mathbf{c}$ are greater than $i+1$, and the terms
of~$\mathbf{c}''$ are obtained from those of~$\mathbf{c}$ by
subtracting~$1$.
\end{definition}

We get the following result.

\begin{proposition} \label{P3.3}
The face and degeneracy operations
$$\d_i\colon\Z\Delta(m,n)\to\Z\Delta(m-1,n),\quad
 \e_i\colon\Z\Delta(m,n)\to\Z\Delta(m+1,n)$$
are group homomorphisms making $\Z\Delta(-,n)$ into a
simplicial set. They restrict to operations
$$\d_i\colon\Or(m,n)\to\Or(m-1,n),\quad
 \e_i\colon\Or(m,n)\to\Or(m+1,n)$$
making $\Or(-,n)$ into a simplicial set.
\end{proposition}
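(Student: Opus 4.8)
The plan is to recognise the operations of Definition~\ref{D3.2} as \emph{precomposition} with fixed chain maps, and then to read off all the assertions from functoriality. For a non-decreasing function $f\colon\{0,\ldots,p\}\to\{0,\ldots,m\}$ I write $\Z\Delta(f)\colon\Z\Delta(p)\to\Z\Delta(m)$ for the chain map sending a basis element $[a_0,\ldots,a_q]$ to $[f(a_0),\ldots,f(a_q)]$ when the values $f(a_0),\ldots,f(a_q)$ are distinct and to~$0$ otherwise; this is the standard chain map induced on cellular chains by the simplicial operator with vertex map~$f$, and the assignment $f\mapsto\Z\Delta(f)$ is functorial. Precomposition then gives, for each such~$f$, a homomorphism $f^*\colon\Z\Delta(m,n)\to\Z\Delta(p,n)$, $f^*(x)=x\circ\Z\Delta(f)$, and functoriality of $\Z\Delta(-)$ yields $(fg)^*=g^*f^*$ and $\id^*=\id$. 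Since simplicial operations correspond to non-decreasing functions, this already exhibits $\Z\Delta(-,n)$ as a simplicial set, provided the generating operators are identified correctly.

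The first real step is therefore to check, directly on basis elements, that $\d_i=(\d_i^\vee)^*$ and $\e_i=(\e_i^\vee)^*$. For the face this is immediate: since $\d_i^\vee$ is injective, $\Z\Delta(\d_i^\vee)[a_0,\ldots,a_q]$ is the single basis element $[\mathbf b,\mathbf c']$ that appears in Definition~\ref{D3.2}. For the degeneracy one compares the three clauses of Definition~\ref{D3.2} with the three possibilities for $\Z\Delta(\e_i^\vee)[a_0,\ldots,a_q]$, according to whether the tuple contains neither of $i,i+1$, exactly one of them, or both; in the last case the image is degenerate and hence zero, matching the clause $(\e_i x)[\mathbf b,i,i+1,\mathbf c]=0$. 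Granting this identification, the facts that $\d_i$ and $\e_i$ are group homomorphisms landing in $\Z\Delta$ are automatic, because precomposition by a chain map is additive and sends chain maps to chain maps; and the simplicial identities of Definition~\ref{D3.1} hold formally, being the relations among the $(\d_i^\vee)^*,(\e_i^\vee)^*$ obtained by applying the composition-reversing assignment $f\mapsto f^*$ to the standard relations among the cofaces $\d_i^\vee$ and codegeneracies $\e_i^\vee$.

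It then remains to check that these operations preserve the subset $\Or(m,n)$, so that $\Or(-,n)$ is a sub-simplicial set. Here the key observations are that $\Z\Delta(\d_i^\vee)$ sends each basis element to a single basis element, while $\Z\Delta(\e_i^\vee)$ sends each basis element either to a single basis element or to~$0$. Hence for $x\in\Or(m,n)$ both $x\circ\Z\Delta(\d_i^\vee)$ and $x\circ\Z\Delta(\e_i^\vee)$ carry a basis element to a sum of basis elements, the empty sum~$0$ being permitted by the definition of~$\Or$; and since neither $\d_i^\vee$ nor $\e_i^\vee$ can collapse a degree-$0$ element, we get $\e\circ\Z\Delta(\d_i^\vee)=\e\circ\Z\Delta(\e_i^\vee)=\e$ on basis elements, so that $\e(\d_i x)=(\e x)\circ\Z\Delta(\d_i^\vee)=\e$ and likewise $\e(\e_i x)=\e$. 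Thus $\d_i x$ and $\e_i x$ again lie in~$\Or$.

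I expect the only delicate point to be the case analysis identifying $\e_i$ with $(\e_i^\vee)^*$: one must keep careful track of which of $i$ and $i+1$ occurs in a given tuple and confirm that the vanishing clause corresponds exactly to the production of a degenerate simplex. Everything else is formal once the operations are presented as precomposition.
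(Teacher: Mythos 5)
Your proposal is correct and follows essentially the same route as the paper: the paper introduces the operations precisely as the translations of the standard simplicial operators corresponding to non-decreasing vertex maps, and its proof simply declares the homomorphism property, the simplicial identities, and the preservation of $\Or(m,n)$ to be clear. Your write-up supplies the details behind that "clear" — the identification $\d_i=(\d_i^\vee)^*$, $\e_i=(\e_i^\vee)^*$ and the functoriality of $f\mapsto\Z\Delta(f)$ — and is a valid elaboration of the same argument.
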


\begin{proof}
It is clear that the operations in $\Z\Delta(-,n)$ are
homomorphisms satisfying the simplicial identities. If
$x\in\Or(-,n)$, so that $x$~is augmentation-preserving and
takes basis elements to sums of basis elements, then $\d_i x$
and $\e_i x$ clearly belong to $\Or(-,n)$ as well.
\end{proof}

Degeneracies can be characterised as follows.

\begin{proposition} \label{P3.4}
Let $x$ be a morphism in $\Or(-,n)$. Then $x$~is in the image
of~$\e_i$ if and only if $xa=0$ for every basis element~$a$
including $i$~and $i+1$.
\end{proposition}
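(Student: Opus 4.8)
The plan is to prove the two implications separately. The ``only if'' direction is immediate: if $x=\e_i y$, then the third clause of the degeneracy formula in Definition~\ref{D3.2} says exactly that $\e_i y$ vanishes on every basis element of the form $[\mathbf{b},i,i+1,\mathbf{c}]$, i.e.\ on every basis element including both $i$ and $i+1$. For the ``if'' direction, assume $x\in\Or(m,n)$ satisfies $xa=0$ whenever $a$ includes both $i$ and $i+1$. I would take the candidate preimage to be $y=\d_i x$, which lies in $\Or(m-1,n)$ by Proposition~\ref{P3.3}; this choice is forced by the simplicial identity $\d_i\e_i=\id$, since $x=\e_i y$ would give $\d_i x=y$. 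The whole problem then reduces to verifying the single identity $\e_i\d_i x=x$ on each basis element of $\Z\Delta(m)$.

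The essential point, and the one I expect to be the main obstacle, is that $\e_i\d_i$ is merely an idempotent rather than the identity (only $\d_i\e_i=\id$ holds), so the hypothesis must genuinely be used. The key lemma is that the hypothesis, together with the chain-map condition, forces $x[\mathbf{b},i,\mathbf{c}]=x[\mathbf{b},i+1,\mathbf{c}]$ for all admissible $\mathbf{b},\mathbf{c}$. To prove it, I would apply $x$ to $\d[\mathbf{b},i,i+1,\mathbf{c}]$: since $x$ is a chain map and $x[\mathbf{b},i,i+1,\mathbf{c}]=0$ by hypothesis, this equals $\d\,x[\mathbf{b},i,i+1,\mathbf{c}]=0$. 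In the boundary expansion, every term obtained by deleting an entry of $\mathbf{b}$ or $\mathbf{c}$ still contains both $i$ and $i+1$, so $x$ annihilates it; only the two terms from deleting $i$ and $i+1$ survive, and as these carry opposite signs their images sum to $\pm\bigl(x[\mathbf{b},i+1,\mathbf{c}]-x[\mathbf{b},i,\mathbf{c}]\bigr)$, whose vanishing gives the claim.

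With the key lemma available, the verification of $\e_i\d_i x=x$ splits into cases by how a basis element meets $\{i,i+1\}$. On $[\mathbf{b},\mathbf{c}]$ meeting neither, unwinding the degeneracy and face formulas (subtract $1$, split at $i$, add $1$ back) returns $x[\mathbf{b},\mathbf{c}]$ directly. On $[\mathbf{b},i,\mathbf{c}]$ the same unwinding produces $x[\mathbf{b},i+1,\mathbf{c}]$, which equals $x[\mathbf{b},i,\mathbf{c}]$ by the key lemma; the case $[\mathbf{b},i+1,\mathbf{c}]$ then follows because $\e_i\d_i x$ takes equal values on $[\mathbf{b},i,\mathbf{c}]$ and $[\mathbf{b},i+1,\mathbf{c}]$. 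On $[\mathbf{b},i,i+1,\mathbf{c}]$ both sides vanish, by the degeneracy formula and by hypothesis respectively. Since $\e_i\d_i x$ and $x$ agree on all basis elements they are equal, so $x=\e_i y$ is in the image of~$\e_i$.
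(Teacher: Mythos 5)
Your proof is correct and takes essentially the same approach as the paper: the paper's own argument likewise deduces $x[\mathbf{b},i,\mathbf{c}]=x[\mathbf{b},i+1,\mathbf{c}]$ from the chain-map condition applied to the basis elements $[\mathbf{b},i,i+1,\mathbf{c}]$ and concludes that $x=\e_i\d_i x$. Your write-up merely supplies the sign bookkeeping and the case-by-case verification that the paper leaves implicit.
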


\begin{proof}
By definition, if $x$~is in the image of~$\e_i$ then
$x$~vanishes on every basis element including $i$~and $i+1$.

Conversely, suppose that $x$~vanishes on every basis element
including $i$~and~$i+1$. Since $x$~is a chain map,
$$x[\mathbf{b},i,\mathbf{c}]=x[\mathbf{b},i+1,\mathbf{c}]$$
for all basis elements of the form
$[\mathbf{b},i,i+1,\mathbf{c}]$, and it follows that
$x=\e_i\d_i x$.
\end{proof}

Next we define the wedge operations.

\begin{definition} \label{D3.5}
Let $m$~and~$i$ be integers with $0\leq i\leq m-1$. If
$x$~and~$y$ are chain maps in $\Z\Delta(m,n)$ such that $\d_i
x=\d_{i+1}y$, then the \emph{wedge} $x\w_i y$ is the chain map
in $\Z\Delta(m+1,n)$ given by
$$x\w_i y=\e_{i+1}x-\e_i^2\d_{i+1}y+\e_i y.$$
\end{definition}

One can think of the partial binary operation
$$(x,y)\mapsto \d_{i+1}(x\w_i y)$$ 
as a pasting composition, gluing two $m$-simplexes along a common $(m-1)$-di\-men\-sion\-al face to form a larger $m$-simplex. The wedge $x\w_i y$ itself is a higher-dimensional structure containing this composite. It also contains the factors, because of the following obvious result.

\begin{proposition} \label{P3.6}
If $x\w_i y$ is defined in $\Z\Delta(-,n)$ then
$$\d_i(x\w_i y)=y,\quad \d_{i+2}(x\w_i y)=x.$$
\end{proposition}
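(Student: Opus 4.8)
The plan is to apply the face operations $\d_i$ and $\d_{i+2}$ directly to the defining expression
$$x\w_i y=\e_{i+1}x-\e_i^2\d_{i+1}y+\e_i y$$
and to simplify each of the three resulting terms using the simplicial identities recorded in Definition~\ref{D3.1}. Since every operation involved is a group homomorphism on $\Z\Delta(-,n)$, it suffices to reduce the composites $\d_i\e_{i+1}$, $\d_i\e_i^2$, $\d_i\e_i$ (and the analogous composites with $\d_{i+2}$) to normal form; the two claimed equalities should then drop out after collecting terms.

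For the first equation I would compute the three composites with $\d_i$. From $\d_i\e_j=\e_{j-1}\d_i$ (with $j=i+1$) I get $\d_i\e_{i+1}=\e_i\d_i$; from $\d_i\e_i=\id$ I get $\d_i\e_i^2=\e_i$ and $\d_i\e_i=\id$. Substituting gives
$$\d_i(x\w_i y)=\e_i\d_i x-\e_i\d_{i+1}y+y,$$
and here the defining hypothesis $\d_i x=\d_{i+1}y$ forces the first two terms to cancel, leaving $\d_i(x\w_i y)=y$.

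For the second equation I would compute the composites with $\d_{i+2}$. The identity $\d_{i+1}\e_i=\id$, with $i$ replaced by $i+1$, gives $\d_{i+2}\e_{i+1}=\id$; the identity $\d_i\e_j=\e_j\d_{i-1}$ valid for $i>j+1$ (taking $i\mapsto i+2$, $j\mapsto i$) gives $\d_{i+2}\e_i=\e_i\d_{i+1}$, whence $\d_{i+2}\e_i^2=\e_i\d_{i+1}\e_i=\e_i$. Substituting gives
$$\d_{i+2}(x\w_i y)=x-\e_i\d_{i+1}y+\e_i\d_{i+1}y=x,$$
so that the two correction terms cancel on their own, without any appeal to the hypothesis.

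The computation is entirely routine, and the only thing to watch is the index bookkeeping: each of the five simplicial identities applies only in a specific range, so the main care is in selecting the correct identity (and the correct reindexing of the relation $\d_i\e_i=\d_{i+1}\e_i=\id$) in each case. In particular the boundary case $\d_{i+2}\e_{i+1}=\id$, which comes from the second half $\d_{i+1}\e_i=\id$ of the degeneracy identity rather than from the range $i>j+1$, is the one spot where an index slip is most likely; everything else is forced.
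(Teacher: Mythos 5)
Your proof is correct: both face computations use the right simplicial identities with the right index ranges, and the cancellation in the first case correctly invokes the defining hypothesis $\d_i x=\d_{i+1}y$. The paper states this proposition without proof, calling it obvious, and your routine verification is exactly the computation it leaves implicit, so there is nothing to compare beyond that.
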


We also get the following results.

\begin{proposition} \label{P3.7}
Let $x$~and~$y$ be morphisms in $\Or(m,n)$ such that $\d_i
x=\d_{i+1}y$. Then $x\w_i y$ is a morphism in $\Or(m+1,n)$.
\end{proposition}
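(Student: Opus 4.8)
The plan is to check the two conditions that distinguish $\Or(m+1,n)$ inside $\Z\Delta(m+1,n)$: that $x\w_i y$ is augmentation-preserving, and that it sends each basis element to a \emph{sum} of basis elements rather than to a general integer combination. The augmentation condition is the easy half. Being augmentation-preserving is a linear condition on a chain map, and by Proposition~\ref{P3.3} each of the three summands $\e_{i+1}x$, $\e_i^2\d_{i+1}y$, $\e_i y$ lies in $\Or(m+1,n)$ and so is augmentation-preserving; hence by linearity their signed sum $x\w_i y$ is augmentation-preserving too, the three augmentations combining as $\e-\e+\e=\e$.

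The substance lies in the second condition, which is threatened by the minus sign in $x\w_i y=\e_{i+1}x-\e_i^2\d_{i+1}y+\e_i y$: a priori the subtraction could create negative coefficients. I would evaluate $x\w_i y$ on an arbitrary basis element $[\mathbf{a}]$ of $\Z\Delta(m+1)$ straight from the formulas of Definition~\ref{D3.2}. Writing $\mathbf{a}$ as $[\mathbf{b},\mathbf{s},\mathbf{c}]$, where $\mathbf{b}$ collects the entries $<i$, where $\mathbf{c}$ collects the entries $>i+2$, and where $\mathbf{s}=\mathbf{a}\cap\{i,i+1,i+2\}$, one checks that all three operations fix $\mathbf{b}$ and lower every entry of $\mathbf{c}$ by~$1$ (write $\mathbf{c}^-$ for the result); the whole computation therefore reduces to bookkeeping on the eight possibilities for~$\mathbf{s}$. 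In each case every summand evaluates to a single term $x[\cdots]$ or $y[\cdots]$, or to~$0$ when a degeneracy meets one of its collapsing pairs, so that $(x\w_i y)[\mathbf{a}]$ comes out as an explicit integer combination of such terms.

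The main obstacle is then to see that the minus sign never survives. In the four cases where $\mathbf{s}$ contains at least two of $i,i+1,i+2$ the middle summand $\e_i^2\d_{i+1}y$ collapses to~$0$, leaving a manifestly nonnegative combination; and in the three cases $\mathbf{s}\in\{\emptyset,\{i\},\{i+1\}\}$ the middle summand and $\e_i y$ produce the identical term and cancel, leaving a single $x$-term. The one delicate case is $\mathbf{s}=\{i+2\}$, where the three summands give $x[\mathbf{b},i+1,\mathbf{c}^-]-y[\mathbf{b},i,\mathbf{c}^-]+y[\mathbf{b},i+1,\mathbf{c}^-]$ and no two terms cancel formally. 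Here I would bring in the hypothesis $\d_i x=\d_{i+1}y$: applying the two equal chain maps to the appropriate basis element of $\Z\Delta(m-1)$ yields precisely the relation $x[\mathbf{b},i+1,\mathbf{c}^-]=y[\mathbf{b},i,\mathbf{c}^-]$, which cancels the negative term and leaves $(x\w_i y)[\mathbf{a}]=y[\mathbf{b},i+1,\mathbf{c}^-]$. Thus in every case the value is a nonnegative combination of terms $x[\cdots]$ and $y[\cdots]$; since $x,y\in\Or(m,n)$, each of these is itself a sum of basis elements, and therefore $x\w_i y$ sends basis elements to sums of basis elements. Together with the augmentation computation this shows $x\w_i y\in\Or(m+1,n)$.
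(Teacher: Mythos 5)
Your proof is correct and takes the same approach the paper intends: the paper's own proof merely observes that one must check that $x\w_i y$ is augmentation-preserving and takes basis elements to sums of basis elements, and declares this ``straightforward.'' Your case analysis supplies exactly those omitted details, correctly identifying the one non-trivial point --- the case $\mathbf{s}=\{i+2\}$, where the hypothesis $\d_i x=\d_{i+1}y$ is needed to cancel the negative term coming from $\e_i^2\d_{i+1}y$.
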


\begin{proof}
Given that $x$~and~$y$ are augmentation-preserving and that
they take basis elements to sums of basis elements, we must
show that $x\w_i y$ has the same properties. This is
straightforward.
\end{proof}

\begin{proposition} \label{P3.8}
Let $z$ be a morphism in $\Or(m+1,n)$, and let $i$ be an
integer with $0\leq i\leq m-1$. Then the following are
equivalent.

\textup{(1)} There are morphisms $x,y$ in $\Or(m,n)$ with $\d_i
x=\d_{i+1}y$ such that $z=x\w_i y$.

\textup{(2)} There are chain maps $u,v$ in $\Z\Delta(m,n)$ such
that $z=\e_i u+\e_{i+1}v$.

\textup{(3)} One has $za=0$ for every basis element~$a$
including $i,i+1,i+2$.
\end{proposition}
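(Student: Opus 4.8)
The plan is to prove the cycle of implications (1)$\Rightarrow$(2)$\Rightarrow$(3)$\Rightarrow$(1), with essentially all of the work concentrated in the final step.

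For (1)$\Rightarrow$(2) I would simply rearrange the defining formula for the wedge: in $z=x\w_i y=\e_{i+1}x-\e_i^2\d_{i+1}y+\e_i y$ the last two terms combine as $\e_i(y-\e_i\d_{i+1}y)$, so that $z=\e_i u+\e_{i+1}v$ with $u=y-\e_i\d_{i+1}y$ and $v=x$, both of which are chain maps in $\Z\Delta(m,n)$. For (2)$\Rightarrow$(3) I would use the vanishing clauses of Definition~\ref{D3.2}: a basis element containing $i$ and $i+1$ has the form $[\mathbf{b},i,i+1,\mathbf{c}]$, on which $\e_i u$ vanishes, and a basis element containing $i+1$ and $i+2$ has the form $[\mathbf{b},i+1,i+2,\mathbf{c}]$, on which $\e_{i+1}v$ vanishes; a basis element containing all of $i,i+1,i+2$ is killed by both summands, hence by $z$.

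For (3)$\Rightarrow$(1), Proposition~\ref{P3.6} dictates the only possible choice of factors, so I would set $y=\d_i z$ and $x=\d_{i+2}z$, which lie in $\Or(m,n)$. The simplicial identity $\d_i\d_{i+2}=\d_{i+1}\d_i$ then gives $\d_i x=\d_{i+1}y$, so that $w:=x\w_i y$ is defined and lies in $\Or(m+1,n)$ by Proposition~\ref{P3.7}; it remains only to show $z=w$. I would compare $z$ and $w$ on each basis element $a$ of $\Z\Delta(m+1)$, according to which of $i,i+1,i+2$ appear in $a$. From Definition~\ref{D3.2}, $\d_i z$ determines $z$ on basis elements not containing $i$, and $\d_{i+2}z$ determines $z$ on those not containing $i+2$; since $\d_i w=y=\d_i z$ and $\d_{i+2}w=x=\d_{i+2}z$ by Proposition~\ref{P3.6}, $z$ and $w$ agree on every basis element missing $i$ or missing $i+2$. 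On a basis element containing all of $i,i+1,i+2$ both vanish: $z$ by hypothesis (3), and $w$ because it is of the form (1) and hence satisfies (3) by the implications already proved.

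The crux is the remaining case of a basis element $a=[\mathbf{b},i,i+2,\mathbf{d}]$ containing $i$ and $i+2$ but not $i+1$, where neither a single face operation nor the vanishing hypothesis applies directly. Here I would exploit that $z$ is a chain map. Inserting $i+1$ produces $a^+=[\mathbf{b},i,i+1,i+2,\mathbf{d}]$, on which $z$ vanishes by (3); applying $\d z=z\d$ to $a^+$ kills every face that still contains all three of $i,i+1,i+2$, leaving only the faces obtained by deleting $i$, $i+1$, or $i+2$. This yields a relation expressing $z(a)$ as a signed combination of $z[\mathbf{b},i+1,i+2,\mathbf{d}]$ (which misses $i$) and $z[\mathbf{b},i,i+1,\mathbf{d}]$ (which misses $i+2$), both already known to agree with the corresponding values of $w$; since $w$ is a chain map satisfying the identical relation, $z(a)=w(a)$. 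This settles the last case and gives $z=w=x\w_i y$, closing the cycle. I expect this mixed case to be the main obstacle, since it is the one place where the value of $z$ must be recovered indirectly, from the boundary of the higher-dimensional element $a^+$ rather than from the faces or the vanishing axiom.
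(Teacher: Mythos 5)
Your proof is correct and follows essentially the same route as the paper: the same cycle $(1)\Rightarrow(2)\Rightarrow(3)\Rightarrow(1)$, the same choice $x=\d_{i+2}z$, $y=\d_i z$, the same case analysis on which of $i,i+1,i+2$ a basis element contains, and the same chain-map argument (applying $\d z=z\d$ to $[\mathbf{b},i,i+1,i+2,\mathbf{d}]$) to handle basis elements containing $i$ and $i+2$ but not $i+1$. The only difference is that you spell out details the paper leaves implicit, such as the explicit witnesses $u=y-\e_i\d_{i+1}y$, $v=x$ in step $(1)\Rightarrow(2)$ and the signed boundary relation in the mixed case.
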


\begin{proof}
We show that $(1)\Rightarrow (2)\Rightarrow (3)\Rightarrow
(1)$.

If $z=x\w_i y$ then $z$~has the form $\e_i u+\e_{i+1}v$ by
definition.

If $z=\e_i u+\e_{i+1}v$ then clearly $za=0$ for every basis
element~$a$ including $i,i+1,i+2$.

Suppose that $za=0$ for every basis element~$a$ including
$i,i+1,i+2$. Let
$$x=\d_{i+2}z,\quad y=\d_i z.$$
It follows from Proposition~\ref{P3.3} that $x$~and~$y$ are
morphisms in $\Or(m,n)$ and that $\d_i x=\d_{i+1}y$; the wedge
$x\w_i y$ therefore exists. The morphisms $z$~and $x\w_i y$
then agree on basis elements not including~$i$, on basis
elements not including $i+2$, and on basis elements including
$i,i+1,i+2$. Since $z$~and $x\w_i y$ are chain maps, they must
also agree on basis elements including $i$~and $i+2$ but not
$i+1$. Therefore $z=x\w_i y$.

This completes the proof.
\end{proof}

\section{Canonical forms} \label{S4}

Let $\iota_n$ be the identity morphism in $\Or(n,n)$. In this
section we show that the elements of $\Or(-,n)$ can be
expressed in terms of~$\iota_n$ by using the face, degeneracy
and wedge operations. In effect we find canonical forms for the
morphisms in $\Or(-,n)$ (see Theorem~\ref{T4.11}). The argument
is based on the following result.

\begin{theorem} \label{T4.1}
Let $x$ be a morphism in $\Or(m,n)$. If $[i]$~is a zero-dimensional basis element in $\Z\Delta(m)$, then $x[i]$ is a zero-dimensional basis element $[x(i)]$ in $\Z\Delta(n)$. The integers $x(0),\ldots,x(m)$ are such that
$$0\leq x(0)\leq x(1)\leq\ldots\leq x(m)\leq n.$$
If $a=[a_0,\ldots,a_q]$ is a basis element in $\Z\Delta(m)$, then $xa$ is a sum of basis elements
$[b_0,\ldots,b_q]$ with 
$$x(a_0)\leq b_0<b_1<\ldots<b_q\leq x(a_q).$$
\end{theorem}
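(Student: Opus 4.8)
The plan is to establish the three assertions in turn, the crucial tool being a positivity lemma describing how the boundary $\d$ interacts with the extreme vertices of a chain that is a non-negative sum of basis elements. The zero-dimensional claim itself is immediate from augmentation: since $x$ preserves degree and carries basis elements to sums of basis elements, $x[i]$ is a sum $[c_1]+\ldots+[c_k]$ of zero-dimensional basis elements with $k\geq 0$, and applying $\e$ together with $\e x=\e$ gives $k=\e[i]=1$. Thus $x[i]=[x(i)]$ for a single integer $x(i)$, with $0\leq x(i)\leq n$.

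The key lemma I would isolate is the following. If $c\neq 0$ is a sum, with non-negative coefficients, of basis elements of a fixed degree $q\geq 1$ in $\Z\Delta(n)$, and $B$ is the largest integer occurring as the top vertex of a basis element of $c$, then $\d c$ contains a basis element with top vertex $B$ with non-zero coefficient. I would prove this by induction on $q$. For $q=1$ one checks directly that the coefficient of $[B]$ in $\d c$ is the number of edges of $c$ with top vertex $B$, which is positive since no edge can have $B$ as its bottom vertex. For $q\geq 2$, a face of a basis element has top vertex $B$ only when it is obtained by deleting a non-top vertex from a basis element of $c$ whose top vertex is already $B$; collecting these, the top-$B$ part of $\d c$ is the image of $\d\bar c$ under the injective operation of adjoining the vertex $B$, where $\bar c\neq 0$ is the associated non-negative chain of degree $q-1$ obtained by discarding the vertices $B$, and $\d\bar c\neq 0$ by the inductive hypothesis. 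There is an evident mirror-image statement for the smallest bottom vertex. This lemma is where the non-negativity of the coefficients, the defining feature of $\Or(m,n)$ inside $\Z\Delta(m,n)$, is essential, and I expect it to be the main obstacle, since for a general chain map the extreme vertex could cancel.

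Monotonicity then follows by applying the lemma in degree $1$. As $x$ is a chain map, $\d\,x[i,i+1]=x\,\d[i,i+1]=[x(i+1)]-[x(i)]$. If $x[i,i+1]=0$ then $x(i)=x(i+1)$; otherwise the lemma and its mirror image force the largest top vertex and the smallest bottom vertex of the non-negative chain $x[i,i+1]$ to be $x(i+1)$ and $x(i)$ respectively, and since each edge has its bottom vertex strictly below its top vertex these satisfy $x(i)<x(i+1)$. In either case $x(i)\leq x(i+1)$, whence $x(a_0)\leq\ldots\leq x(a_q)$.

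Finally I would prove the range statement by induction on $q$, the case $q=0$ being the zero-dimensional claim. Let $a=[a_0,\ldots,a_q]$ with $q\geq 1$, and suppose for contradiction that some basis element of $xa$ had top vertex $B>x(a_q)$, with $B$ chosen as large as possible. By the lemma, $\d(xa)$ would contain a basis element with top vertex $B$. But $\d(xa)=x(\d a)$ is an alternating sum of the chains $x[a_0,\ldots,\hat a_i,\ldots,a_q]$, and each of these is a sum of basis elements whose top vertex is at most $x(a_q)$: for $i<q$ the omitted face has top vertex $a_q$, so the inductive hypothesis applies directly, while for $i=q$ the face has top vertex $a_{q-1}$ and one invokes monotonicity, $x(a_{q-1})\leq x(a_q)$. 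Hence no term of $\d(xa)$ has top vertex exceeding $x(a_q)$, contradicting the existence of such a term with top vertex $B$. Therefore every basis element of $xa$ has top vertex $\leq x(a_q)$; the mirror-image argument gives bottom vertex $\geq x(a_0)$, which completes the proof.
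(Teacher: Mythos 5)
Your proof is correct, and it shares the paper's overall skeleton: the augmentation argument in degree zero, the identity $\d\, x[i,i+1]=x\,\d[i,i+1]=[x(i+1)]-[x(i)]$ for monotonicity, and an induction on $q$ that exploits $\d (xa)=x(\d a)$ together with the fact that non-negativity of coefficients prevents cancellation at extreme vertices. Where you genuinely diverge is in how the non-cancellation step is organised. You isolate a standalone positivity lemma --- in a non-zero chain with non-negative coefficients, the largest top vertex $B$ survives in the boundary --- proved by its own induction on degree via the injective operation of adjoining $B$; the range statement then follows by contradiction, since the inductive hypothesis bounds the top vertices of every term of $x\d a$ by $x(a_q)$, using monotonicity to handle the face $[a_0,\ldots,a_{q-1}]$ in which $a_q$ is deleted. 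The paper has no such lemma: it makes a double-extremal choice, picking a term $[k_0,k_1,\ldots,k_q]$ of $xa$ with $k_0$ minimal and, subject to that, $k_1$ maximal, so that $[k_0,k_2,\ldots,k_q]$ has strictly negative coefficient in $\d(xa)$ (insertions at position $0$ are blocked by minimality of $k_0$, insertions at positions $\geq 2$ by maximality of $k_1$, and the surviving insertions at position $1$ all carry sign $-1$). Since each $x[a_0,\ldots,a_{i-1},a_{i+1},\ldots,a_q]$ is a non-negative sum of basis elements, a negative coefficient can only come from a face with odd $i$; as odd $i\geq 1$ retains $a_0$, the inductive hypothesis applies immediately. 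This sign-parity trick is what lets the paper avoid both the auxiliary lemma and any appeal to monotonicity inside the induction, giving a shorter argument with a single induction; your route costs a nested induction but makes the cancellation mechanism explicit in a reusable form that treats monotonicity and both halves of the range statement uniformly.
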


\begin{proof}
Let $[i]$ be a zero-dimensional basis element in $\Z\Delta(m)$.
Then $x[i]$ is a sum of zero-dimensional basis elements in
$\Z\Delta(n)$, and this sum has exactly one term because $x$~is
augmentation-preserving. Therefore $x[i]=[x(i)]$ for some
integer $x(i)$ with $0\leq x(i)\leq n$.

For $0<i\leq m$ we have
$$\d x[i-1,i]=x\d[i-1,i]=x([i]-[i-1])=[x(i)]-[x(i-1)].$$
But $x[i-1,i]$ is a sum of basis elements $[j,k]$, so $\d
x[i-1,i]$ is a sum of expressions $[k]-[j]$ with $j<k$.
Therefore $x(i-1)\leq x(i)$.

Now let $a=[a_0,\ldots,a_q]$ be a basis element for $\Z\Delta(m)$. We
will show by induction on~$q$ that $xa$ is a sum of basis
elements $[b_0,\ldots,b_q]$ with $x(a_0)\leq b_0$. The result is
clear when $q=0$, and is trivial when $xa=0$. From now on,
suppose that $q>0$ and $xa\neq 0$, so that $xa$ is a non-empty
sum of basis elements of positive dimension. Let
$[k_0,\ldots,k_q]$ be a term in this sum such that $k_0$~is as
small as possible and, subject to this condition, such that
$k_1$~is as large as possible. Then the basis element
$[k_0,k_2,\ldots,k_q]$ has a negative coefficient in $\d xa$
because there is no possibility of cancellation. Since $\d
xa=x\d a$, it follows that $[k_0,k_2,\ldots,k_q]$ is a term in
$x[a_0,a_1,\ldots,a_{i-1},a_{i+1},\ldots,a_q]$ for some odd value
of~$i$, and it then follows from the inductive hypothesis that
$x(a_0)\leq k_0$. Since $k_0$~is minimal, $xa$ is a sum of basis
elements $[b_0,\ldots,b_q]$ with $x(a_0)\leq b_0$.

A similar argument shows that $xa$ is a sum of basis elements
$[b_0,\ldots,b_q]$ with $b_q\leq x(a_q)$.

This completes the proof.
\end{proof}

Let $x$ be a morphism in $\Or(m,n)$, let $x[m]=[t]$, and let $r$ be minimal such that $x[r]=[t]$. The images of the zero-dimensional basis elements $[0],\ldots,[m]$ in $\Z\Delta(m)$ can then be listed in the form
$$[x(0)],\ \ldots,\ [x(r-1)],\ [t],\ \ldots,\ [t],$$
with $0\leq r\leq m$ and with $x(0)\leq\ldots\leq x(r-1)<t$. The canonical form for~$x$ depends on the parameters $t$, $r$ and $m-r$, for which we will use the following terminology.

\begin{definition} \label{D4.2}
Let $x$ be a morphism in $\Or(m,n)$. Then the \emph{terminus}
of~$x$, denoted $\terminus x$, is the integer~$t$ such that
$x[m]=[t]$; the \emph{rank} of~$x$, denoted $\rank x$, is the
number of zero-dimensional basis elements~$[a]$ in
$\Z\Delta(m)$ such that $x[a]\neq x[m]$; the \emph{corank}
of~$x$, denoted $\corank x$, is the number of zero-dimensional
basis elements~$[a]$ in $\Z\Delta(m)$ such that $a<m$ and
$x[a]=x[m]$.
\end{definition}

Thus the terminus of a morphism is a measure of the size of its image, the rank is a measure of its non-degeneracy, and the corank is a measure of its degeneracy.

Using Theorem~\ref{T4.1}, we immediately draw the following
conclusions.

\begin{proposition} \label{P4.3}
Let $x$ be a morphism in $\Or(m,n)$ with terminus~$t$, rank~$r$
and corank~$s$. Then $t$, $r$ and~$s$ are nonnegative integers
such that $t\leq n$ and $r+s=m$.
\end{proposition}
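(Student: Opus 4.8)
The plan is to read off all three conclusions directly from Theorem~\ref{T4.1}, which has already done the essential work. That theorem tells us that each zero-dimensional basis element $[i]$ is sent to a zero-dimensional basis element $[x(i)]$, and that the resulting integers satisfy
$$0\leq x(0)\leq x(1)\leq\ldots\leq x(m)\leq n.$$
Since $t=\terminus x=x(m)$ is the largest of these integers and $0\leq x(m)\leq n$, it follows at once that $t$ is a nonnegative integer with $t\leq n$. The rank and corank are by definition cardinalities of certain subsets, so they are nonnegative integers with no further argument required; the only substantive point is the identity $r+s=m$.

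For that identity I would partition the collection of zero-dimensional basis elements $[0],[1],\ldots,[m]$, of which there are exactly $m+1$, according to the value taken by $x(a)$. Because the integers $x(a)$ are non-decreasing with maximum value $x(m)=t$, the condition $x[a]\neq x[m]$ is equivalent to $x(a)<t$, and every index $a$ satisfying it automatically has $a<m$. Thus the rank~$r$ counts those $a$ with $x(a)<t$, while the corank~$s$ counts those $a$ with $a<m$ and $x(a)=t$. These two index sets are disjoint, and together with the single remaining index $a=m$---which belongs to neither, since $x(m)=t$ rules it out of the rank count and $m<m$ fails so it is excluded from the corank count---they exhaust $\{0,1,\ldots,m\}$. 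Hence $r+s+1=m+1$, giving $r+s=m$.

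There is no genuine obstacle here, as the proposition is deliberately framed as an immediate corollary of Theorem~\ref{T4.1}; the one point demanding a moment's care is the bookkeeping in this partition, namely verifying that the top index $a=m$ is excluded from \emph{both} the rank and the corank counts, so that the two counts sum to $m$ rather than to $m+1$.
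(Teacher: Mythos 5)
Your proof is correct and follows the paper's approach exactly: the paper gives no separate argument, stating only that the conclusions follow immediately from Theorem~\ref{T4.1}, which is precisely what you carry out. Your explicit bookkeeping---that the index $a=m$ is excluded from both the rank and corank counts, so the three disjoint classes partition $\{0,1,\ldots,m\}$ and give $r+s+1=m+1$---is the detail the paper leaves to the reader, and you have it right.
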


We will find the canonical forms by an induction on terminus; to be more precise, for $0\leq t\leq n$ we will express the morphisms in $\Or(-,n)$ with terminus at most~$t$ in in terms of the morphism 
$\d_{t+1}^{n-t}\iota_n$.
The set of morphisms of this kind is obviously isomorphic to $\Or(-,t)$, with $\d_{t+1}^{n-t}\iota_n$ corresponding to~$\iota_t$, so the induction is equivalent to an induction on~$n$, but it is easier to work inside the single graded set $\Or(-,n)$.

The passage to morphisms of terminus~$t$ from morphisms of
lower terminus is based on a cone construction~$\rho_t$: if $x$~is a morphism in $\Or(m,n)$ with $x[m]=[t']$ such that $t'<t$, then $\rho_t x$ is the obvious extension of~$x$ to a morphism in $\Or(m+1,n)$ with $(\rho_t x)[m+1]=[t]$. The precise definition is as follows.

\begin{definition} \label{D4.4}
Let $x$ be a morphism in $\Or(m,n)$ with terminus less
than~$t$. Then the \emph{$t$-cone} on~$x$ is the morphism
$\rho_t x$ in $\Or(m+1,n)$ given on basis elements as follows:
$(\rho_t x)[m+1]=[t]$; if
$$x[\mathbf{a}]=\sum_{\mathbf{b}}x_{\mathbf{a}\mathbf{b}}[\mathbf{b}]$$
then
$$(\rho_t x)[\mathbf{a}]=\sum_{\mathbf{b}}x_{\mathbf{a}\mathbf{b}}[\mathbf{b}],\quad
 (\rho_t x)[\mathbf{a},m+1]=\sum_{\mathbf{b}}x_{\mathbf{a}\mathbf{b}}[\mathbf{b},t].$$
\end{definition}

It is easy to see that $\rho_t x$ as defined here really is a
morphism in $\Or(m+1,n)$, with terminus~$t$, rank~$m$ and corank zero. It
is also easy to verify the following result.

\begin{proposition} \label{P4.5}
For $t>0$, let $S_{t-1}$ be the subset of $\Or(-,n)$ consisting of the
morphisms with terminus less than~$t$. Then $S_{t-1}$~is closed
under the face, degeneracy and wedge operations. The $t$-cone
construction
$$\rho_t\colon S_{t-1}\to\Or(-,n)$$
commutes with the operations in~$S_{t-1}$, and
$$\rho_t\d_t^{n-t+1}\iota_n=\d_{t+1}^{n-t}\iota_n.$$
\end{proposition}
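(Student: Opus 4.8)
The plan is to exploit the fact that each simplicial operation on $\Z\Delta(-,n)$ is precomposition with the chain map induced by the corresponding monotone function $\d_i^\vee$ or $\e_i^\vee$; that is, $\d_i x=x\circ\delta_i$ and $\e_i x=x\circ\sigma_i$ for suitable coface and codegeneracy maps $\delta_i,\sigma_i$ sending each basis element to a single basis element (or, for $\sigma_i$, possibly to $0$). Hence every basis element occurring in $\d_i x$ or $\e_i x$ already occurs in~$x$, and from the wedge formula of Definition~\ref{D3.5} every basis element occurring in $x\w_i y$ occurs in~$x$ or in~$y$. By Theorem~\ref{T4.1} the terminus of a morphism is the largest final entry among the basis elements in its image, so none of these operations can increase the terminus; this establishes the closure of $S_{t-1}$ asserted in the first part.

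For the commutation statement I would first treat faces and degeneracies by a direct computation on basis elements. The key observation is that the apex $m+1$ introduced by~$\rho_t$ is the top vertex, while $\d_i^\vee$ and $\e_i^\vee$ for the indices belonging to~$x$ send the top vertex to the top vertex. Splitting the basis elements of the source into those containing the apex and those not, one checks in each case that $\d_i\rho_t x=\rho_t\d_i x$ and $\e_i\rho_t x=\rho_t\e_i x$: on basis elements avoiding the apex both sides reduce to the relevant value of~$x$, and on basis elements containing the apex both sides reduce to the $t$-cone of that value (Definition~\ref{D4.4}). This is routine.

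The main obstacle is the wedge, since $x\w_i y$ is a \emph{signed} combination of degeneracies and $\rho_t$ is a priori defined only on the nonnegative morphisms of $\Or(-,n)$, so one cannot simply cone the defining formula termwise. I would circumvent this using the intrinsic characterisation of wedges in Proposition~\ref{P3.8}. First, the face commutation just proved, applied to $\d_i x=\d_{i+1}y$, shows $\d_i\rho_t x=\d_{i+1}\rho_t y$, so $(\rho_t x)\w_i(\rho_t y)$ is defined. Next, since $x\w_i y$ vanishes on every basis element containing $i,i+1,i+2$ by Proposition~\ref{P3.8}, and $\rho_t$ builds its values from those of $x\w_i y$, the cone $\rho_t(x\w_i y)$ vanishes on the same basis elements; here one uses that the apex exceeds $i+2$. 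By Proposition~\ref{P3.8} it follows that $\rho_t(x\w_i y)$ is itself a wedge, namely $\bigl(\d_{i+2}\rho_t(x\w_i y)\bigr)\w_i\bigl(\d_i\rho_t(x\w_i y)\bigr)$; and by Proposition~\ref{P3.6} together with the face commutation these two faces are $\rho_t\d_{i+2}(x\w_i y)=\rho_t x$ and $\rho_t\d_i(x\w_i y)=\rho_t y$. Hence $\rho_t(x\w_i y)=(\rho_t x)\w_i(\rho_t y)$. (Alternatively one may extend $\rho_t$ to a $\Z$-linear map on the group of chain maps with terminus less than~$t$ and cone the formula directly, the two face commutations then giving the identity formally.)

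Finally, for the displayed identity I would identify both sides with the chain map induced by the standard inclusion $\{0,\ldots,t\}\hookrightarrow\{0,\ldots,n\}$. Writing the faces as precomposition with cofaces, $\d_t^{n-t+1}\iota_n$ is the inclusion of $\{0,\ldots,t-1\}$, since the successive cofaces delete $t,t+1,\ldots,n$; it has terminus $t-1<t$, and applying~$\rho_t$ appends the vertex~$t$ to yield the inclusion of $\{0,\ldots,t\}$. Likewise $\d_{t+1}^{n-t}\iota_n$ deletes $t+1,\ldots,n$ and is the inclusion of $\{0,\ldots,t\}$. The two agree, which proves the formula.
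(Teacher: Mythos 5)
Your proof is correct, and there is little to compare it against: the paper offers no argument for Proposition~\ref{P4.5} at all, merely asserting before the statement that it is ``easy to verify,'' so your write-up supplies a proof the paper leaves implicit. Your closure argument is sound, since by Theorem~\ref{T4.1} the terminus is the largest final entry of any basis element in the image, and faces, degeneracies and wedges produce images whose basis elements all already occur in the images of the inputs. The face and degeneracy commutations are indeed routine case-checks on basis elements containing or avoiding the apex. The one point where a naive verification would stumble --- coning the signed formula $x\w_i y=\e_{i+1}x-\e_i^2\d_{i+1}y+\e_i y$ termwise, when $\rho_t$ is defined only on morphisms of $\Or(-,n)$ --- is exactly where you do something substantive: you use Proposition~\ref{P3.8} to recognise $\rho_t(x\w_i y)$ as a wedge from its vanishing on basis elements containing $i,i+1,i+2$ (valid because the apex exceeds $i+2$), and then Proposition~\ref{P3.6} together with the already-established face commutation to identify its factors as $\rho_t x$ and $\rho_t y$. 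This wedge-recognition device is in the same spirit as the paper's own later arguments (for instance in the proof of Proposition~\ref{P6.6}). Your parenthetical alternative --- extending $\rho_t$ $\Z$-linearly to the group of chain maps whose image basis elements have final entries less than~$t$, which contains $S_{t-1}$ by Theorem~\ref{T4.1}, and coning the wedge formula directly --- also works, and is probably the ``easy'' verification the author had in mind. The concluding identification of both $\rho_t\d_t^{n-t+1}\iota_n$ and $\d_{t+1}^{n-t}\iota_n$ with the chain map induced by the inclusion $\{0,\ldots,t\}\hookrightarrow\{0,\ldots,n\}$ is likewise correct.
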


The canonical form for a morphism~$x$ of rank~$r$ will be expressed in terms of simpler morphisms
$$\alpha_{r-1}x,\ldots,\alpha_0 x,\gamma x.$$
We will now describe these simpler morphisms, beginning with $\gamma x$.

Let $x$ be a morphism in $\Or(m,n)$ with terminus~$t$ and
rank~$r$; then $\gamma x$ is the morphism in $\Or(r,n)$ constructed as follows. For each basis element of the form $[\mathbf{a},r]$ with $[\mathbf{a}]$ a basis element in $\Z\Delta(r-1)$, we can write $x[\mathbf{a},r]$ in the form
$$x[\mathbf{a},r]
 =\sum_{\mathbf{b}}(x'_{\mathbf{a}\mathbf{b}}[\mathbf{b}]
 +x''_{\mathbf{a}\mathbf{b}}[\mathbf{b},t])$$
with the sum running over basis elements~$[\mathbf{b}]$ for
$\Z\Delta(t-1)$; then
$$\gamma x\colon\Z\Delta(r)\to\Z\Delta(n)$$
is to be the group homomorphism given on basis elements by $(\gamma x)[r]=[t]$ and by
$$(\gamma x)[\mathbf{a}]=\sum_{\mathbf{b}}x''_{\mathbf{a}\mathbf{b}}[\mathbf{b}],\quad
 (\gamma x)[\mathbf{a},r]=\sum_{\mathbf{b}}x''_{\mathbf{a}\mathbf{b}}[\mathbf{b},t].$$
Essentially, $\gamma x$ is obtained by selecting the terms of the form $[\mathbf{b},t]$ in the images $x[\mathbf{a},r]$.

We find that $\gamma x$ has the following properties.

\begin{proposition} \label{P4.6}
Let $x$ be a morphism in $\Or(-,n)$ with terminus~$t$ and
rank~$r$. Then $\gamma x$ is a morphism in $\Or(r,n)$ with
terminus~$t$, rank~$r$ and corank zero. If $r=0$ then $\gamma
x=\d_0^t\d_{t+1}^{n-t}\iota_n$\textup{;} if $r>0$ then $\gamma
x$ is a $t$-cone.
\end{proposition}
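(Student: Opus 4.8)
The plan is to exploit the splitting of each image $x[\mathbf{a},r]$ into the part with all entries less than~$t$ and the part ending in~$t$. By Theorem~\ref{T4.1}, every term $[b_0,\ldots,b_q]$ of $x[\mathbf{a},r]$ satisfies $b_q\le x(r)=t$, so this splitting is exhaustive, and the definition of $\gamma x$ simply records the $t$-ending part: $(\gamma x)[\mathbf{a},r]=\sum_{\mathbf{b}}x''_{\mathbf{a}\mathbf{b}}[\mathbf{b},t]$, while $(\gamma x)[\mathbf{a}]$ is the same sum with~$t$ deleted. Since $x$ takes basis elements to sums of distinct basis elements, each coefficient $x''_{\mathbf{a}\mathbf{b}}$ is~$0$ or~$1$, so $\gamma x$ takes basis elements to sums of basis elements; and comparing degrees shows $\gamma x$ is degree-preserving. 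Thus the only substantial parts of the assertion $\gamma x\in\Or(r,n)$ are the chain-map and augmentation conditions.

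For the chain-map condition I would apply~$\d$ to the identity $x[\mathbf{a},r]=(\text{non-}t\text{ part})+(t\text{ part})$ and use $\d x=x\d$. Writing $\d[\mathbf{a},r]=\d'[\mathbf{a},r]\pm[\mathbf{a}]$, where $\d'$ keeps the final~$r$, the image $x\d'[\mathbf{a},r]$ is an alternating sum of the $x[\mathbf{a}^{(i)},r]$, whose $t$-ending parts are the $(\gamma x)[\mathbf{a}^{(i)},r]$. The crucial observation is that $x[\mathbf{a}]$ contributes no $t$-ending terms: the entries of~$\mathbf{a}$ are at most $r-1$, so by Theorem~\ref{T4.1} the top vertex of any term of $x[\mathbf{a}]$ is at most $x(r-1)<t$. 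Hence extracting the $t$-ending component of $\d x[\mathbf{a},r]=x\d[\mathbf{a},r]$ yields precisely $\d'\bigl((\gamma x)[\mathbf{a},r]\bigr)=\sum_i(-1)^i(\gamma x)[\mathbf{a}^{(i)},r]$, which is the nontrivial part of the chain-map identity for $\gamma x$ on $[\mathbf{a},r]$ (the remaining, non-$t$, part holding by the definition of $\gamma x$); deleting~$t$ from this identity gives the chain-map identity on $[\mathbf{a}]$.

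Augmentation-preservation then comes almost for free: $\gamma x$ is already a chain map taking basis elements to sums of basis elements, and $(\gamma x)[r]=[t]$ is a single vertex, so applying $\e$ to $\d(\gamma x)[j,j+1]=(\gamma x)[j+1]-(\gamma x)[j]$ and using $\e\d=0$ propagates $\e(\gamma x)[j]=1$ downwards from $j=r$ to every vertex. This simultaneously identifies $\terminus(\gamma x)=t$ and shows that each $(\gamma x)[j]$ for $j<r$ is a single vertex with entries $<t$, hence $\neq[t]$; therefore $\rank(\gamma x)=r$ and $\corank(\gamma x)=0$.

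It remains to identify $\gamma x$. When $r=0$ the morphism is the vertex $[0]\mapsto[t]$, and a direct computation with Definition~\ref{D3.2} shows $\d_0^t\d_{t+1}^{n-t}\iota_n$ is exactly this vertex inclusion. When $r>0$, set $w=\d_r(\gamma x)\in\Or(r-1,n)$; since every basis element of $\Z\Delta(r-1)$ has entries $<r$, the face formula gives $w[\mathbf{a}]=(\gamma x)[\mathbf{a}]$, and $w$ has terminus less than~$t$, so the $t$-cone $\rho_t w$ is defined. Comparing Definition~\ref{D4.4} term by term with the formula for $\gamma x$ then shows $\rho_t w=\gamma x$. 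I expect the chain-map verification of the second paragraph to be the main obstacle, and more precisely its one non-formal ingredient---the use of Theorem~\ref{T4.1} to rule out $t$-ending contributions from $x[\mathbf{a}]$---to be the real crux; everything else is either bookkeeping with signs or a direct appeal to earlier closure results such as Proposition~\ref{P3.3}.
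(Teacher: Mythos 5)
Your proposal is correct and takes essentially the same approach as the paper: the paper's own (very terse) proof rests on exactly your crux, namely that Theorem~\ref{T4.1} forces every term of $xa$ to have top vertex below~$t$ whenever the entries of~$a$ are below~$r$, and deduces from this that $\gamma x$ is a chain map. Your write-up merely supplies the sign bookkeeping, the augmentation/rank/corank checks, and the identification with $\d_0^t\d_{t+1}^{n-t}\iota_n$ or with a $t$-cone that the paper compresses into ``the rest follows easily.''
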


\begin{proof}
If $a=[a_0,\ldots,a_q]$ is a basis element for $\Z\Delta(m)$ with $a_q<r$, then it follows from Theorem~\ref{T4.1} that $xa$ is a sum of basis elements $[b_0,\ldots,b_q]$ with $b_q<t$. From this one can show that $\gamma x$ is a chain map. The rest follows easily.
\end{proof}

Now let $x$ be a morphism in $\Or(r+s,n)$ with terminus~$t$, rank~$r$ and corank~$s$, so that $\gamma x\in\Or(r,n)$. It is convenient to define a morphism $\beta x\in\Or(r+s,n)$ by the formula
$$\beta x=\e_r^s\gamma x,$$
so that $\beta x$ has the same domain as~$x$ and accounts for the same terms as $\gamma x$. For $0\leq p<r$ we then define chain maps
$$\alpha_p x\colon\Z\Delta(p+s+1)\to\Z\Delta(n)$$
by the formula
$$\alpha_p x
 =\d_{p+1}^{r-p-1}(x-\beta x)
 +\e_p\d_{p+1}^{r-p}\beta x.$$
Essentially, $\alpha_p x$ accounts for the terms $[b_0,\ldots,b_q]$ with $b_q<t$ in the images $x[a_0,\ldots,a_{q-1},r]$ for $a_{q-1}\leq p$, but in practice it is easier to work with $\alpha_p x$ by using algebraic formulae. There is one particularly simple case, as follows.

\begin{example} \label{E4.7}
Let $x$ be a morphism in $\Or(r,n)$ which is equal to
$\d_0^t\d_{t+1}^{n-t}\iota_n$ or is a $t$-cone. Then $\rank
x=r$, $\corank x=0$, $\beta x=\gamma x=x$, and
$$\alpha_p x=\e_p\d_{p+1}^{r-p}x\quad (0\leq p<r).$$
\end{example}

In general we get the following result.

\begin{proposition} \label{P4.8}
Let $x$ be a morphism in $\Or(-,n)$ with terminus~$t$, rank~$r$
and corank~$s$. If $s=0$ and $0\leq p<r$ then $\alpha_p x$ is a
morphism in $\Or(p+s+1,n)$ with terminus less than~$t$. If
$s>0$ and $0\leq p<r$ then $\alpha_p x$ is a morphism in
$\Or(p+s+1,n)$ with terminus~$t$, rank $p+2$ and corank $s-1$.
\end{proposition}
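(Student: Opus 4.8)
The plan is to work throughout on basis elements, using the explicit formulas of Definition~\ref{D3.2} for the face and degeneracy operations. Since $\d_i$ and $\e_i$ carry chain maps to chain maps and $\beta x=\e_r^s\gamma x$ is a chain map, the expression $\alpha_p x=\d_{p+1}^{r-p-1}(x-\beta x)+\e_p\d_{p+1}^{r-p}\beta x$ is automatically a chain map from $\Z\Delta(p+s+1)$ to $\Z\Delta(n)$; the content of the proposition is that it lies in the subset $\Or(p+s+1,n)$ and that its terminus, rank and corank take the stated values. I would proceed in two stages: first compute $\alpha_p x$ on zero-dimensional basis elements, which pins down the terminus at once and, once membership in $\Or$ is known, the rank and corank by counting; then verify membership in $\Or$ by a case analysis on a general basis element.

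For the vertex computation I would record that $\d_{p+1}^{k}$ is the reindexing that adds $k$ to every entry $\geq p+1$ (it corresponds to the coface omitting $p+1,\ldots,p+k$), and that $\e_p$ collapses $p+1$ onto $p$. Chasing $[j]$ through both summands, the two reindexings agree on the vertices $j\geq p+2$, and one finds $(\alpha_p x)[j]=x[j]$ for $j\leq p$, that $(\alpha_p x)[p+1]=(\gamma x)[p]$, and that $(\alpha_p x)[j]=x[j+r-p-1]=[t]$ for $p+2\leq j\leq p+s+1$, the last because these entries land in $\{r+1,\ldots,r+s\}$, all of which $x$ sends to $[t]$. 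Since $x(j)<t$ for $j\leq p<r$ and $(\gamma x)(p)<t$ by Proposition~\ref{P4.6} (corank zero), the terminus equals $(\gamma x)(p)<t$ when $s=0$ and equals $t$ when $s>0$; in the latter case exactly the top $s$ vertices $p+2,\ldots,p+s+1$ map to $[t]$, giving rank $(p+s+2)-s=p+2$ and corank $s-1$ by Definition~\ref{D4.2}.

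For membership in $\Or$ I would expand $(\alpha_p x)[\mathbf{a}]$ on a basis element $[\mathbf{a}]$ of $\Z\Delta(p+s+1)$, splitting according to whether $\mathbf{a}$ contains $p$ and whether it contains $p+1$. Writing $[\mathbf{a}^\sharp]$ for the image of $[\mathbf{a}]$ under the reindexing of $\d_{p+1}^{r-p-1}$, the first summand is always $(x-\beta x)[\mathbf{a}^\sharp]$. When $\mathbf{a}$ omits $p+1$ the degeneracy formula makes the second summand equal to $(\beta x)[\mathbf{a}^\sharp]$, so the two combine to the single value $x[\mathbf{a}^\sharp]$, a sum of basis elements with nonnegative coefficients. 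When $\mathbf{a}$ contains $p+1$ this entry is pushed up to $r$, so $[\mathbf{a}^\sharp]$ contains $r$, and the argument rests on two facts about $\beta x=\e_r^s\gamma x$: it vanishes on any basis element containing two elements of the block $\{r,r+1,\ldots,r+s\}$, and on a basis element meeting the block only in~$r$ it equals the value of $\gamma x$ on the collapsed element, which by construction selects exactly the terms of $x[\ldots,r]$ ending in~$t$. Consequently the potentially dangerous subtracted term $(\beta x)[\mathbf{a}^\sharp]$ either vanishes (when $[\mathbf{a}^\sharp]$ has a further block entry above~$r$) or cancels against the $t$-terminal part of $x[\mathbf{a}^\sharp]$, leaving the non-$t$-terminal part of that image together with a nonnegative degeneracy contribution; in every case the coefficients are nonnegative.

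Controlling this subtraction in $x-\beta x$ is the main obstacle, since a priori the formula is a difference of chain maps and nonnegativity is not automatic. The resolution is precisely the bookkeeping above: the degeneracy $\e_r^s$ annihilates every basis element that repeats the collapsed block, while $\gamma$ isolates the $t$-terminal summands, so that only genuine summands of~$x$ (plus nonnegative degeneracy terms) survive. Finally, augmentation is preserved because each vertex was shown to map to a single basis element, and combined with nonnegativity this yields $\alpha_p x\in\Or(p+s+1,n)$; the rank and corank assertions then follow from the vertex count already made, via Theorem~\ref{T4.1} and Definition~\ref{D4.2}.
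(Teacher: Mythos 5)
Your proposal is correct and takes essentially the same route as the paper's proof: the same case analysis on basis elements according to the position of $p+1$ (resting on the facts that $\beta x=\e_r^s\gamma x$ kills elements with two entries in the block $\{r,\ldots,r+s\}$ and that $\gamma x$ selects exactly the $t$-terminal terms of $x[\mathbf{a},r]$), together with the same vertex computations $(\alpha_p x)[p+1]=(\gamma x)[p]\neq[t]$ and $(\alpha_p x)[j]=[t]$ for $p+2\leq j\leq p+s+1$ to read off terminus, rank and corank. The differences are only cosmetic: you compute the vertices first and handle the case where $\mathbf{a}$ omits $p+1$ by matching the two reindexings explicitly, where the paper uses the identity $\d_{p+1}\alpha_p x=\d_{p+1}^{r-p}x$.
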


\begin{proof}
It is clear that $\alpha_p x$ is an augmentation-preserving
chain map from $\Z\Delta(p+s+1)$ to $\Z\Delta(n)$. To show that
$\alpha_p x$ is a morphism in~$\Or$, we must show that
$\alpha_p x$ takes each basis element~$a$ to a sum of basis
elements.

Suppose that $a$~does not have a term $p+1$. Then $(\alpha_p
x)a$ is a sum of basis elements because $\d_{p+1}\alpha_p x=\d_{p+1}^{r-p}x$ and because $x$~is a morphism in~$\Or$.

Suppose that $a$~has a term $p+1$ and a term greater than
$p+1$. Then $(\d_{p+1}^{r-p-1}\beta x)a=0$, because $\beta x$
has the form $\e_r^s\gamma x$, so that
$$(\alpha_p x)a=(\d_{p+1}^{r-p-1}x)a+(\e_p\d_{p+1}^{r-p}\beta x)a.$$
It now follows that
$(\alpha_p x)a$~is a sum of basis elements because $x$~and $\beta x$ are morphisms in~$\Or$.

Suppose that $a=[\mathbf{a},p+1]$. Then $(\d_{p+1}^{r-p-1}\beta
x)a$ is the sum of the terms of the form $[\mathbf{b},t]$ in
$(\d_{p+1}^{r-p-1}x)a$, so $\{\d_{p+1}^{r-p-1}(x-\beta x)\}a$
is the sum of the remaining terms in $(\d_{p+1}^{r-p-1}x)a$,
and it again follows that $(\alpha_p x)a$ is a sum of basis
elements.

This shows that $(\alpha_p x)a$ is a sum of basis elements in
all cases; therefore $\alpha_p x$ is a morphism in
$\Or(p+s+1,n)$.

We will now consider the terminus and corank of $\alpha_p x$.
For $p+2\leq i\leq p+s+1$ it follows from the calculations
above and from Proposition~\ref{P4.3} that
$$(\alpha_p x)[i]=(\d_{p+1}^{r-p-1}x)[i]=x[r+i-p-1]=[t];$$
on the other hand,
$$(\alpha_p x)[p+1]
 =(x-\beta x)[r]+(\beta x)[p]
 =[t]-[t]+(\beta x)[p]
 =(\gamma x)[p]
 \neq [t].$$
If $s=0$ it now follows that $\alpha_p x$ has terminus less
than~$t$; if $s>0$ it follows that $\alpha_p x$ has
terminus~$t$, rank $p+2$ and corank $s-1$.

This completes the proof.
\end{proof}

We will express a morphism~$x$ in terms of the morphisms
$\alpha_p x$ and $\gamma x$ by using the following notation.

\begin{notation} \label{N4.9}
In a set with operations $\d_i$~and~$\w_i$ we write
$$u({}\w_k v)=u\w_k v,$$
regarding $({}\w_k v)$ as an operator which acts on the right,
and for $l\geq 1$ we write
$$u\w_k^l v
 =u({}\w_k\d_{k+1}^{l-1}v)({}\w_k\d_{k+1}^{l-2}v)
 \ldots({}\w_k\d_{k+1}v)({}\w_k v).$$
We also write
$$(u\w_k^l{})v=u\w_k^l v,$$
regarding $(u\w_k^l{})$ as an operator which acts on the left,
and for $r\geq 0$ we write
$$\L^r(u_{r-1},\ldots,u_0,v)
 =\d_r(u_{r-1}\w_{r-1}^1{})\d_{r-1}(u_{r-2}\w_{r-2}^2{})\ldots
 \d_1(u_0\w_0^r{})v.$$

Note in particular that $\L^0(v)=v$.
\end{notation}

In $\Or(-,n)$ an induction based on Definition~\ref{D3.5} gives
the following result.

\begin{proposition} \label{P4.10}
If $u$~and~$v$ are morphisms in $\Or(-,n)$ and if $\d_k
u=\d_{k+1}^l v$, then $u\w_k^l v$ is defined and
$$u\w_k^l v=\e_{k+1}^l u-\e_k^{l+1}\d_{k+1}^l v+\e_k v.$$
\end{proposition}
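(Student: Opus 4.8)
The plan is to argue by induction on $l$, using Definition~\ref{D3.5} to peel off one wedge at a time. The base case $l=1$ is immediate: by Notation~\ref{N4.9} we have $u\w_k^1 v=u\w_k v$, and the hypothesis $\d_k u=\d_{k+1}v$ makes this wedge defined, so Definition~\ref{D3.5} gives $u\w_k^1 v=\e_{k+1}u-\e_k^2\d_{k+1}v+\e_k v$, which is the claimed formula for $l=1$.

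For the inductive step I assume the result for $l$ and suppose $\d_k u=\d_{k+1}^{l+1}v$. The key observation is that the defining product in Notation~\ref{N4.9} factors as
$$u\w_k^{l+1}v=\bigl(u\w_k\d_{k+1}^l v\bigr)\w_k^l v;$$
that is, applying the leftmost operator $({}\w_k\d_{k+1}^l v)$ produces an auxiliary morphism $u'=u\w_k\d_{k+1}^l v$, and the remaining operators are exactly those defining $\w_k^l v$. The first wedge is defined because $\d_k u=\d_{k+1}^{l+1}v=\d_{k+1}(\d_{k+1}^l v)$, and by Proposition~\ref{P3.6} we have $\d_k u'=\d_{k+1}^l v$, so the inductive hypothesis applies to the pair $(u',v)$ and guarantees that $u'\w_k^l v$ is defined with $u'\w_k^l v=\e_{k+1}^l u'-\e_k^{l+1}\d_{k+1}^l v+\e_k v$.

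It then remains to substitute the Definition~\ref{D3.5} expansion $u'=\e_{k+1}u-\e_k^2\d_{k+1}^{l+1}v+\e_k\d_{k+1}^l v$ and simplify. The engine of the simplification is the identity $\e_{k+1}^l\e_k=\e_k^{l+1}$, itself an easy consequence of the simplicial relation $\e_k\e_k=\e_{k+1}\e_k$. Applying $\e_{k+1}^l$ to $u'$ and using this identity turns $\e_{k+1}^l\e_k\d_{k+1}^l v$ into $\e_k^{l+1}\d_{k+1}^l v$, which cancels the $-\e_k^{l+1}\d_{k+1}^l v$ term, turns $\e_{k+1}^l\e_k^2\d_{k+1}^{l+1}v$ into $\e_k^{l+2}\d_{k+1}^{l+1}v$, and leaves the leading term as $\e_{k+1}^{l+1}u$. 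What survives is exactly $\e_{k+1}^{l+1}u-\e_k^{l+2}\d_{k+1}^{l+1}v+\e_k v$, the formula for $l+1$. The only genuinely non-routine point is this bookkeeping with the degeneracy identities: one must verify both that the two $\d_{k+1}^l v$ terms cancel and that the surviving $\d_{k+1}^{l+1}v$ term collapses to the single power $\e_k^{l+2}$, and I expect the cancellation to be the main thing to get right.
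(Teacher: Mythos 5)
Your proof is correct and is exactly the argument the paper intends: the paper's entire justification for Proposition~\ref{P4.10} is the sentence ``an induction based on Definition~\ref{D3.5} gives the following result,'' and your induction supplies the details. The factorization $u\w_k^{l+1}v=(u\w_k\d_{k+1}^l v)\w_k^l v$ (valid by Notation~\ref{N4.9}), the use of Proposition~\ref{P3.6} to feed the inductive hypothesis, and the degeneracy identity $\e_{k+1}^l\e_k=\e_k^{l+1}$ are precisely the right bookkeeping, and the cancellation you describe checks out.
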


For an iterated wedge $z=u\w_k^l v$ in $\Or(-,n)$ as in Proposition~\ref{P4.10}, one finds that $\d_{k+2}^l z=u$, $\d_k z=v$, and
$$\d_{k+1}z=\e_{k+1}^{l-1}u-\e_k\d_{k+1}^l v+v,$$
so that $\d_{k+1}(u\w_k^l)$ is a pasting composite of $u$~and~$v$ and so that $u\w_k^l v$ joins this composite to its factors. The operation~$\L^r$ is an iterated pasting composite.

The desired canonical form is now as follows.

\begin{theorem} \label{T4.11}
Let $x$ be a morphism in $\Or(-,n)$, with rank~$r$ and 
corank~$s$. Then
$$x=\L^r(\alpha_{r-1}x,\ldots,\alpha_0 x,\e_r^s\gamma x).$$
\end{theorem}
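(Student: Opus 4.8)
The plan is to unwind the iterated construction $\L^r$ from the inside out and to track the result as an explicit element of $\Z\Delta(r+s,n)$. Writing $\beta x=\e_r^s\gamma x$ for the last argument and setting $z_0=\beta x$, the definition in Notation~\ref{N4.9} amounts to the recursion
$$z_{p+1}=\d_{p+1}\bigl(\alpha_p x\w_p^{r-p}z_p\bigr)\quad(0\le p<r),$$
so that $\L^r(\alpha_{r-1}x,\ldots,\alpha_0 x,\beta x)=z_r$ and the goal becomes $z_r=x$. Applying the face rule for iterated wedges from the remark following Proposition~\ref{P4.10}, with $k=p$ and $l=r-p$, rewrites this as the additive recursion
$$z_{p+1}=z_p+\e_{p+1}^{r-p-1}\alpha_p x-\e_p^{r-p}\d_{p+1}^{r-p}z_p.$$

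The heart of the argument is then to prove, by induction on~$p$, the closed form
$$z_p=\beta x+\e_p^{r-p}\d_p^{r-p}(x-\beta x)\quad(0\le p\le r),$$
whose terminal case $z_r=\beta x+(x-\beta x)=x$ is exactly the theorem. For the inductive step I would substitute the definition $\alpha_p x=\d_{p+1}^{r-p-1}(x-\beta x)+\e_p\d_{p+1}^{r-p}\beta x$ and the inductive form of~$z_p$ into the additive recursion and then simplify using the simplicial identities. The decisive collapses are $\d_{p+1}^{r-p}\e_p^{r-p}=\id$ and $\e_{p+1}^{r-p-1}\e_p=\e_p^{r-p}$; after these, the terms $\e_p^{r-p}\d_p^{r-p}(x-\beta x)$ and $\e_p^{r-p}\d_{p+1}^{r-p}\beta x$ cancel in pairs, leaving precisely $\beta x+\e_{p+1}^{r-p-1}\d_{p+1}^{r-p-1}(x-\beta x)$, which is the formula at~$p+1$.

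Two supporting points must be checked. First, each wedge $\alpha_p x\w_p^{r-p}z_p$ must actually be defined, i.e.\ $\d_p\alpha_p x=\d_{p+1}^{r-p}z_p$; using the closed form of~$z_p$ together with the collapses $\d_p\d_{p+1}^{r-p-1}=\d_p^{r-p}$ and $\d_p\e_p=\d_{p+1}\e_p=\id$, both sides reduce to $\d_p^{r-p}(x-\beta x)+\d_{p+1}^{r-p}\beta x$, so the wedge condition holds and, with Propositions~\ref{P3.3} and~\ref{P3.7}, $z_{p+1}$ stays in $\Or(r+s,n)$ at every stage. Second, the base case $p=0$ requires $z_0=\beta x$ to match the closed form, equivalently $\d_0^r(x-\beta x)=0$; this follows from Theorem~\ref{T4.1}, since $x(j)=\terminus x=t$ for $j\ge r$, so both $\d_0^r x$ and $\d_0^r\beta x$ send every vertex to~$[t]$ and annihilate all higher basis elements, hence coincide.

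I expect the main obstacle to be organising this simplicial bookkeeping cleanly rather than any conceptual difficulty: both the cancellation in the inductive step and the wedge-condition check rest on telescoping collapses of mixed strings of faces and degeneracies (such as $\d_{p+1}^{r-p}\e_p^{r-p}=\id$), and one must keep the exponent $r-p$ correctly aligned with the shifting wedge index~$p$ throughout. Once the two collapse lemmas and the vanishing $\d_0^r(x-\beta x)=0$ are in hand, the induction is a routine telescoping cancellation with $z_r=x$ as the conclusion.
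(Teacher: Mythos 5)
Your proposal is correct and is essentially the paper's own proof: the paper works with the same closed form $v_p=\e_p^{r-p}\d_p^{r-p}(x-\beta x)+\beta x$, establishes $v_0=\e_r^s\gamma x$ from Theorem~\ref{T4.1} exactly as in your base case, and verifies $v_{p+1}=\d_{p+1}(\alpha_p x\w_p^{r-p}v_p)$ via the formula of Proposition~\ref{P4.10}, which is your inductive step with the same simplicial-identity collapses and the same wedge-definedness check $\d_p\alpha_p x=\d_{p+1}^{r-p}v_p$. The only difference is one of presentation: the paper defines the sequence by the closed formula and checks that it satisfies the $\L^r$ recursion, whereas you define it by the recursion and prove the closed formula by induction.
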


\begin{proof}
Recall that
$$\alpha_p x
 =\d_{p+1}^{r-p-1}(x-\beta x)+\e_p\d_{p+1}^{r-p}\beta x,$$
where $\beta x=\e_r^s\gamma x$. For $0\leq p\leq r$, let
$$v_p=\e_p^{r-p}\d_p^{r-p}(x-\beta x)+\beta x,$$
so that, in particular, $v_r=x$. It suffices to show that
\begin{align*}
 &v_0=\e_r^s\gamma x,\\
 &v_{p+1}=\d_{p+1}(\alpha_p x\w_p^{r-p}v_p)\quad (0\leq p<r);
\end{align*}
we proceed as follows.

The morphisms $\d_0^r x$ and $\d_0^r\beta x$ are morphisms in
$\Or(s,n)$ such that
$$(\d_0^r x)[i]=(\d_0^r\beta x)[i]=[t]$$
for $0\leq i\leq s$. By Theorem~\ref{T4.1}, if $a$~is a basis element for $\Z\Delta(s)$ with positive dimension then
$$(\d_0^r x)a=(\d_0^r\beta x)a=0;$$
therefore $\d_0^r
x=\d_0^r\beta x$. It follows that $v_0=\beta x=\e_r^s\gamma x$.

For $0\leq p<r$ it is straightforward to verify that
$\d_p\alpha_p x=\d_{p+1}^{r-p}v_p$ and that
$$v_{p+1}=\d_{p+1}
 (\e_{p+1}^{r-p}\alpha_p x-\e_p^{r-p+1}\d_{p+1}^{r-p}v_p+\e_p v_p);$$
therefore $v_{p+1}=\d_{p+1}(\alpha_p x\w_p^{r-p}v_p)$.

This completes the proof.
\end{proof}

We can now prove the main result.

\begin{theorem} \label{T4.12}
Every morphism in $\Or(-,n)$ can be expressed in terms
of~$\iota_n$ by using the face, degeneracy and wedge
operations.
\end{theorem}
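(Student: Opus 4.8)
The plan is to strengthen the statement into one that supports an induction, following the strategy announced before Definition~\ref{D4.4}: for each $t$ with $0\leq t\leq n$, I would show that every morphism of terminus at most~$t$ can be expressed in terms of the single morphism $\d_{t+1}^{n-t}\iota_n$ using the face, degeneracy and wedge operations. Setting $t=n$ then yields the theorem, since every morphism has terminus at most~$n$ by Proposition~\ref{P4.3} and since $\d_{n+1}^0\iota_n=\iota_n$. It is crucial to track the specific generator $\d_{t+1}^{n-t}\iota_n$ rather than merely asserting expressibility in terms of~$\iota_n$; this refinement is exactly what makes the cone construction usable, and I expect it to be the main subtlety of the whole argument.

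The backbone is the canonical form of Theorem~\ref{T4.11},
$$x=\L^r(\alpha_{r-1}x,\ldots,\alpha_0 x,\e_r^s\gamma x),$$
which already writes a morphism~$x$ of rank~$r$ and corank~$s$ as operations applied to the simpler morphisms $\alpha_0x,\ldots,\alpha_{r-1}x$ and~$\gamma x$. So it suffices to express each of these summands in terms of $\d_{t+1}^{n-t}\iota_n$. I would run a main induction on the terminus~$t$ and, for each fixed~$t$, a secondary induction on the corank~$s$. The base case $t=0$ is immediate: such an~$x$ has rank~$0$, so there are no $\alpha_p$ terms, and $\gamma x=\d_0^0\d_1^n\iota_n=\d_1^n\iota_n$ by Proposition~\ref{P4.6}, whence $x=\e_0^s\d_1^n\iota_n$ is expressed directly.

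The summand $\e_r^s\gamma x$ is handled first, and always without recourse to the secondary hypothesis, since $\gamma x$ has corank zero by Proposition~\ref{P4.6}. If $r=0$ then $\gamma x=\d_0^t\d_{t+1}^{n-t}\iota_n$ is explicit. If $r>0$ then $\gamma x$ is a $t$-cone, say $\gamma x=\rho_t w$ with~$w$ of terminus less than~$t$; by the main inductive hypothesis $w$ is expressible in terms of $\d_t^{n-t+1}\iota_n$, and since $\rho_t$ commutes with all the operations on~$S_{t-1}$ and satisfies $\rho_t\d_t^{n-t+1}\iota_n=\d_{t+1}^{n-t}\iota_n$ by Proposition~\ref{P4.5}, applying~$\rho_t$ to that expression exhibits $\gamma x$ in terms of $\d_{t+1}^{n-t}\iota_n$. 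The closure of~$S_{t-1}$ under the operations guarantees that every intermediate morphism in the expression for~$w$ stays inside~$S_{t-1}$, which is precisely what licenses the commutation; this is the step that forces the refined inductive statement, and the hard part lies in arranging that statement so that this commutation is available.

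Finally I would treat the summands $\alpha_px$ for $0\leq p<r$ by appeal to Proposition~\ref{P4.8}. If $s=0$ then $\alpha_px$ has terminus less than~$t$, so the main inductive hypothesis expresses it in terms of $\d_t^{n-t+1}\iota_n$; since the simplicial identities give $\d_t^{n-t+1}\iota_n=\d_t\d_{t+1}^{n-t}\iota_n$, one further face operation turns this into an expression in terms of $\d_{t+1}^{n-t}\iota_n$. If $s>0$ then $\alpha_px$ has terminus~$t$ and corank $s-1$, so the secondary inductive hypothesis applies directly. In every case all summands of the canonical form are expressed in terms of $\d_{t+1}^{n-t}\iota_n$, which completes the double induction and hence the proof.
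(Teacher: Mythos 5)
Your proposal is correct and takes essentially the same route as the paper's own proof: a primary induction on terminus combined with a recursion on corank, using the canonical form of Theorem~\ref{T4.11}, handling $\gamma x$ via Propositions \ref{P4.6} and~\ref{P4.5} (the cone construction $\rho_t$ applied to the inductive expression), and handling the $\alpha_p x$ via Proposition~\ref{P4.8}. The only difference is presentational: the paper packages the corank recursion as exhibiting a generating set for~$S_t$ (namely $\d_0^t\d_{t+1}^{n-t}\iota_n$, the $t$-cones, and the members of~$S_{t-1}$), whereas you express each morphism directly; the substance, including the key point that closure of~$S_{t-1}$ under the operations licenses commuting $\rho_t$ past an entire expression, is identical.
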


\begin{proof}
Let $S_t$ be the set of morphisms with terminus at most~$t$. We
will prove by induction on~$t$ that $S_t$~is generated by
$\d_{t+1}^{n-t}\iota_n$; the case $t=n$ will then give the
result.

Let $x$ be a morphism in $S_t\setminus S_{t-1}$, so that $x$~has terminus~$t$. By Theorem~\ref{T4.11}, $x$~has an expression in terms of the morphisms $\gamma x$ and $\alpha_p x$. By Proposition~\ref{P4.6}, $\gamma x=\d_0^t\d_{t+1}^{n-t}\iota_n$ or $\gamma x$ is a $t$-cone. By Proposition~\ref{P4.8}, $\alpha_p x\in S_t$, and if $\alpha_p x\in S_t\setminus S_{t-1}$ then $\alpha_p x$ has lower corank than~$x$. By a recursion on corank, it follows that $S_t$~has a set of generators
consisting of the morphism $\d_0^t\d_{t+1}^{n-t}\iota_n$, the
$t$-cones, and the morphisms in~$S_{t-1}$. It therefore suffices to show that $t$-cones and members of~$S_{t-1}$ can be expressed in terms of $\d_{t+1}^{n-t}\iota_n$; equivalently, it suffices to show that $\rho_t y$ and~$y$ can be expressed in terms of $\d_{t+1}^{n-t}\iota_n$ for every morphism~$y$ in~$S_{t-1}$.

For $t=0$ there is nothing to prove, because $S_{-1}$~is empty.

Now suppose that $t>0$. By the inductive hypothesis, $y$~can be expressed in terms of $\d_t^{n-t+1}\iota_n$. It now follows from Proposition~\ref{P4.5} that $\rho_t y$ can be
expressed in terms of $\d_{t+1}^{n-t}\iota_n$. It also follows that $y$~itself 
can be expressed in terms of $\d_{t+1}^{n-t}\iota_n$, because
$\d_t^{n-t+1}\iota_n=\d_t\d_{t+1}^{n-t}\iota_n$.

This completes the proof.
\end{proof}

\section{Further properties of the nerves of orientals} \label{S5}

We have shown in Theorem~\ref{T4.12} that $\Or(-,n)$ is
generated by~$\iota_n$, and in Theorem~\ref{T4.11} we have found canonical forms for the members of $\Or(-,n)$. In the later sections of this paper we
will find identities determining the structure of $\Or(-,n)$
completely. In the present section we give some auxiliary results on the canonical forms, for later use.

First we describe the canonical forms for certain faces.

\begin{proposition} \label{P5.1}
Let $x$ be a morphism in $\Or(-,n)$ with rank~$r$, and let
$\d_i x$ be a face with $0\leq i<r$. Then $\d_i x$ has rank $r-1$ and corank~$s$, and
\begin{align*}
 &\gamma\d_i x=\d_i\gamma x,\\
 &\alpha_p\d_i x=\alpha_p x\quad (0\leq p<i),\\
 &\alpha_p\d_i x=\d_i\alpha_{p+1}x\quad (i\leq p<r-1).
\end{align*}
\end{proposition}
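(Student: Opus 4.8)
The plan is to proceed in four stages: compute the rank and corank of $\d_i x$, establish the identity $\gamma\d_i x=\d_i\gamma x$, deduce the corresponding identity for $\beta x=\e_r^s\gamma x$, and finally read off the formulas for the $\alpha_p$ by formal manipulation. Throughout write $t=\terminus x$ and $s=\corank x$.

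First I would settle the rank and corank by tracking zero-dimensional images. By Theorem~\ref{T4.1} the integers satisfy $x(0)\leq\ldots\leq x(m)=t$, with $x(j)<t$ for $j<r$ and $x(j)=t$ for $j\geq r$, and by Definition~\ref{D3.2} the images of $\d_i x$ are obtained from those of $x$ by deleting the value $x(i)$. Since $i<r$ the deleted value is less than~$t$, so the terminus stays~$t$, the number of images less than~$t$ drops from~$r$ to~$r-1$, and the number of interior images equal to~$t$ is unchanged; thus $\d_i x$ has rank~$r-1$ and corank~$s$.

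The main obstacle is the identity $\gamma\d_i x=\d_i\gamma x$, which is not formal; I would check it directly on basis elements of $\Z\Delta(r-1)$. Since $i<r$, Definition~\ref{D3.2} gives $(\d_i x)[\mathbf{a},r-1]=x[\mathbf{a}^+,r]$, where $\mathbf{a}^+$ is obtained from $\mathbf{a}$ by adding~$1$ to those terms that are at least~$i$; the top index~$r$ is produced precisely because $i\leq r-1$, and it is left untouched by the shift. Now $\gamma\d_i x$ evaluated on $[\mathbf{a},r-1]$ selects the terms of the form $[\mathbf{b},t]$ in $(\d_i x)[\mathbf{a},r-1]=x[\mathbf{a}^+,r]$, whereas $\d_i\gamma x$ evaluated on $[\mathbf{a},r-1]$ equals $(\gamma x)[\mathbf{a}^+,r]$, which by the definition of~$\gamma$ is again the sum of the $[\mathbf{b},t]$ terms in $x[\mathbf{a}^+,r]$. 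Hence the two agree on the basis elements of $\Z\Delta(r-1)$ containing~$r-1$; since the values of~$\gamma$ on the remaining basis elements are likewise read off the images $x[\mathbf{a}^+,r]$, the same computation disposes of them, giving $\gamma\d_i x=\d_i\gamma x$. The difficulty here is that, unlike the remaining identities, this one forces us to unwind the selection operator~$\gamma$ and match coefficients, keeping track of the shift $\mathbf{a}\mapsto\mathbf{a}^+$.

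With this in hand the rest is formal. Since $\d_i x$ has rank~$r-1$ we have $\beta\d_i x=\e_{r-1}^s\gamma\d_i x$, and the simplicial identity $\d_i\e_j=\e_{j-1}\d_i$ for $i<j$ gives $\d_i\e_r^s=\e_{r-1}^s\d_i$; combined with $\gamma\d_i x=\d_i\gamma x$ this yields $\beta\d_i x=\d_i\beta x$. I would then substitute the defining expression $\alpha_p x=\d_{p+1}^{r-p-1}(x-\beta x)+\e_p\d_{p+1}^{r-p}\beta x$, replace~$x$ by $\d_i x$ of rank~$r-1$, and push $\d_i$ through using $\beta\d_i x=\d_i\beta x$ together with a short induction showing $\d_{p+1}^{j}\d_i=\d_{p+1}^{j+1}$ whenever $p+1\leq i\leq p+1+j$. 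For $p<i$ this auxiliary identity collapses the extra face in each term and gives $\alpha_p\d_i x=\alpha_p x$; for $i\leq p<r-1$ the identities $\d_i\d_{p+2}=\d_{p+1}\d_i$ and $\d_i\e_{p+1}=\e_p\d_i$ carry $\d_i$ to the outside and give $\alpha_p\d_i x=\d_i\alpha_{p+1}x$. These last manipulations are routine once the identity for~$\gamma$ is in place, which is why the third paragraph is the crux.
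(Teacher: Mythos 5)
Your proposal is correct and follows essentially the same route as the paper: compute terminus, rank and corank of $\d_i x$, establish $\gamma\d_i x=\d_i\gamma x$, deduce $\beta\d_i x=\d_i\beta x$ via the simplicial identity $\d_i\e_r^s=\e_{r-1}^s\d_i$, and push $\d_i$ through the defining formula for $\alpha_p$ in the two cases $p<i$ and $i\leq p<r-1$. The only difference is one of emphasis: the paper dismisses $\gamma\d_i x=\d_i\gamma x$ as ``clear from the construction,'' whereas you verify it explicitly on basis elements, which is a sound (and slightly more complete) rendering of the same step.
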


\begin{proof}
Let the terminus of~$x$ be~$t$ and the corank be~$s$. It
follows from Proposition~\ref{P4.3} that $\d_i x$ has
terminus~$t$, rank $r-1$ and corank~$s$. It is clear from the
construction that $\gamma\d_i x=\d_i\gamma x$, it follows that
$$\beta\d_i x
 =\e_{r-1}^s\gamma\d_i x
 =\e_{r-1}^s\d_i\gamma x
 =\d_i\e_r^s\gamma x
 =\d_i\beta x,$$
and for $0\leq p<r-1$ it follows that
$$\alpha_p\d_i x
 =\d_{p+1}^{r-p-2}(\d_i x-\d_i\beta x)+\e_p\d_{p+1}^{r-p-1}\d_i\beta x.$$
If now $0\leq p<i$ then
$$\alpha_p\d_i x
 =\d_{p+1}^{r-p-1}(x-\beta x)+\e_p\d_{p+1}^{r-p}\beta x
 =\alpha_p x;$$
if $i\leq p<r-1$ then
$$\alpha_p\d_i x
 =\d_i\d_{p+2}^{r-p-2}(x-\beta x)+\d_i\e_{p+1}\d_{p+2}^{r-p-1}\beta x
 =\d_i\alpha_{p+1}x.$$
\end{proof}

Next we consider wedges. If $x$~is a wedge, say $x=u\w_i v$, then $u=\d_{i+2}x$ and $v=\d_i x$ by Proposition~\ref{P3.6}. A wedge operation~$\w_i$ is therefore entirely determined by its image set, which will be denoted $\im\w_i$. We will now describe the canonical forms of wedges in terms of these image sets.

\begin{proposition} \label{P5.2}
Let $x$ be a morphism of rank~$r$ in $\Or(-,n)$.

If $x\in\im\w_i$ with $i\leq r-3$ then $\gamma x\in\im\w_i$,
and $\alpha_p x\in\im\w_i$ for $i+2\leq p<r$.

If $x\in\im\w_{r-2}$ then $\gamma x\in\im\e_{r-2}$ and
$\alpha_{r-1}x\in\im\w_{r-2}$.

If $x\in\im\w_{r-1}$ then $\alpha_{r-1}x\in\im\w_{r-1}$.

If $x\in\im\w_i$ with $r\leq i$ then $\alpha_p
x\in\im\w_{i-r+p+1}$ for $0\leq p<r$.
\end{proposition}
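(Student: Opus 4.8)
The plan is to reduce every assertion to a vanishing condition on basis elements, using the characterisations of Propositions~\ref{P3.4} and~\ref{P3.8}: a morphism lies in $\im\e_k$ exactly when it vanishes on every basis element containing the consecutive pair $\{k,k+1\}$, and it lies in $\im\w_k$ exactly when it vanishes on every basis element containing the consecutive triple $\{k,k+1,k+2\}$. The hypothesis $x\in\im\w_i$ thus says that $x$ kills every basis element containing $\{i,i+1,i+2\}$, and each conclusion is likewise a statement that $\gamma x$ or $\alpha_p x$ kills basis elements containing a suitable consecutive triple or pair. Two facts about $\beta x=\e_r^s\gamma x$ will be used repeatedly. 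First, $\beta x$ is a degeneracy in the directions $r,\ldots,r+s-1$, so it vanishes on every basis element containing a consecutive pair $\{k,k+1\}$ with $r\leq k\leq r+s-1$. Second, since the degeneracies $\e_r^s$ fix all indices below~$r$, the map $\beta x$ inherits from $\gamma x$ every vanishing condition supported on indices $<r$; in particular $\e_r^s$ preserves $\im\e_k$ for $k+1<r$ and $\im\w_k$ for $k+2<r$.

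I will first treat $\gamma x$, using its defining description as the record of the coefficients of the terms $[\mathbf{b},t]$ in the images $x[\mathbf{a},r]$. Let $a$ be a basis element of $\Z\Delta(r)$ containing the relevant low triple $\{i,i+1,i+2\}$ (for the first assertion) or the low pair $\{r-2,r-1\}$ (for the second). Whether or not $a$ contains $r$, the value $(\gamma x)a$ is assembled from the coefficients of some $x[\mathbf{a},r]$, and in both cases $[\mathbf{a},r]$ contains the full consecutive triple---namely $\{i,i+1,i+2\}$ in the first case and $\{r-2,r-1,r\}$ in the second. Since $x$ annihilates such basis elements, all these coefficients vanish, so $(\gamma x)a=0$. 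This gives $\gamma x\in\im\w_i$ and $\gamma x\in\im\e_{r-2}$ respectively, and the facts about $\beta x$ above then show that $\beta x$ satisfies the same vanishing.

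For the maps $\alpha_p x$ I will expand the defining formula
$$\alpha_p x=\d_{p+1}^{r-p-1}(x-\beta x)+\e_p\d_{p+1}^{r-p}\beta x$$
and track reindexing: the operator $\d_{p+1}^{l}$ fixes every index $\leq p$ and raises every index $\geq p+1$ by~$l$, while $\e_p$ fixes indices $\leq p$. In the low regime $i+2\leq p$ the target triple $\{i,i+1,i+2\}$ sits below~$p$ and is fixed by every operation occurring, so both summands reduce to $x$ or $\beta x$ evaluated on a basis element still containing $\{i,i+1,i+2\}$, and hence vanish; this yields $\alpha_p x\in\im\w_i$. For $p=r-1$ the formula collapses to $\alpha_{r-1}x=x-\beta x+\e_{r-1}\d_r\beta x$: on a basis element containing the triple $\{r-2,r-1,r\}$ (case $i=r-2$) or $\{r-1,r,r+1\}$ (case $i=r-1$) the first two terms vanish by the hypothesis and by the $\beta x$-facts, while the last vanishes because $\e_{r-1}$ meets the pair $\{r-1,r\}$. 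Finally, in the high regime $i\geq r$ the target triple $\{j,j+1,j+2\}$ with $j=i-r+p+1\geq p+1$ lies above~$p$; under $\d_{p+1}^{r-p-1}$ it is raised to $\{i,i+1,i+2\}$, so the first summand vanishes, and after applying $\e_p$ the triple leaves a consecutive pair of indices $\geq p+1$, on which $\d_{p+1}^{r-p}\beta x$ vanishes by degeneracy of $\beta x$ in the high directions.

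I expect the delicate point to be the degeneracy summand $\e_p\d_{p+1}^{r-p}\beta x$ in the high regime. Here one must distinguish whether the basis element contains both $p$ and $p+1$ (in which case $\e_p$ annihilates it outright) or not (in which case $\e_p$ merges or shifts its indices); in each subcase the key is to check that a consecutive pair at height $\geq p+1$ persists, so that the high-direction degeneracy of $\beta x$ forces the value to be zero. Once this bookkeeping is carried out, the four assertions follow by assembling the cases above.
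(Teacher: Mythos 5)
Your proposal is correct and takes essentially the same route as the paper's own proof: both reduce every assertion to the vanishing criteria of Propositions~\ref{P3.4} and~\ref{P3.8}, obtain the statements about $\gamma x$ from the fact that $x$ kills basis elements containing the relevant consecutive block together with~$r$, and then verify that each summand in the defining formula for $\alpha_p x$ kills the relevant basis elements. The only point the paper makes explicit that you leave implicit is the bound $i\leq r+s-2$ forced by Proposition~\ref{P3.8} (which in the case $i=r-1$ gives $s\geq 1$); this is exactly what guarantees, in your ``delicate'' high regime, that the surviving consecutive pair lands among the degenerate directions $r,\ldots,r+s-1$ of $\beta x=\e_r^s\gamma x$.
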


\begin{proof}
Let the corank of~$x$ be~$s$, so that
$$\alpha_p x
 =\d_{p+1}^{r-p-1}x
 -\d_{p+1}^{r-p-1}\e_r^s\gamma x
 +\e_p\d_{p+1}^{r-p}\e_r^s\gamma x,$$
and recall from Proposition~\ref{P3.8} that a morphism is in
$\im\w_i$ if and only if it annihilates all basis elements
including $i,i+1,i+2$.

Suppose that $x\in\im\w_i$ with $i\leq r-3$. Then $\gamma x$
annihilates basis elements including $i,i+1,i+2$ because
$x$~annihilates basis elements including $i,i+1,i+2,r$;
therefore $\gamma x\in\im\w_i$. For $i+2\leq p<r$ it follows
that $\d_{p+1}^{r-p-1}x$, $\d_{p+1}^{r-p-1}\e_r^s\gamma x$ and
$\e_p\d_{p+1}^{r-p}\e_r^s\gamma x$ annihilate all basis
elements including $i,i+1,i+2$, and it then follows that
$\alpha_p x\in\im\w_i$.

Suppose that $x\in\im\w_{r-2}$. In this case $\gamma x$
annihilates all basis elements including $r-2,r-1$ because
$x$~annihilates all basis elements including $r-2,r-1,r$;
hence, by Proposition~\ref{P3.4}, $\gamma x\in\im\e_{r-2}$. As
in the previous case, $\alpha_{r-1}x\in\im\w_{r-2}$.

Suppose that $x\in\im\w_{r-1}$. Then $s\geq 1$, so that
$\e_r^s\gamma x$ and $\e_{r-1}\d_r\e_r^s\gamma x$ are in
$\im\w_{r-1}$. As before, it follows that
$\alpha_{r-1}x\in\im\w_{r-1}$.

Finally, suppose that $x\in\im\w_i$ with $r\leq i$, and suppose
that $0\leq p<r$. Then $i\leq r+s-2$, so $\d_{p+1}^{r-p-1}x$,
$\d_{p+1}^{r-p-1}\e_r^s\gamma x$ and
$\e_p\d_{p+1}^{r-p}\e_r^s\gamma x$ are all in
$\im\w_{i-r+p+1}$; therefore $\alpha_p x\in\im\w_{i-r+p+1}$.
\end{proof}

To finish this section, we describe the termini and coranks that appear in the evaluation of an expression of the form
$\L^r(u_{r-1},\ldots,u_0,v)$ (see Notation~\ref{N4.9}).

\begin{proposition} \label{P5.3}
Let $\L^r(u_{r-1},\ldots,u_0,v)$ be an expression defined in
$\Or(-,n)$, let $v_0=v$, and for $1\leq p\leq r$ let
$$w_p=u_{p-1}\w_{p-1}^{r-p+1}v_{p-1},\quad v_p=\d_p w_p,$$
so that $\L^r(u_{r-1},\ldots,u_0,v)=v_r$. Then the morphisms $v_p$~and~$w_p$ all have the same terminus
as~$v$. If $\rank v\geq r$ then the morphisms $v_p$~and~$w_p$
all have the same corank as~$v$.
\end{proposition}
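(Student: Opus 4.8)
The plan is to track everything at the level of the vertex sequences provided by Theorem~\ref{T4.1}. For a morphism $x$ in $\Or(m,n)$ I write $x(0)\le\ldots\le x(m)$ for the integers determined by $x[i]=[x(i)]$; by Definition~\ref{D4.2} the terminus is the last entry $x(m)$ and the corank is the number of earlier entries equal to it. On these sequences a face $\d_j$ acts by deleting the $j$-th entry and a degeneracy by repeating it, so a face $\d_j$ with $j$ below the top dimension leaves the terminus unchanged. First I would record the dimensions: since the innermost operation $u_0\w_0^r v$ requires $\d_1^r v$ to be defined, the hypothesis that the whole expression is defined forces $\dim v\ge r$. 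Writing $d=\dim v$, a routine count using Definition~\ref{D3.5} gives $v_p\in\Or(d,n)$ and $w_p\in\Or(d+1,n)$ for all $p$; in particular $p-1<p\le r\le d$, so every face occurring below is a non-top face.

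The heart of the matter is to compute the vertex sequences of $w_p$ and $v_p$ from the two identities recorded just after Proposition~\ref{P4.10}: with $k=p-1$ and $l=r-p+1$ these read $\d_{p-1}w_p=v_{p-1}$ and $\d_{p+1}^{r-p+1}w_p=u_{p-1}$. The first says that the sequence of $w_p$ is that of $v_{p-1}$ with a single new entry inserted in position $p-1$; the second deletes only entries in positions $\ge p+1$ and so identifies positions $0,\ldots,p$ of $w_p$ with those of $u_{p-1}$, whence the inserted entry is $w_p(p-1)=u_{p-1}(p-1)$. Applying $v_p=\d_p w_p$ then deletes $w_p(p)=v_{p-1}(p-1)$, so $v_p$ agrees with $v_{p-1}$ in every position except $p-1$, where
$$v_p(p-1)=u_{p-1}(p-1).$$
Thus the single step from $v_{p-1}$ to $v_p$ alters exactly one vertex, lying in the non-top position $p-1$.

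From here the terminus assertion is immediate: the altered entry is never the last one, so $\terminus v_p=\terminus w_p=\terminus v_{p-1}$, and an induction on $p$ gives the common value $\terminus v$. For the corank, put $t=\terminus v$ and assume $\rank v\ge r$. The vertices altered in the steps $1,\ldots,p-1$ occupy positions $<p-1$, so $v_{p-1}(p-1)=v(p-1)$; since $p-1<r\le\rank v$ this entry satisfies $v(p-1)<t$, and monotonicity of $w_p$ then yields
$$u_{p-1}(p-1)=w_p(p-1)\le w_p(p)=v_{p-1}(p-1)<t.$$
Hence the one modified vertex stays strictly below $t$ in both $v_{p-1}$ and $v_p$, so it is counted in the corank of neither and the corank is unchanged; an analogous count of the entries equal to $t$ settles $w_p$. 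I expect the crux to be exactly this last step: pinning down which single vertex changes and showing that the hypothesis $\rank v\ge r$ forces it to remain below the terminus. Without that hypothesis the altered entry could equal $t$ and the corank would jump, so the role of $\rank v\ge r$ is genuinely essential here.
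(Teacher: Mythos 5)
Your proof is correct and takes essentially the same route as the paper's: both arguments track the vertex sequences of Theorem~\ref{T4.1} through the wedge face identities $v_p=\d_p w_p$ and $\d_{p-1}w_p=v_{p-1}$ (Proposition~\ref{P3.6}), read the terminus off the unchanged top vertex, and obtain the corank by counting vertices equal to $t$ under the hypothesis $\rank v\geq r$. Your bookkeeping is in fact slightly finer than the paper's: by also using $\d_{p+1}^{r-p+1}w_p=u_{p-1}$ you pin down the unique altered vertex $v_p(p-1)=u_{p-1}(p-1)$ and bound it below $t$ by monotonicity, which makes explicit what the paper compresses into ``it follows'' --- indeed the paper's displayed chain of vertex equalities is literally valid only for $i\geq r$ (its first link $v_r[i]=w_r[i+1]$ needs $i\geq r$), and in the boundary case $\rank v=r$ one genuinely needs your estimate $u_{p-1}(p-1)\leq v_{p-1}(p-1)<t$ to see that the altered vertex does not disturb the count.
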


\begin{proof}
Let $v\in\Or(m,n)$, so that $m\geq r$, and so that $v_p\in\Or(m,n)$ and $w_p\in\Or(m+1,n)$ for all relevant~$p$. Since $v_p=\d_p w_p$ by construction, and since $\d_{p-1}w_p=v_{p-1}$ by
Proposition~\ref{P3.6}, for $r-1\leq i\leq m$ we get
$$v_r[i]
 =w_r[i+1]
 =v_{r-1}[i]
 =\ldots
 =w_1[i+1]
 =v_0[i]
 =v[i].$$

Let the terminus of~$v$ be~$t$, so that $v[m]=t$; then $v_p[m]=t$ and $w_p[m+1]=t$, so the morphisms $v_p$~and~$w_p$ also have terminus~$t$. 

Now suppose that $\rank v\geq r$. Let the corank of~$v$ be~$s$, so that $m-s\geq r$. By the definition of corank, we have $v[i]=[t]$ if and only if $m-s\leq i\leq m$. It follows that $v_p[i]=[t]$ if and only if $m-s\leq i\leq m$ and that $w_p[j]=[t]$ if and only if $m+1-s\leq j\leq m+1$. Therefore $v_p$~and~$w_p$ also have corank~$s$.
\end{proof}

\section{Complicial identities} \label{S6}

In this section we define sets with complicial identities; they
will be simplicial sets with wedge operations subject to
certain axioms. We will then show that the simplicial sets
$\Or(-,n)$ satisfy these axioms.

\begin{definition} \label{D6.1}
A \emph{set with complicial identities} is a simplicial
set~$X$, together with \emph{wedges}
$$x\w_i y\in X_{m+1},$$
defined when $x,y\in X_m$ and $\d_i x=\d_{i+1}y$, such that the
following axioms hold.

\textup{(1)} If $x\w_i y$ is defined with $x,y\in X_m$, then
\begin{align*}
 &\d_j(x\w_i y)=\d_j x\w_{i-1}\d_j y\quad (0\leq j<i),\\
 &\d_i(x\w_i y)=y,\\
 &\d_{i+2}(x\w_i y)=x,\\
 &\d_j(x\w_i y)=\d_{j-1}x\w_i\d_{j-1}y\quad (i+3\leq j\leq m+1).
\end{align*}

\textup{(2)} If $x\in X_m$ and $0\leq i<m$ then
$$\e_i x=\e_i\d_{i+1}x\w_i x,\quad \e_{i+1}x=x\w_i\e_i\d_i x.$$

\textup{(3)} If $A$~is of the form $b\w_i(y\w_i z)$ then
$$A=(\d_{i+2}b\w_i y)\w_{i+1}\d_{i+1}A.$$

\textup{(4)} If $A$~is of the form $(x\w_i y)\w_{i+1}c$ then
$$A=\d_{i+2}A\w_i(y\w_i\d_i c).$$

\textup{(5)} The equality
$$[x\w_i\d_{i+1}(y\w_i z)]\w_i(y\w_i z)
 =(x\w_i y)\w_{i+1}[\d_{i+1}(x\w_i y)\w_i z]$$
holds whenever either side is defined.

\textup{(6)} If $A$~is of the form
$\d_{i+2}[(x\w_{i+1}y)\w_{i+1}(y\w_i z)]$, then the
equality
$$A\w_i(w\w_{i+1}\d_i A)=(\d_{i+3}A\w_i w)\w_{i+2}A$$
holds whenever either side is defined.

\textup{(7)} If $i\leq j-3$ then the equality
$$(x\w_i y)\w_j(z\w_i w)=(x\w_{j-1}z)\w_i(y\w_{j-1}w)$$
holds whenever either side is defined.

A \emph{morphism of sets with complicial identities} $f\colon
X\to Y$ is a sequence of functions $f\colon X_m\to Y_m$
commuting with the face, degeneracy and wedge operations.
\end{definition}

\begin{remark} \label{R6.2}
There is some redundancy in these axioms. Indeed, the
degeneracy operations on elements of positive dimension are
redundant because of axiom~(2). For similar reasons, one can
omit some of the simplicial identities, retaining only
$\d_i\d_j=\d_{j-1}\d_i$ (for $i<j$) and
$\d_i\e_i=\d_{i+1}\e_i=\id$.
\end{remark}

\begin{remark} \label{R6.3}
Axiom~(3) could be written simply as an equality
$$b\w_i(y\w_i z)
 =(\d_{i+2}b\w_i y)\w_{i+1}\d_{i+1}[b\w_i(y\w_i z)],$$
required when either side is defined. A similar remark applies
to axioms (4) and~(6).
\end{remark}

\begin{remark} \label{R6.4}
Note that axiom~(1) does not give a formula for $\d_{i+1}(x\w_i
y)$. In a sense, the operation~$\w_i$ exists in order to
construct the operation
$$(x,y)\mapsto\d_{i+1}(x\w_i y).$$
By using axioms (1), (2) and~(5), one can show that this binary
operation is associative and that it makes the $m$-dimensional
elements into the morphisms of a category. The objects are the
$(m-1)$-dimensional elements, the source and target of a
morphism~$x$ are $\d_i x$ and $\d_{i+1}x$, and the identity of
an object~$a$ is $\e_i a$.
\end{remark}

\begin{remark} \label{R6.5}
I intend to show in a future paper that sets with complicial
identities are equivalent to the complicial sets used by Verity
in~\cite{V}.
\end{remark}

We will now show that the axioms of Definition~\ref{D6.1} apply
to orientals.

\begin{proposition} \label{P6.6}
The graded sets $\Or(-,n)$ are sets with complicial identities.
\end{proposition}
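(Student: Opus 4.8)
The plan is to verify the seven axioms of Definition~\ref{D6.1} one at a time, reducing each to an identity in the ambient simplicial abelian group $\Z\Delta(-,n)$. By Proposition~\ref{P3.3} the face and degeneracy operations preserve the subset $\Or(-,n)$, and by Proposition~\ref{P3.7} so do the wedge operations; hence every composite operation occurring in the axioms, when defined, takes morphisms in $\Or(-,n)$ to morphisms in $\Or(-,n)$. Since $\Or(-,n)$ is a subset of $\Z\Delta(-,n)$ on which the operations agree, it suffices to check each equation in $\Z\Delta(-,n)$, where the wedge is given by the single explicit formula $x\w_i y=\e_{i+1}x-\e_i^2\d_{i+1}y+\e_i y$ of Definition~\ref{D3.5}. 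Each axiom thereby becomes a linear relation among composites of the operators $\d_i$~and~$\e_i$, to be established from the simplicial identities of Definition~\ref{D3.1}, using wherever necessary the defining conditions $\d_i(\,\cdot\,)=\d_{i+1}(\,\cdot\,)$ of the wedges involved. Besides Proposition~\ref{P3.6}, which supplies $\d_i(x\w_i y)=y$ and $\d_{i+2}(x\w_i y)=x$, I will use the derived face formula
$$\d_{i+1}(x\w_i y)=x-\e_i\d_{i+1}y+y,$$
obtained by applying $\d_{i+1}$ to the wedge formula together with $\d_{i+1}\e_i=\id$.

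I would dispose of axioms~(1) and~(2) first. For axiom~(1) the faces $\d_i$~and~$\d_{i+2}$ are exactly Proposition~\ref{P3.6}, while the cases $0\leq j<i$ and $i+3\leq j\leq m+1$ follow by applying $\d_j$ to the wedge formula and pushing it past the degeneracies by means of $\d_j\e_k=\e_{k-1}\d_j$ (for $j<k$) and $\d_j\e_k=\e_k\d_{j-1}$ (for $j>k+1$); the result reassembles into the wedge formula for $\d_j x\w_{i-1}\d_j y$, respectively $\d_{j-1}x\w_i\d_{j-1}y$. Axiom~(2) is immediate from the formula: with $u=\e_i\d_{i+1}x$ and $v=x$ one gets $u\w_i v=\e_{i+1}\e_i\d_{i+1}x-\e_i^2\d_{i+1}x+\e_i x$, and the identity $\e_{i+1}\e_i=\e_i^2$ collapses the first two terms, leaving $\e_i x$; the second equation of~(2) is dual.

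For the associativity- and interchange-type axioms~(3)--(7) the scheme is the same but the computations are longer. I would substitute the wedge formula (and, where nested wedges occur, the derived formula for $\d_{i+1}$ of a wedge) into both sides, reduce each to a normal form built from degeneracies applied to iterated faces of the given morphisms, and match the two normal forms using the simplicial identities and the inner definedness conditions. Axiom~(7), with its widely separated indices $i\leq j-3$, is the most mechanical, since the operators indexed near~$i$ commute with those indexed near~$j$. By Remark~\ref{R6.3} each of~(3), (4) and~(6) is an equality to be imposed whenever either side is defined, so for all of~(3)--(7) there is in addition a domain condition to reconcile: I would read off, for each side, the set of requirements $\d_i(\,\cdot\,)=\d_{i+1}(\,\cdot\,)$ that its wedges demand, simplify these with Proposition~\ref{P3.6} and the derived $\d_{i+1}$-formula, and verify that the resulting systems are equivalent, invoking the characterisation of $\im\w_i$ in Proposition~\ref{P3.8} (a morphism lies in $\im\w_i$ precisely when it annihilates every basis element containing $i,i+1,i+2$) where an image condition is more convenient than a face condition.

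The main obstacle is this last point: keeping the bookkeeping of definedness straight through the nested wedges of axioms~(5) and~(6), where the adjacent indices $i$~and~$i+1$ prevent the operators from commuting freely and where each outer wedge imposes a compatibility on the inner faces. I expect the cleanest route to be to compute, for each side separately, the full system of conditions forced by its wedges, collapse it using $\d_i(x\w_i y)=y$, $\d_{i+2}(x\w_i y)=x$ and the formula for $\d_{i+1}(x\w_i y)$, and check that the two systems coincide; once the domains agree, the equality of the two sides follows from the same linear computation in $\Z\Delta(-,n)$ as in the unconditional cases.
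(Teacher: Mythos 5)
Your proposal is sound and would go through: all the operations on $\Or(-,n)$ are restrictions of linear operations on $\Z\Delta(-,n)$, your derived formula $\d_{i+1}(x\w_i y)=x-\e_i\d_{i+1}y+y$ is correct, your verifications of axioms (1) and (2) are exactly right, and the remaining axioms do reduce, as you say, to linear identities plus a reconciliation of domains of definition. But the paper takes this brute-force route only for axioms (1) and (5) (for (5) it expands both sides to the same explicit sum, just as you propose). For axioms (2), (3), (4), (6) and (7) --- precisely the ones whose bookkeeping you identify as the main obstacle --- it uses a structural shortcut instead: by Proposition~\ref{P3.8}, an element of $\Or(-,n)$ lies in $\im\w_j$ if and only if it lies in $\im\e_j+\im\e_{j+1}$; so one first checks that the expression $B$ in question is a wedge in this sense, and then its two factors are forced to be $\d_{j+2}B$ and $\d_j B$, which are evaluated from the face formulas of axiom (1) without ever expanding the nested wedges. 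For example, for axiom (6) one checks that $B=A\w_i(w\w_{i+1}\d_i A)$ lies in $\im\e_{i+2}+\im\e_{i+3}$, whence $B=B'\w_{i+2}B''$ with $B'=\d_{i+4}B=\d_{i+3}A\w_i w$ and $B''=\d_{i+2}B=A$. This shortcut buys two things your route pays for by hand: the argument never needs the internal structure of $A$ (your expansion of axiom (6) must substitute for $A$ its full expression in terms of $x,y,z$), and the definedness of the wedge claimed on the other side falls out of the identification of the factors, so the separate matching of the two systems of conditions $\d_i(\,\cdot\,)=\d_{i+1}(\,\cdot\,)$ that you describe disappears. What your approach buys in exchange is uniformity and elementarity --- nothing beyond linearity and the simplicial identities --- at the cost of substantially heavier computation.
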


\begin{proof}
From Section~\ref{S3} we know that $\Or(-,n)$ is a simplicial
set and that it has wedge operations with the correct domains
and codomains.

It is straightforward to verify axiom~(1).

Next we verify axiom~(5). The existence of the expression on
one side is equivalent to the existence of the expression on
the other, because the existence of either expression is
equivalent to the truth of the equalities
$$\d_i x=\d_{i+1}y,\quad \d_i y=\d_{i+1}z.$$
Also, if the two expression do exist, they have the same value,
namely
$$\e_{i+3}\e_{i+2}x-\e_i^3\d_{i+1}y
 +\e_{i+3}\e_i y-\e_i^3\d_{i+1}z+\e_{i+1}\e_i z.$$

The remaining axioms all say that expressions of certain forms
are equal to wedges. They can be proved by using
Proposition~\ref{P3.8}, which says that an element of
$\Or(-,n)$ is in the image of~$\w_i$ if and only if it is in
$\im\e_i+\im\e_{i+1}$. For example, suppose that the expression
on the left of axiom~(6) is defined, and let
$$B=A\w_i(w\w_{i+1}\d_i A).$$
We find that $B\in\im\e_{i+2}+\im\e_{i+3}$; therefore $B$~has
the form $B'\w_{i+2}B''$. Using axiom~(1) we then find that
\begin{align*}
 &B'=\d_{i+4}B=\d_{i+3}A\w_i\d_{i+3}(w\w_{i+1}\d_i A)=\d_{i+3}A\w_i w,\\
 &B''=\d_{i+2}B=A;
\end{align*}
therefore $B=(\d_{i+3}A\w_i w)\w_{i+2}A$.

This completes the proof.
\end{proof}

\section{Consequences of the complicial identities} \label{S7}

We have shown in Theorem~\ref{T4.12} that $\Or(-,n)$ is
generated by the identity morphism~$\iota_n$. In
Section~\ref{S8} we will show that $\Or(-,n)$ is freely
generated by~$\iota_n$ subject to the complicial identities of
Definition~\ref{D6.1}; in other words, we will show that the
identities determine the entire structure. In order to do this,
we now consider consequences of the identities in general.

There are three parts to this section. In the first part (Propositions \ref{P7.1}--\ref{P7.3}) we find the domains of definition and most of the faces for the expressions $u\w_k^l v$ and $\L^r(u_{r-1},\ldots,u_0,v)$ of Notation~\ref{N4.9}. In the second part (Proposition~\ref{P7.4}) we find a sufficient condition for an expression $\L^r(u_{r-1},\ldots,u_0,v)$ to collapse to~$v$. The third part, concluding with Proposition~\ref{P7.11}, is designed to find conditions implying that an expression of the form $\L^r(u_{r-1},\ldots,u_0,\e_r^s w)$ is a wedge.

First we find the domains of definition and most of the faces for the expressions $u\w_k^l v$.

\begin{proposition} \label{P7.1}
An expression $u\w_k^l v$ in a set with complicial identities
exists if and only if $\d_k u=\d_{k+1}^l v$. If the expression
does exist, then
\begin{align*}
 &\d_i(u\w_k^l v)=\d_i u\w_{k-1}^l\d_i v\quad (i<k),\\
 &\d_k(u\w_k^l v)=v,\\
 &\d_{k+2}(u\w_k^1 v)=u,\\
 &\d_{i+1}(u\w_k^l v)=u\w_k^{l-1}\d_i v\quad
  (l>1,\ k<i\leq k+l),\\
 &\d_{i+1}(u\w_k^l v)=\d_{i-l+1}u\w_k^l\d_i v\quad (k+l<i).
\end{align*}
\end{proposition}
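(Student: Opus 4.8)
The plan is to argue by induction on~$l$, using the left-most decomposition
$$u\w_k^l v=(u\w_k\d_{k+1}^{l-1}v)\w_k^{l-1}v,$$
which follows directly from Notation~\ref{N4.9} by separating off the first factor $({}\w_k\d_{k+1}^{l-1}v)$ and observing that the remaining string of operators is precisely the one defining $\w_k^{l-1}$ with the same right argument~$v$. Throughout I would work only from axiom~(1) of Definition~\ref{D6.1} together with the simplicial identities; no use is made of the group structure of $\Z\Delta(-,n)$ or of the closed form in Proposition~\ref{P4.10}, since the claim must hold in an arbitrary set with complicial identities. The base case $l=1$ is immediate: $u\w_k^1 v=u\w_k v$ exists precisely when $\d_k u=\d_{k+1}v$, and the five displayed formulas reduce to the four face formulas of axiom~(1), with the formula for $\d_{k+2}$ accounting for the isolated case recorded as $\d_{k+2}(u\w_k^1 v)=u$.

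For the existence statement, write $u'=u\w_k\d_{k+1}^{l-1}v$. If $u\w_k^l v$ exists then in particular $u'$ is defined, forcing $\d_k u=\d_{k+1}\d_{k+1}^{l-1}v=\d_{k+1}^l v$; this is the forward direction. Conversely, if $\d_k u=\d_{k+1}^l v$ then $u'$ is defined, and axiom~(1) gives $\d_k u'=\d_k(u\w_k\d_{k+1}^{l-1}v)=\d_{k+1}^{l-1}v$. Hence the inductive hypothesis applies to $u'\w_k^{l-1}v$ and shows it to be defined, so $u\w_k^l v$ exists. The point is that the hypothesis $\d_k u'=\d_{k+1}^{l-1}v$ needed to feed the induction is produced automatically by axiom~(1).

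For the faces I would apply the inductive hypothesis to $u'\w_k^{l-1}v$ and rewrite each formula. The face $\d_k(u\w_k^l v)=\d_k(u'\w_k^{l-1}v)=v$ is immediate. For $i<k$ one computes $\d_i u'=\d_i u\w_{k-1}\d_i\d_{k+1}^{l-1}v$ from axiom~(1); converting $\d_i\d_{k+1}^{l-1}$ into $\d_k^{l-1}\d_i$ by the simplicial identities and then applying the decomposition once more reassembles the answer as $\d_i u\w_{k-1}^l\d_i v$. The deep faces $i>k+l$ are handled the same way: axiom~(1) gives $\d_{i-l+2}u'=\d_{i-l+1}u\w_k\d_{i-l+1}\d_{k+1}^{l-1}v$, one pushes $\d_{i-l+1}$ through $\d_{k+1}^{l-1}$ to obtain $\d_{k+1}^{l-1}\d_i v$, and the decomposition produces $\d_{i-l+1}u\w_k^l\d_i v$. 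Finally, for $k<i\le k+l$ the interior faces come from the appropriate face formula of the inductive hypothesis together with the observation that $\d_{k+1}^{l-2}\d_i v=\d_{k+1}^{l-1}v$ when $k+1\le i\le k+l-1$, while the boundary value $i=k+l$ must be split off and instead uses $\d_{k+2}(x\w_k y)=x$ to turn $\d_{i-l+2}u'$ into~$u$; both sub-ranges then yield $u\w_k^{l-1}\d_i v$.

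The conceptual skeleton is thus short: a single recursion, fed entirely by axiom~(1). The main obstacle is the index bookkeeping with the simplicial identities—verifying the three reductions $\d_i\d_{k+1}^{l-1}=\d_k^{l-1}\d_i$ (for $i<k$), $\d_{k+1}^{l-2}\d_i=\d_{k+1}^{l-1}$ (for $k+1\le i\le k+l-1$), and $\d_{i-l+1}\d_{k+1}^{l-1}=\d_{k+1}^{l-1}\d_i$ (for $i>k+l$)—and, above all, matching the index ranges of the inductive hypothesis to the ranges claimed here, where the boundary value $i=k+l$ is governed by the top face formula rather than the middle one.
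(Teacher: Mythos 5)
Your proof is correct and takes essentially the same route as the paper's: an induction on $l$ driven by Definition~\ref{D6.1}(1) and the simplicial identities, the only (inessential) difference being that you peel off the first operator, $u\w_k^l v=(u\w_k\d_{k+1}^{l-1}v)\w_k^{l-1}v$, where the paper peels off the last, $u\w_k^l v=(u\w_k^{l-1}\d_{k+1}v)\w_k v$. Your write-up in fact supplies the index bookkeeping (the reductions of iterated faces and the split of the range $k<i\leq k+l$ at the boundary value $i=k+l$) that the paper leaves entirely to the reader.
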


\begin{proof}
Recall from Notation~\ref{N4.9} that $u\w_k^1 v=u\w_k v$ and that
$$u\w_k^l v=(u\w_k^{l-1}v)\w_k v$$
for $l>1$. Recall also from Definition~\ref{D6.1} that a wedge $x\w_k y$ is defined if and only if $\d_k x=\d_{k+1}y$, and that $\d_k(x\w_k y)=y$ whenever $x\w_k y$ is defined. It follows from this that $u\w_k^l v$ is defined if and only if $\d_k
u=\d_{k+1}^l v$. The formulae for $\d_i(u\w_k^l v)$ follow by induction on~$l$ from Definition~\ref{D6.1}(1).
\end{proof}

The domains of definition for the expressions $\L^r(u_{r-1},\ldots,u_0,v)$ are as follows.

\begin{proposition} \label{P7.2}
An expression $\L^r(u_{r-1},\ldots,u_0,v)$ exists in a set with
complicial identities if and only
$$\d_p u_p=\L^p(u_{p-1},\ldots,u_0,\d_{p+1}^{r-p}v)$$
for $0\leq p<r$.
\end{proposition}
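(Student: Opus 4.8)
The plan is to exploit the inside-out description of the operator $\L^r$ recorded in Proposition~\ref{P5.3}: writing $v_0=v$, $w_p=u_{p-1}\w_{p-1}^{r-p+1}v_{p-1}$ and $v_p=\d_p w_p$ for $1\le p\le r$, we have $\L^r(u_{r-1},\ldots,u_0,v)=v_r$. Reading this construction from the inside out, $v_p$ is defined precisely when $v_{p-1}$ is defined and the iterated wedge $w_p$ exists, and by Proposition~\ref{P7.1} the latter happens exactly when $\d_{p-1}u_{p-1}=\d_p^{r-p+1}v_{p-1}$. Reindexing by $q=p-1$, the expression $\L^r(u_{r-1},\ldots,u_0,v)$ exists if and only if the equations $\d_q u_q=\d_{q+1}^{r-q}v_q$ hold for $0\le q<r$, each equation being verified in the presence of the previous ones, which guarantee that $v_q$ is defined. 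At the top level I would run the whole argument as an induction on~$r$, so that the lower expressions $\L^q$ appearing in the conditions are themselves governed by the proposition at smaller values and their definedness is handled automatically.

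It therefore suffices to identify $\d_{q+1}^{r-q}v_q$ with the right-hand side of the asserted condition, that is, to prove the key formula
$$\d_{q+1}^{r-q}v_q=\L^q(u_{q-1},\ldots,u_0,\d_{q+1}^{r-q}v).$$
I would establish this by computing the iterated high face $\d_{q+1}^{r-q}$ of $v_q=\d_q w_q$ using only Proposition~\ref{P7.1} and the simplicial identities. Concretely, the faces $\d_{q+1}^{r-q}$ commute past the outer $\d_q$ (each $\d_{q+1}$ becoming a $\d_{q+2}$), and the resulting faces $\d_{q+2}^{r-q}$ then strip the wedge power of $w_q=u_{q-1}\w_{q-1}^{r-q+1}v_{q-1}$ down from $r-q+1$ to~$1$ by repeated use of the identity $\d_{i+1}(u\w_k^l v)=u\w_k^{l-1}\d_i v$. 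This reduces $\d_{q+1}^{r-q}v_q$ to $\d_q(u_{q-1}\w_{q-1}^{1}\,\d_{q+1}^{r-q}v_{q-1})$, which matches the outermost layer of $\L^q(u_{q-1},\ldots,u_0,\d_{q+1}^{r-q}v)$ exactly when the inner factors agree.

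The main obstacle is that this reduction does not close under a naive induction on~$q$: after a single step one must understand $\d_{q+1}^{r-q}v_{q-1}$, i.e.\ a face at index $q+1$ of an intermediate value built from wedges of index at most $q-1$, whereas a hypothesis stated only for the ``matching'' index would describe faces at index~$q$. I would therefore strengthen the statement before inducting. Letting $v_p^{(N)}(w)$ denote the $p$-th intermediate value of the construction with base~$w$ and exponent $N-p+1$ at layer~$p$ (so $v_q=v_q^{(r)}(v)$ and $\L^q(u_{q-1},\ldots,u_0,w)=v_q^{(q)}(w)$), the formula to prove is
$$\d_{c+1}^{\,j}\,v_p^{(N)}(v)=v_p^{(N-j)}(\d_{c+1}^{\,j}v)\qquad(c\ge p,\ c+j\le N).$$
This is provable by induction on~$p$: the commutation and power-stripping described above convert the claim for~$p$ into the claim for~$p-1$ at the same $c$ and~$j$, and the constraints $c\ge p$ and $c+j\le N$ are exactly what is needed to license every application of the face formula of Proposition~\ref{P7.1} (and to ensure that the wedge expressions it produces are genuinely defined, so that definedness propagates along with equality). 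The desired key formula is then the special case $p=q$, $c=q$, $N=r$, $j=r-q$, while the matching of inner factors in the previous paragraph is the case $p=q-1$, $c=q$, which is precisely the instance requiring $c$ strictly larger than~$p$.
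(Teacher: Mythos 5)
Your proposal is correct and follows essentially the same route as the paper: the same intermediate values $v_p$, the same reduction of existence via Proposition~\ref{P7.1} to the conditions $\d_p u_p=\d_{p+1}^{r-p}v_p$, and the same key formula $\d_{p+1}^{r-p}v_p=\L^p(u_{p-1},\ldots,u_0,\d_{p+1}^{r-p}v)$ proved by pushing the high faces through the wedge layers. Your strengthened induction hypothesis $\d_{c+1}^{\,j}v_p^{(N)}(v)=v_p^{(N-j)}(\d_{c+1}^{\,j}v)$ is just the iterated form of the paper's one-layer operator identity $\d_{p+1}^{r-p}\d_q(u_{q-1}\w_{q-1}^{r-q+1}{})=\d_q(u_{q-1}\w_{q-1}^{p-q+1}{})\d_{p+1}^{r-p}$ for $0<q\leq p$, so the two arguments coincide up to bookkeeping.
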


\begin{proof}
For $0\leq p\leq r$ let
$$v_p
 =\d_p(u_{p-1}\w_{p-1}^{r-p+1}{})\d_{p-1}(u_{p-2}\w_{p-2}^{r-p+2}{})\ldots
 \d_1(u_0\w_0^r{})v,$$
so that $\L^r(u_{r-1},\ldots,u_0,v)=v_r$. For $0\leq p<r$, by
Proposition~\ref{P7.1}, $v_{p+1}$~exists if and only if $\d_p
u_p=\d_{p+1}^{r-p}v_p$; it therefore suffices to show that
$$\d_{p+1}^{r-p}v_p=\L^p(u_{p-1},\ldots,u_0,\d_{p+1}^{r-p}v).$$
But for $0<q\leq p$ it follows from Proposition~\ref{P7.1} that
$$\d_{p+1}^{r-p}\d_q(u_{q-1}\w_{q-1}^{r-q+1}{})
 =\d_q\d_{p+2}^{r-p}(u_{q-1}\w_{q-1}^{r-q+1}{})
 =\d_q(u_{q-1}\w_{q-1}^{p-q+1}{})\d_{p+1}^{r-p},$$
and this gives the result.
\end{proof}

The faces of the expressions $\L^r(u_{r-1},\ldots,u_0,v)$ are as follows.

\begin{proposition} \label{P7.3}
In a set with complicial identities the following equalities
are valid whenever their left sides are defined:
\begin{align*}
 &\d_i\L^r(u_{r-1},\ldots,u_0,v)
 =\L^{r-1}
 (\d_i u_{r-1},\ldots,\d_i u_{i+1},u_{i-1},\ldots,u_0,\d_i v)\quad
 (i<r),\\
 &\d_0\L^0(v)=\d_0 v,\\
 &\d_r\L^r(u_{r-1},\ldots,u_0,v)=\d_r u_{r-1}\quad (r>0),\\
 &\d_i\L^r(u_{r-1},\ldots,u_0,v)
  =\L^r(\d_i u_{r-1},\d_{i-1}u_{r-2},\ldots,\d_{i-r+1}u_0,\d_i v)\quad
  (i>r).
\end{align*}
\end{proposition}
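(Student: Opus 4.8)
The plan is to evaluate $\d_i\L^r(u_{r-1},\ldots,u_0,v)$ by pushing the leading face operator~$\d_i$ rightwards through the defining word of Notation~\ref{N4.9}, namely through
$$\d_r(u_{r-1}\w_{r-1}^1{})\,\d_{r-1}(u_{r-2}\w_{r-2}^2{})\cdots\d_1(u_0\w_0^r{})$$
applied to~$v$. At each stage the travelling face meets either a bare face~$\d_p$, which it passes by a simplicial identity, or a wedge operator $(u_p\w_p^{r-p}{})$, which it passes by the relevant clause of Proposition~\ref{P7.1}; every such move deposits a settled operator on the left and updates the travelling index. Formula~(2) needs no argument since $\L^0(v)=v$. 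For the other three formulas I would run an induction on~$r$ (equivalently, on the length of the word), treating the cases $i<r$, $i=r$ and $i>r$ separately; Proposition~\ref{P7.2} guarantees that all the intermediate expressions occurring are defined whenever the given left side is.

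For $i>r$ I would start from $\d_i\d_r=\d_r\d_{i+1}$, so that the travelling face enters as~$\d_{i+1}$ with~$\d_r$ settled. Its index always exceeds $k+l$ at each wedge operator it meets, so the applicable clause is $\d_{j+1}(u\w_k^l w)=\d_{j-l+1}u\w_k^l\d_j w$; this applies a face to~$u_p$, preserves the exponent, and lowers the travelling index by one. Tracking the resulting shifts reproduces exactly $\L^r(\d_i u_{r-1},\d_{i-1}u_{r-2},\ldots,\d_{i-r+1}u_0,\d_i v)$. For $i=r$ the identity $\d_r\d_r=\d_r\d_{r+1}$ sends~$\d_{r+1}$ inwards with~$\d_r$ settled, and at the top wedge operator this is the clause $\d_{k+2}(u\w_k^1 w)=u$, which collapses the entire tail to~$u_{r-1}$; only the settled~$\d_r$ survives, giving $\d_r u_{r-1}$.

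The case $i<r$ is where the real work lies. The travelling face enters as~$\d_i$, and for each level $p>i$ it meets $(u_p\w_p^{r-p}{})$ with $i<p=k$, so Proposition~\ref{P7.1} gives $\d_i(u\w_k^l w)=\d_i u\w_{k-1}^l\d_i w$, lowering the wedge index and keeping the exponent; together with the interleaved simplicial identities this assembles the initial block of $\L^{r-1}$ carrying the arguments $\d_i u_{r-1},\ldots,\d_i u_{i+1}$. At level $p=i$ the face meets $(u_i\w_i^{r-i}{})$ in the clause $\d_i(u_i\w_i^{r-i}w)=w$, deleting both~$u_i$ and that operator. The main obstacle is the transition immediately afterwards: carrying~$\d_i$ past the next interleaved~$\d_{i+1}$ produces a doubled face~$\d_i\d_i$, which confronts $(u_{i-1}\w_{i-1}^{r-i+1}{})$ as the apparently undefined composite face $\d_{k+1}(u\w_k^l\cdots)$. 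This is resolved by first rewriting $\d_i\d_i=\d_i\d_{i+1}$ and then applying the clause $\d_{i'+1}(u\w_k^l w)=u\w_k^{l-1}\d_{i'}w$ at $i'=k+1$, which drops the exponent by exactly one. That single drop is precisely what reconciles the exponents of~$\L^r$ (each one larger) with those of the target~$\L^{r-1}$, and the reduction then runs off through the levels $p<i$, leaving $u_{i-1},\ldots,u_0$ untouched and terminating in $\d_i v$. The outcome is $\L^{r-1}(\d_i u_{r-1},\ldots,\d_i u_{i+1},u_{i-1},\ldots,u_0,\d_i v)$.

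The one point demanding care throughout is to confirm that the travelling index and the wedge exponents stay inside the ranges where the clauses of Proposition~\ref{P7.1} are valid, so that no genuinely undefined face is ever needed. The analysis above shows that the only apparent exception, the doubled face arising in the case $i<r$, is always converted by the simplicial identity $\d_i\d_i=\d_i\d_{i+1}$ into the covered clause at $i'=k+1$, and a routine check of the inequalities at each level confirms that everything else lands in a permitted clause.
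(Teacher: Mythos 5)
Your proposal is correct and takes essentially the same approach as the paper: Steiner's proof of Proposition~\ref{P7.3} is precisely this travelling-face computation, pushing $\d_i$ through the word $\d_r(u_{r-1}\w_{r-1}^1{})\d_{r-1}(u_{r-2}\w_{r-2}^2{})\ldots\d_1(u_0\w_0^r{})v$ by alternating simplicial identities with the clauses of Proposition~\ref{P7.1}, with the same case division $i<r$, $i=r$, $i>r$. Your $i=r$ and $i>r$ cases match the paper's computations exactly; also, the paper handles definedness implicitly (each rewrite of the already-defined left-hand word preserves definedness and value), so the appeal to Proposition~\ref{P7.2} is dispensable.

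The one place your narration drifts from what the forced computation does is the bookkeeping after level~$i$ in the case $i<r$. First, the doubled face that confronts $(u_{i-1}\w_{i-1}^{r-i+1}{})$ arises when the travelling $\d_i$ meets the \emph{interleaved} $\d_i$, giving $\d_i\d_i$; the doubled face created by carrying $\d_i$ past the interleaved $\d_{i+1}$ is the earlier one, whose inner factor performs the clause-2 deletion of $(u_i\w_i^{r-i}{})$. Second, and more substantively, the exponent drop is not a single event: it must and does recur at \emph{every} level $p\le i$. At each such level the travelling $\d_i$ meets $\d_p$, the identity $\d_i\d_p=\d_p\d_{i+1}$ sends $\d_{i+1}$ inward, and the clause $\d_{i+1}(u\w_k^l w)=u\w_k^{l-1}\d_i w$ applies (here $k=p-1<i\le k+l=r$ and $l=r-p+1>1$), dropping that level's exponent by one and restoring the travelling face $\d_i$; this is the paper's third displayed computation in this case. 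A literal single drop would leave the exponents at the levels below one too large, so the outcome would not even be a well-formed $\L^{r-1}$-expression; it is the per-level drop that matches all the exponents of $\L^r$, each one larger, against those of $\L^{r-1}$. Since at each of these levels no other clause of Proposition~\ref{P7.1} is applicable, executing your plan literally forces these rewrites, so the defect is one of description rather than substance.
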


\begin{proof}
Suppose first that $i<r$. Using Proposition~\ref{P7.1}, for
$r\geq p>i+1$ we get
$$\d_i\d_p(u_{p-1}\w_{p-1}^{r-p+1}{})
 =\d_{p-1}\d_i(u_{p-1}\w_{p-1}^{r-p+1}{})
 =\d_{p-1}(\d_i u_{p-1}\w_{p-2}^{r-p+1}{})\d_i,$$
we then get
$$\d_i\d_{i+1}(u_i\w_i^{r-i}{})=\d_i\d_i(u_i\w_i^{r-i}{})=\d_i,$$
and for $i\geq p>0$ we get
$$\d_i\d_p(u_{p-1}\w_{p-1}^{r-p+1}{})
 =\d_p\d_{i+1}(u_{p-1}\w_{p-1}^{r-p+1}{})
 =\d_p(u_{p-1}\w_{p-1}^{r-p}{})\d_i;$$
therefore
$$\d_i\L^r(u_{r-1},\ldots,u_0,v)
 =\L^{r-1}
 (\d_i u_{r-1},\ldots,\d_i u_{i+1},u_{i-1},\ldots,u_0,\d_i v).$$

It is obvious that $\d_0\L^0(v)=\d_0 v$, because $\L^0(v)=v$.

For $r>0$ the equality $\d_r\L^r(u_{r-1},\ldots,u_0,v)=\d_r
u_{r-1}$ is immediate from Proposition~\ref{P7.1}.

Finally, suppose that $i>r$. For $r\geq p>0$ it follows from
Proposition~\ref{P7.1} that
$$\d_i\d_p(u_{p-1}\w_{p-1}^{r-p+1}{})
 =\d_p\d_{i+1}(u_{p-1}\w_{p-1}^{r-p+1}{})
 =\d_p(\d_{i-r+p}u_{p-1}\w_{p-1}^{r-p+1}{})\d_i;$$
therefore
$$\d_i\L^r(u_{r-1},\ldots,u_0,v)
  =\L^r(\d_i u_{r-1},\d_{i-1}u_{r-2},\ldots,\d_{i-r+1}u_0,\d_i v).$$
\end{proof}

Next we give the collapsing result.

\begin{proposition} \label{P7.4}
If $x$~is an element of dimension at least~$r$ in a set with
complicial identities, then
$$\L^r(\e_{r-1}\d_r x,\ \e_{r-2}\d_{r-1}^2 x,\ \ldots,\ \e_0\d_1^r x,\ x)=x.$$
\end{proposition}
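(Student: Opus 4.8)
The plan is to unwind the iterated construction $\L^r$ and show that, with these particular degenerate arguments, every stage collapses back to~$x$. Following the proof of Proposition~\ref{P7.2}, set $u_p=\e_p\d_{p+1}^{r-p}x$ for $0\le p<r$ (these are defined because $\dim x\ge r$), put $v_0=x$, and for $0\le p<r$ let
\[
v_{p+1}=\d_{p+1}(u_p\w_p^{r-p}v_p),
\]
so that $\L^r(u_{r-1},\ldots,u_0,x)=v_r$. First I would prove by induction on~$p$ that $v_p=x$ for all $0\le p\le r$; the case $p=r$ is then exactly the assertion of the proposition. The base case $v_0=x$ is immediate.

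The inductive step rests on a single identity, valid in any set with complicial identities whenever its left-hand side is defined:
\[
(\e_k\d_{k+1}^l y)\w_k^l y=\e_k y. \qquad (\ast)
\]
Granting $(\ast)$, suppose $v_p=x$. Then $\d_p u_p=\d_p\e_p\d_{p+1}^{r-p}x=\d_{p+1}^{r-p}x=\d_{p+1}^{r-p}v_p$, so by Proposition~\ref{P7.1} the iterated wedge $u_p\w_p^{r-p}v_p$ is defined; applying $(\ast)$ with $k=p$, $l=r-p$ and $y=x$ collapses it to $\e_p x$, whence $v_{p+1}=\d_{p+1}\e_p x=x$ by the simplicial identity $\d_{p+1}\e_p=\id$. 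This argument simultaneously verifies that each wedge occurring in $\L^r(u_{r-1},\ldots,u_0,x)$ is legitimately formed, so that no separate appeal to the existence criterion of Proposition~\ref{P7.2} is required.

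The heart of the matter, and the step I expect to be the main obstacle, is the identity $(\ast)$, which is a degenerate-argument generalisation of axiom~(2) of Definition~\ref{D6.1}. I would establish it by induction on~$l$. For $l=1$ it is precisely axiom~(2). For $l>1$, writing $u=\e_k\d_{k+1}^l y=\e_k\d_{k+1}^{l-1}(\d_{k+1}y)$ and using the recursion
\[
u\w_k^l y=(u\w_k^{l-1}\d_{k+1}y)\w_k y
\]
of Notation~\ref{N4.9}, the inductive hypothesis applied to $\d_{k+1}y$ in place of~$y$ gives $u\w_k^{l-1}\d_{k+1}y=\e_k\d_{k+1}y$; the remaining outer wedge $(\e_k\d_{k+1}y)\w_k y$ then equals $\e_k y$ by axiom~(2) once more. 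The only delicate point is the face bookkeeping: at each stage one must check, using the face formulae of Proposition~\ref{P7.1} together with $\d_k\e_k=\id$, that $\d_k$ of the inner factor agrees with the relevant boundary of the outer factor, so that the wedges are defined and the inductive hypothesis genuinely applies.
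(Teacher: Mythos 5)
Your proof is correct and takes essentially the same route as the paper: the paper's terse proof ("repeated applications of Definition~\ref{D6.1}(2) show that $\d_{p+1}(\e_p\d_{p+1}^{r-p}x\w_p^{r-p}x)=\d_{p+1}\e_p x=x$") is exactly your identity $(\ast)$, obtained by induction on~$l$ from axiom~(2), followed by the same stage-by-stage collapse $v_{p+1}=\d_{p+1}\e_p x=x$. You have merely made explicit the induction and the well-definedness checks that the paper leaves implicit.
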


\begin{proof}
Repeated applications of Definition~\ref{D6.1}(2) show that for
$0\leq p<r$ we have
$$\d_{p+1}(\e_p\d_{p+1}^{r-p}x\w_p^{r-p}x)=\d_{p+1}\e_p x=x.$$
The result follows.
\end{proof}

The rest of this section is designed to prove Proposition~\ref{P7.11}, giving conditions for an expression of the form $\L^r(u_{r-1},\ldots,u_0,\e_r^s w)$ to be a wedge. We use a sequence of lemmas showing that various constituent expressions are wedges.

\begin{lemma} \label{L7.5}
Let $u\w_k v$ be a wedge in a set with complicial identities.
If $i=k-1$ or $i=k$ and if $u,v\in\im\w_i$ then $\d_{k+1}(u\w_k
v)\in\im\w_i$.
\end{lemma}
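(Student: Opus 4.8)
The plan is to produce $\d_{k+1}(u\w_k v)$ explicitly as a wedge for the operation $\w_i$; since $\im\w_i$ is by definition the collection of all such wedges, that is all that is required. Throughout I would lean on Definition~\ref{D6.1}(1), which recovers the two factors of a wedge as faces: if $z=a\w_j b$ then $a=\d_{j+2}z$ and $b=\d_j z$. In particular the hypothesis $u,v\in\im\w_i$ lets me write
$$u=\d_{i+2}u\w_i\d_i u,\qquad v=\d_{i+2}v\w_i\d_i v,$$
while the hypothesis that $u\w_k v$ exists supplies $\d_k u=\d_{k+1}v$. The engine for both cases is the associativity axiom~(5), applied in opposite directions according to whether $i=k$ or $i=k-1$.

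For $i=k$, I would use $\d_k u=\d_{k+1}v$ to replace the second factor $\d_k u$ of $u$, obtaining $u=\d_{k+2}u\w_k\d_{k+1}v$, and then substitute $v=\d_{k+2}v\w_k\d_k v$. After this bookkeeping $u\w_k v$ becomes literally the left-hand side of axiom~(5) (taken at $i=k$). Axiom~(5) rewrites it as a $\w_{k+1}$-wedge, and applying $\d_{k+1}$ to that wedge returns its second factor by Definition~\ref{D6.1}(1), which is visibly a $\w_k$-wedge. Thus $\d_{k+1}(u\w_k v)\in\im\w_k=\im\w_i$.

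For $i=k-1$ I would instead feed the two decompositions in so that $u\w_k v$ matches the \emph{right}-hand side of axiom~(5) (taken at $i=k-1$); the identification of the relevant factor again uses $\d_k u=\d_{k+1}v$ together with Definition~\ref{D6.1}(1). Axiom~(5) then rewrites $u\w_k v$ as a $\w_{k-1}$-wedge whose \emph{first} factor is itself a $\w_{k-1}$-wedge. Since $\d_{k+1}=\d_{(k-1)+2}$ extracts precisely that first factor (Definition~\ref{D6.1}(1)), I obtain $\d_{k+1}(u\w_k v)$ as a $\w_{k-1}$-wedge, so it lies in $\im\w_{k-1}=\im\w_i$.

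I do not expect a serious obstacle once the matching is seen: each case is just a substitution followed by one application of axiom~(5) and one application of a face formula from Definition~\ref{D6.1}(1). The one point demanding care is the index bookkeeping that makes $u\w_k v$ coincide exactly with one designated side of axiom~(5). The ``whenever either side is defined'' proviso is then harmless, because in both cases the designated side is the given wedge $u\w_k v$, whose existence is assumed; existence of the other side, and hence the legitimacy of the rewriting, follows automatically.
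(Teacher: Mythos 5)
Your proposal is correct and is essentially the paper's own proof: the paper likewise decomposes $u$ and $v$ as $\w_i$-wedges via Definition~\ref{D6.1}(1), uses $\d_k u=\d_{k+1}v$ to make $u\w_k v$ coincide with one side of axiom~(5) (the left side when $i=k$, the right side when $i=k-1$), and then extracts $\d_{k+1}(u\w_k v)$ as a visible wedge by the face formulae. There are no gaps.
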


\begin{proof}
Suppose that $u=u'\w_{k-1}u''$ and $v=v'\w_{k-1}v''$. The
existence of the wedge $u\w_k v$ implies that
$v'=\d_{k+1}v=\d_k u=\d_k(u'\w_{k-1}u'')$; therefore
$$u\w_k v
 =(u'\w_{k-1}u'')\w_k[\d_k(u'\w_{k-1}u'')\w_{k-1}v''].$$
It now follows from Definition~\ref{D6.1}(5) that
$$\d_{k+1}(u\w_k v)=u'\w_{k-1}\d_k(u''\w_{k-1}v'').$$

If $u=u'\w_k u''$ and $v=v'\w_k v''$, then it similarly follows
that $u''=\d_{k+1}(v'\w_k v'')$ and that
$$\d_{k+1}(u\w_k v)=\d_{k+1}(u'\w_k v')\w_k v''.$$
\end{proof}

\begin{lemma} \label{L7.6}
In a set with complicial identities, let $A$ be an element of
the form $x\w_{i+1}y$ or $y\w_i z$ or
$\d_{i+2}[(x\w_{i+1}y)\w_{i+1}(y'\w_i z)]$. Then elements of
the form $A\w_i(w\w_{i+1}v)$ are in $\im\w_{i+2}$, and elements
of the form $(u\w_i w)\w_{i+2}A$ are in $\im\w_i$.
\end{lemma}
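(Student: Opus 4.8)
The plan is to reduce all three forms of $A$ to Definition~\ref{D6.1}(6), which is exactly the identity relating the two expressions in the statement. First I would pin down the free parameters using the definedness conditions. If $A\w_i(w\w_{i+1}v)$ is defined, then the defining equation for the outer wedge reads $\d_i A=\d_{i+1}(w\w_{i+1}v)$, and since $\d_{i+1}(w\w_{i+1}v)=v$ by Definition~\ref{D6.1}(1) this forces $v=\d_i A$; dually, if $(u\w_i w)\w_{i+2}A$ is defined, then $\d_{i+2}(u\w_i w)=\d_{i+3}A$ together with $\d_{i+2}(u\w_i w)=u$ forces $u=\d_{i+3}A$. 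Hence the two expressions in the lemma are precisely $A\w_i(w\w_{i+1}\d_i A)$ and $(\d_{i+3}A\w_i w)\w_{i+2}A$, the two sides of axiom~(6).

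For $A$ of the third form this finishes the argument immediately: axiom~(6) asserts that these two expressions are equal whenever either is defined, and as the first is manifestly a $\w_i$-wedge and the second a $\w_{i+2}$-wedge, both membership claims follow at once.

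The heart of the proof is then the observation that the first two forms are degenerate instances of the third. Using Definition~\ref{D6.1}(2) to rewrite the relevant degeneracies as wedges, I would verify the identities
\begin{align*}
 x\w_{i+1}y&=\d_{i+2}[(x\w_{i+1}y)\w_{i+1}(y\w_i\e_i\d_i y)],\\
 y\w_i z&=\d_{i+2}[(\e_{i+1}\d_{i+2}y\w_{i+1}y)\w_{i+1}(y\w_i z)].
\end{align*}
Here $y\w_i\e_i\d_i y=\e_{i+1}y$ and $\e_{i+1}\d_{i+2}y\w_{i+1}y=\e_{i+1}y$ hold by Definition~\ref{D6.1}(2) (which in particular guarantees that these auxiliary wedges exist), after which the right unit law $(x\w_{i+1}y)\w_{i+1}\e_{i+1}y=\e_{i+2}(x\w_{i+1}y)$ and the left unit law $\e_{i+1}y\w_{i+1}(y\w_i z)=\e_{i+1}(y\w_i z)$—again instances of Definition~\ref{D6.1}(2)—collapse the bracketed expressions to degeneracies. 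The simplicial identities $\d_{i+2}\e_{i+2}=\d_{i+2}\e_{i+1}=\id$ then yield the two displayed equalities. Both right-hand sides are of the third form, so the case already treated applies to them verbatim.

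The step needing the most care is the conceptual one of recognising forms~(1) and~(2) as the third form with an inserted identity wedge; once this is seen, everything reduces to axiom~(6) and the unit laws of Definition~\ref{D6.1}(2), and no new identity is required. The only remaining bookkeeping is to confirm that the definedness hypothesis supplied by the lemma, after the substitutions $v=\d_i A$ and $u=\d_{i+3}A$, is exactly the condition ``either side is defined'' demanded by axiom~(6), so that the axiom is legitimately applicable in each case.
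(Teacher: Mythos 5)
Your proof is correct and follows essentially the same route as the paper's: pin down the free parameters $v=\d_i A$ and $u=\d_{i+3}A$ by definedness, reduce the first two forms of $A$ to the third via the unit laws of Definition~\ref{D6.1}(2), and then invoke axiom~(6) so that both membership claims drop out at once. The one point worth making explicit is that in the third form one must first note $y'=\d_{i+2}(y'\w_i z)=\d_{i+1}(x\w_{i+1}y)=y$ (forced by definedness of the inner wedge) before axiom~(6) literally applies; the paper spells out this one-line observation, and it fits naturally into your own definedness bookkeeping.
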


\begin{proof}
Note first that $A$~must be $m$-dimensional with $0\leq i\leq
m-3$, because of the existence of $A\w_i(w\w_{i+1}v)$ or of
$(u\w_i w)\w_{i+2}A$.

Suppose that $B$~has the form $A\w_i(w\w_{i+1}v)$; then $\d_i
A=\d_{i+1}(w\w_{i+1}v)=v$, so
$$B=A\w_i(w\w_{i+1}\d_i A).$$
Because of Definition~\ref{D6.1}(6), to show that $B\in\im\w_{i+2}$ it suffices to show that
$A$~is of the form
$$\d_{i+2}[(x\w_{i+1}y)\w_{i+1}(y\w_i z)],$$
and we do this as follows: if $A=x\w_{i+1}y$ then
\begin{align*}
 A
 &=\d_{i+2}\e_{i+2}A\\
 &=\d_{i+2}(A\w_{i+1}\e_{i+1}\d_{i+1}A)\\
 &=\d_{i+2}(A\w_{i+1}\e_{i+1}y)\\
 &=\d_{i+2}[(x\w_{i+1}y)\w_{i+1}(y\w_i\e_i\d_i y)];
\end{align*}
if $A=y\w_i z$ then similarly
$$A=\d_{i+2}[(\e_{i+1}\d_{i+2}y\w_{i+1}y)\w_{i+1}(y\w_i z)];$$
if $A=\d_{i+2}[(x\w_{i+1}y)\w_{i+1}(y'\w_i z)]$ then
$y=\d_{i+1}(x\w_{i+1}y)=\d_{i+2}(y'\w_i z)=y'$, so
$$A=\d_{i+2}[(x\w_{i+1}y)\w_{i+1}(y\w_i z)].$$

A similar argument shows that elements of the form $(u\w_i w)\w_{i+2}A$ are in $\im\w_i$.
\end{proof}

\begin{lemma} \label{L7.7}
In a set with complicial identities, if $r\geq i+3$ then
$$\e_r(x\w_i y)=\e_{r-1}x\w_i\e_{r-1}y$$
whenever $x\w_i y$ is defined.
\end{lemma}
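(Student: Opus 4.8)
The plan is to prove the identity by induction on~$r$, reducing at each stage to the fact that a $\w_i$-wedge is determined by two of its faces. By Definition~\ref{D6.1}(1), if $z=u\w_i v$ then $u=\d_{i+2}z$ and $v=\d_i z$, so every $z\in\im\w_i$ satisfies $z=\d_{i+2}z\w_i\d_i z$. For $z=\e_r(x\w_i y)$ with $r\geq i+3$ we have $i<r$ and $i+2<r$, so the simplicial identities together with Definition~\ref{D6.1}(1) give $\d_{i+2}z=\e_{r-1}\d_{i+2}(x\w_i y)=\e_{r-1}x$ and $\d_i z=\e_{r-1}\d_i(x\w_i y)=\e_{r-1}y$. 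Thus it is enough to prove that $\e_r(x\w_i y)\in\im\w_i$: the two faces then force $\e_r(x\w_i y)=\e_{r-1}x\w_i\e_{r-1}y$, and one checks that the right side is defined precisely when $x\w_i y$ is.

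Write $w=x\w_i y$. By Definition~\ref{D6.1}(2) we have $\e_r w=w\w_{r-1}\e_{r-1}\d_{r-1}w$, and this will drive the induction. In the base case $r=i+3$ this reads $\e_{i+3}w=(x\w_i y)\w_{i+2}\e_{i+2}\d_{i+2}w$. A second application of Definition~\ref{D6.1}(2) rewrites $\e_{i+2}\d_{i+2}w$ as $\d_{i+2}w\w_{i+1}\e_{i+1}\d_{i+1}\d_{i+2}w$, which lies in $\im\w_{i+1}$. Hence $\e_{i+3}w$ is of the form $(u\w_i w')\w_{i+2}A$ with $A$ a $\w_{i+1}$-wedge, and Lemma~\ref{L7.6} gives $\e_{i+3}w\in\im\w_i$. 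By the reduction of the first paragraph, the identity holds for $r=i+3$.

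For the inductive step I would take $r\geq i+4$ and assume the identity for $r-1$. Since $r-1\geq i+3$, Definition~\ref{D6.1}(1) gives $\d_{r-1}w=\d_{r-2}x\w_i\d_{r-2}y$, and the inductive hypothesis applied to this $\w_i$-wedge yields $\e_{r-1}\d_{r-1}w=\e_{r-2}\d_{r-2}x\w_i\e_{r-2}\d_{r-2}y$. Substituting into $\e_r w=w\w_{r-1}\e_{r-1}\d_{r-1}w$ gives $\e_r w=(x\w_i y)\w_{r-1}(\e_{r-2}\d_{r-2}x\w_i\e_{r-2}\d_{r-2}y)$. Because $i\leq(r-1)-3$, the interchange axiom Definition~\ref{D6.1}(7) rewrites this as $(x\w_{r-2}\e_{r-2}\d_{r-2}x)\w_i(y\w_{r-2}\e_{r-2}\d_{r-2}y)$, and a final use of Definition~\ref{D6.1}(2) identifies the two factors as $\e_{r-1}x$ and $\e_{r-1}y$. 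This completes the induction.

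The hard part is the base case $r=i+3$: here the indices $i$ and $r-1=i+2$ differ by only~$2$, so the interchange axiom~(7) used in the inductive step does not apply and the wedge cannot be disentangled in the same way. This is precisely the gap that Lemma~\ref{L7.6} is built to fill, and the decisive observation is that $\e_{i+2}\d_{i+2}w$ is itself a $\w_{i+1}$-wedge, which brings $\e_{i+3}w$ into the form covered by that lemma. The remaining points---verifying that all the faces, degeneracies, and both forms of Definition~\ref{D6.1}(2) fall in their permitted index ranges---are routine checks with the simplicial identities.
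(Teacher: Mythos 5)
Your proof is correct and follows essentially the same route as the paper's: induction on $r$, with the base case $r=i+3$ handled by rewriting $\e_{i+3}(x\w_i y)=(x\w_i y)\w_{i+2}(x\w_{i+1}\e_{i+1}\d_{i+1}x)$ via Definition~\ref{D6.1}(2) and invoking Lemma~\ref{L7.6}, and the inductive step handled by the interchange axiom Definition~\ref{D6.1}(7). The only cosmetic difference is that you isolate the face-computation reduction (it suffices to show $\e_r(x\w_i y)\in\im\w_i$) as a preliminary step, which the paper instead carries out inline in the base case.
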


\begin{proof}
The proof is by induction on~$r$. We use
Definition~\ref{D6.1}(1) and~(2) repeatedly.

In the case $r=i+3$ we have
\begin{align*}
 \e_{i+3}(x\w_i y)
 &=(x\w_i y)\w_{i+2}\e_{i+2}\d_{i+2}(x\w_i y)\\
 &=(x\w_i y)\w_{i+2}\e_{i+2}x\\
 &=(x\w_i y)\w_{i+2}(x\w_{i+1}\e_{i+1}\d_{i+1}x).
\end{align*}
It follows from Lemma~\ref{L7.6} that $\e_{i+3}(x\w_i y)$
is in $\im\w_i$, and it then follows that
\begin{align*}
 \e_{i+3}(x\w_i y)
 &=\d_{i+2}\e_{i+3}(x\w_i y)\w_i\d_i\e_{i+3}(x\w_i y)\\
 &=\e_{i+2}\d_{i+2}(x\w_i y)\w_i\e_{i+2}\d_i(x\w_i y)\\
 &=\e_{i+2}x\w_i\e_{i+2}y.
\end{align*}

For $r>i+3$ it follows from the inductive hypothesis and
Definition~\ref{D6.1}(7) that
\begin{align*}
 \e_r(x\w_i y)
 &=(x\w_i y)\w_{r-1}\e_{r-1}\d_{r-1}(x\w_i y)\\
 &=(x\w_i y)\w_{r-1}\e_{r-1}(\d_{r-2}x\w_i\d_{r-2}y)\\
 &=(x\w_i y)\w_{r-1}(\e_{r-2}\d_{r-2}x\w_i\e_{r-2}\d_{r-2}y)\\
 &=(x\w_{r-2}\e_{r-2}\d_{r-2}x)\w_i(y\w_{r-2}\e_{r-2}\d_{r-2}y)\\
 &=\e_{r-1}x\w_i\e_{r-1}y.
\end{align*}

This completes the proof.
\end{proof}

\begin{lemma} \label{L7.8}
In a set with complicial identities, an element of the form
$$(u\w_i u')\w_k(v\w_i v')$$
is in $\im\w_i$ for $i\leq k$ and is in $\im\w_{i+1}$ for
$i\geq k-1$.
\end{lemma}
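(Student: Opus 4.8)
The plan is to prove each membership by a case analysis on the outer index~$k$ relative to~$i$, invoking the appropriate complicial identity in each band. Recall that showing an element~$z$ lies in $\im\w_j$ amounts to exhibiting $z$ as some $\w_j$-wedge; the trivial bands below do this literally, while the others rewrite $z$ into such a form using the axioms or the preceding lemma.

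For the claim that $A=(u\w_i u')\w_k(v\w_i v')$ lies in $\im\w_i$ when $i\leq k$, I would split into four bands according to $k=i$, $k=i+1$, $k=i+2$, and $k\geq i+3$. When $k=i$ the element is already a $\w_i$-wedge, so there is nothing to prove. When $k=i+1$ it has the form $(x\w_i y)\w_{i+1}c$ with $c=v\w_i v'$, so Definition~\ref{D6.1}(4) places it in $\im\w_i$. When $k=i+2$ it has the shape $(u\w_i w)\w_{i+2}A'$ with $w=u'$ and $A'=v\w_i v'$ a $\w_i$-wedge, so the second half of Lemma~\ref{L7.6} applies. Finally, when $k\geq i+3$ the interchange identity Definition~\ref{D6.1}(7), applied with lower index~$i$ and upper index~$k$, rewrites $A=(u\w_{k-1}v)\w_i(u'\w_{k-1}v')$, which is manifestly in $\im\w_i$.

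For the claim that $A\in\im\w_{i+1}$ when $i\geq k-1$, I would argue symmetrically in the four bands $k=i+1$, $k=i$, $k=i-1$, and $k\leq i-2$. When $k=i+1$ the element is already a $\w_{i+1}$-wedge. When $k=i$ it has the form $b\w_i(y\w_i z)$ with $b=u\w_i u'$, so Definition~\ref{D6.1}(3) yields the conclusion. When $k=i-1$ I would apply Lemma~\ref{L7.6} with its index shifted down by one: then $A$ has the form $A'\w_{i-1}(w\w_i v')$ with $A'=u\w_i u'$ of the admissible shape $x\w_i y$, and the first half of the shifted lemma gives $A\in\im\w_{i+1}$. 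When $k\leq i-2$, Definition~\ref{D6.1}(7) applies read from right to left, with lower index~$k$ and upper index $i+1$ (legitimate since $k\leq(i+1)-3$), rewriting $A=(u\w_k v)\w_{i+1}(u'\w_k v')\in\im\w_{i+1}$.

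The only subtle point, and where I expect the real work to sit, is the pair of boundary bands $k=i+2$ and $k=i-1$. These are precisely the cases in which the gap between the inner and outer indices is too small for the clean interchange law Definition~\ref{D6.1}(7) to apply, so neither membership can be read off directly; here one must fall back on Lemma~\ref{L7.6}, which was designed to bridge exactly this gap. Matching the given element to the hypotheses of that lemma---recognising $v\w_i v'$ (respectively $u\w_i u'$) as an admissible constituent, and carrying out the index shift in the $k=i-1$ band---is the step requiring care. The remaining bands are either immediate or routine applications of axioms~(3), (4) and~(7).
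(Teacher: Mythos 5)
Your proof is correct and follows essentially the same route as the paper: the paper's (much terser) proof performs exactly this case analysis, handling $i=k$ and $i=k-1$ as trivial or via axioms (3) and (4), the boundary bands $i=k-2$ and $i=k+1$ via Lemma~\ref{L7.6}, and the remaining bands $i\leq k-3$ and $i\geq k+2$ via axiom (7). Your identification of the Lemma~\ref{L7.6} bands as the ones where the interchange law fails and real work is needed matches the paper's design precisely.
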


\begin{proof}
For $i=k-1$ and for $i=k$ the results are trivial or are
contained in Definition \ref{D6.1}(3) and~(4); for $i=k-2$ and
for $i=k+1$ the results follow from Lemma~\ref{L7.6}; for
$i\leq k-3$ and for $i\geq k+2$ the results are contained in
Definition~\ref{D6.1}(7).
\end{proof}

\begin{lemma} \label{L7.9}
In a set with complicial identities, an element of the form
$$\d_{i+3}[u\w_{i+2}^l\d_{i+2}(u'\w_{i+1}^{l+1}v)]$$
is in $\im\w_i$ if $u$~and~$v$ are in $\im\w_i$.
\end{lemma}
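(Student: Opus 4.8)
The plan is to reduce the claim to the single assertion that the iterated wedge
$$Z=u\w_{i+2}^l\,\d_{i+2}(u'\w_{i+1}^{l+1}v)$$
lies in $\im\w_i$, and then to prove this by induction on~$l$. For the reduction, suppose $Z\in\im\w_i$, so that $Z=P\w_i Q$ with $P=\d_{i+2}Z$ and $Q=\d_i Z$. The outer face $\d_{i+3}Z$ is then computed from this level-$i$ wedge structure rather than from the level-$(i+2)$ one: by the last equation of Definition~\ref{D6.1}(1),
$$\d_{i+3}Z=\d_{i+3}(P\w_i Q)=\d_{i+2}P\w_i\d_{i+2}Q\in\im\w_i,$$
and this is exactly the element in question. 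Thus the role of the face $\d_{i+3}$, which is undetermined for the level-$(i+2)$ wedge, is tamed by passing to the level-$i$ wedge structure that $Z$ acquires once we know $Z\in\im\w_i$.

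The first substantial step is to recognise the inner element $B_l=\d_{i+2}(u'\w_{i+1}^{l+1}v)$ as being of the third form considered in Lemma~\ref{L7.6}. Using Notation~\ref{N4.9} to write $u'\w_{i+1}^{l+1}v=(u'\w_{i+1}^l\d_{i+2}v)\w_{i+1}v$, the left factor $u'\w_{i+1}^l\d_{i+2}v$ lies in $\im\w_{i+1}$ (as $l\geq 1$), the right factor is $v\in\im\w_i$, and the two are compatible because $\d_{i+1}(u'\w_{i+1}^l\d_{i+2}v)=\d_{i+2}v$ is also the $\d_{i+2}$-face of~$v$. Hence
$$B_l=\d_{i+2}[(x\w_{i+1}y)\w_{i+1}(y\w_i z)],\qquad y=\d_{i+2}v,\quad y\w_i z=v,$$
so Lemma~\ref{L7.6} is applicable to $B_l$.

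With this in hand I would induct on~$l$. When $l=1$ we have $Z=u\w_{i+2}B_1$; since $u\in\im\w_i$ and $B_1$ has the form just exhibited, Lemma~\ref{L7.6} gives $Z\in\im\w_i$. For $l\geq 2$, Notation~\ref{N4.9} peels off the outer wedge as
$$Z=(u\w_{i+2}^{l-1}\d_{i+3}B_l)\w_{i+2}B_l.$$
Here the simplicial identity $\d_{i+3}\d_{i+2}=\d_{i+2}\d_{i+4}$ together with the face formula of Proposition~\ref{P7.1} gives $\d_{i+3}B_l=\d_{i+2}(u'\w_{i+1}^l\d_{i+3}v)$, which is precisely the inner element for the $(l-1)$-instance of the lemma with $v$ replaced by $\d_{i+3}v$. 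Since $\d_{i+3}v\in\im\w_i$ by Definition~\ref{D6.1}(1), the inductive hypothesis shows $u\w_{i+2}^{l-1}\d_{i+3}B_l\in\im\w_i$; as its partner $B_l$ again has Lemma~\ref{L7.6}'s third form, a further application of Lemma~\ref{L7.6} yields $Z\in\im\w_i$, closing the induction.

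The hard part will be the bookkeeping in the second and third steps: correctly matching $B_l$, and then $\d_{i+3}B_l$, to the third form of Lemma~\ref{L7.6}, and in particular verifying that the constraint tying the two middle faces together is met at each stage. This rests on a careful use of the iterated-wedge recursion of Notation~\ref{N4.9}, the face formulae of Proposition~\ref{P7.1}, and the simplicial identities; once these identifications are in place, the two appeals to Lemma~\ref{L7.6} and the final level-$i$ face computation are routine.
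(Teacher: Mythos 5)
Your proposal is correct and follows essentially the same route as the paper's own proof: reduce to showing the inner iterated wedge lies in $\im\w_i$ via Definition~\ref{D6.1}(1), identify $\d_{i+2}(u'\w_{i+1}^{l+1}v)$ as the third form of Lemma~\ref{L7.6}, and induct on~$l$, peeling off the outer wedge and computing $\d_{i+3}$ of the inner element with the simplicial identities and Proposition~\ref{P7.1}. The bookkeeping you flag as the hard part is exactly what the paper does, and your verifications of it are accurate.
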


\begin{proof}
Let
$$z=u\w_{i+2}^l\d_{i+2}(u'\w_{i+1}^{l+1}v).$$
Because of Definition~\ref{D6.1}(1), it suffices to show that
$z\in\im\w_i$. We will do this by induction on~$l$.

Let
$$A=\d_{i+2}(u'\w_{i+1}^{l+1}v)$$
and let
$$u''=\begin{cases}
 u& (l=1),\\
 u\w_{i+2}^{l-1}\d_{i+3}A& (l>1),
 \end{cases}$$
so that
$$z=u''\w_{i+2}A.$$
Note that
$$A=\d_{i+2}[(u'\w_{i+1}^l\d_{i+2}v)\w_{i+1}v]$$
with $u'\w_{i+1}^l\d_{i+2}v\in\im\w_{i+1}$ by the definition
of~$\w_{i+1}^l$ and with $v\in\im\w_i$ by hypothesis. By
Lemma~\ref{L7.6}, to show that $z\in\im\w_i$ it suffices
to show that $u''\in\im\w_i$.

Suppose that $l=1$. Then $u''=u$, so $u''\in\im\w_i$ by
hypothesis.

Suppose that $l>1$. Using Proposition~\ref{P7.1} we get
\begin{align*}
 u''
 &=u\w_{i+2}^{l-1}\d_{i+3}A\\
 &=u\w_{i+2}^{l-1}\d_{i+2}\d_{i+4}(u'\w_{i+1}^{l+1}v)\\
 &=u\w_{i+2}^{l-1}\d_{i+2}(u'\w_{i+1}^l\d_{i+3}v).
\end{align*}
Since $v\in\im\w_i$, it follows from Definition~\ref{D6.1}(1)
that $\d_{i+3}v\in\im\w_i$, and it then follows from the
inductive hypothesis that $u''\in\im\w_i$.

This completes the proof.
\end{proof}

\begin{lemma} \label{L7.10}
In a set with complicial identities, let $v'$ be an element of
the form $\d_{k+1}(u\w_k^l v)$ with $v\in\im\w_i$. Then
$v'\in\im\w_i$ whenever one of the following sets of conditions
is satisfied:

\textup{(1)} $i\leq k-2$ and $u\in\im\w_i$\textup{;}

\textup{(2)} $k\leq i<k+l$ and $l\geq 2$\textup{;}

\textup{(3)} $k+l\leq i$ and $u\in\im\w_{i-l+1}$.
\end{lemma}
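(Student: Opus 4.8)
The plan is to handle each case by first locating $u\w_k^l v$ itself in a suitable wedge image and then reading off $v'=\d_{k+1}(u\w_k^l v)$ from the face formulas of Definition~\ref{D6.1}(1). Indeed, $\d_{k+1}$ of a $\w_{i'}$-element is again a $\w_i$-element exactly when $k+1\geq i'+3$ (returning $\im\w_{i'}$, so one wants $i'=i$) or when $k+1<i'$ (returning $\im\w_{i'-1}$, so one wants $i'=i+1$); the value $i=k-1$ omitted from the statement is precisely the one for which only the undetermined composite face $\d_{i'}$ or $\d_{i'+2}$ would be available. Throughout I will use the two factorisations of $u\w_k^l v$ read off from Notation~\ref{N4.9}, namely $u\w_k^l v=(u\w_k^{l-1}\d_{k+1}v)\w_k v$ and $u\w_k^l v=(u\w_k\d_{k+1}^{l-1}v)\w_k^{l-1}v$, together with the observation that iterating Definition~\ref{D6.1}(1) keeps $\d_{k+1}^{j}v$ in $\im\w_{i-j}$ as long as $k+1$ stays below the relevant wedge index.

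For case~(1) ($i\leq k-2$) I would show by induction on~$l$ that $u\w_k^l v\in\im\w_i$. The base $l=1$ is Lemma~\ref{L7.8}, since both factors lie in $\im\w_i$ and $i\leq k$; the step writes $u\w_k^l v=(u\w_k^{l-1}\d_{k+1}v)\w_k v$, notes $\d_{k+1}v\in\im\w_i$ by Definition~\ref{D6.1}(1) (as $k+1\geq i+3$) so the inductive hypothesis applies to the first factor, and applies Lemma~\ref{L7.8} once more. Since $k+1\geq i+3$, Definition~\ref{D6.1}(1) then turns $u\w_k^l v\in\im\w_i$ into $v'\in\im\w_i$. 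Case~(3) ($k+l\leq i$) is the mirror image: I induct on~$l$ to prove $u\w_k^l v\in\im\w_{i+1}$, now peeling on the left as $u\w_k^l v=(u\w_k\d_{k+1}^{l-1}v)\w_k^{l-1}v$, checking that $\d_{k+1}^{l-1}v\in\im\w_{i-l+1}$ and hence (Lemma~\ref{L7.8}) $u\w_k\d_{k+1}^{l-1}v\in\im\w_{i-l+2}$, which is the image the inductive hypothesis requires of the new first argument. As $k+1\leq i$, Definition~\ref{D6.1}(1) again delivers $v'\in\im\w_i$.

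The substance is case~(2), $k\leq i<k+l$ with $l\geq2$ and \emph{no} hypothesis on~$u$. For $i\geq k+1$ I would prove the auxiliary statement that $u\w_k^l v\in\im\w_{i+1}$ whenever $v\in\im\w_i$ and $k\leq i\leq k+l-1$, by induction on~$l$; granting it, $k+1<i+1$ and Definition~\ref{D6.1}(1) give $v'\in\im\w_i$. The base $l=1$ (so $i=k$) is where axiom~(3) is essential: writing $v=y\w_k z$, the identity $u\w_k(y\w_k z)=(\d_{k+2}u\w_k y)\w_{k+1}\d_{k+1}[u\w_k(y\w_k z)]$ exhibits $u\w_k v$ as a $\w_{k+1}$ with no constraint on~$u$. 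In the step, interior indices $i\leq k+l-2$ reduce to smaller~$l$ by left-peeling, while the boundary index $i=k+l-1$ is the crux: for $l=2$ the element is $(u\w_k z)\w_k(y\w_{k+1}z)$, which has exactly the shape Lemma~\ref{L7.6} places in $\im\w_{k+2}$, and for $l\geq3$ it follows from the inductive hypothesis applied to $u\w_k^{l-1}\d_{k+1}v\in\im\w_{k+l-1}$ together with Lemma~\ref{L7.8}. The remaining value $i=k$, where the argument above would only yield the undetermined composite, is treated separately: in $u\w_k^l v=(u\w_k^{l-1}\d_{k+1}v)\w_k v$ the first factor is itself a wedge $\w_k$ (this uses $l\geq2$), so both factors lie in $\im\w_k$ and Lemma~\ref{L7.5} gives $v'\in\im\w_k$.

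I expect case~(2), and within it the boundary index $i=k+l-1$, to be the main obstacle. Because nothing is assumed about~$u$, one cannot invoke Lemma~\ref{L7.8} at the bottom of the induction, and it is exactly axiom~(3) and Lemma~\ref{L7.6} that produce membership in the next wedge image with $u$ unconstrained; the nested induction on~$l$ is what propagates this upward. The remaining difficulty is bookkeeping: tracking which clause of Definition~\ref{D6.1}(1) converts a given wedge-image membership of $u\w_k^l v$ into the desired conclusion for $v'$, and recognising that the gap $i=k-1$ is exactly the composite face, is what forces the three-way split in the statement.
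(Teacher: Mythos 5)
Your proof is correct and is essentially the paper's own argument: the same decomposition of $u\w_k^l v$ into iterated binary $\w_k$-wedges, Lemma~\ref{L7.8} to transport wedge-image membership in cases (1) and (3) and in the climb of case (2), Lemma~\ref{L7.5} for the value $i=k$, Lemma~\ref{L7.6} at the one critical corner of the range $k<i<k+l$, and Definition~\ref{D6.1}(1) to convert membership of the whole wedge into membership of the face $\d_{k+1}$. The only difference is packaging: the paper iterates directly along the partial wedges $u_j=u_{j-1}\w_k\d_{k+1}^{l-j}v$, whereas you organize the identical sequence of lemma applications as a structural induction on~$l$; your extra base case via axiom~(3) (the case $l=1$, $i=k$ of your auxiliary statement) is never actually reached when $i>k$, since you settle $i=k$ separately by Lemma~\ref{L7.5}, so it is harmless but redundant.
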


\begin{proof}
Let $u_0=u$, and for $0<j\leq l$ let
$$u_j=u_{j-1}\w_k\d_{k+1}^{l-j}v;$$
we must show in each case that $\d_{k+1}u_l\in\im\w_i$.

(1) Since $v\in\im\w_i$ with $i\leq k-2$, it follows from
Definition~\ref{D6.1}(1) that $\d_{k+1}^{l-j}v\in\im\w_i$ for
$0\leq j<l$. Since $u_0\in\im\w_i$, it follows from
Lemma~\ref{L7.8} that $u_1,\ldots,u_l\in\im\w_i$, and it
then follows from Definition~\ref{D6.1}(1) that
$\d_{k+1}u_l\in\im\w_i$.

(2) Suppose first that $i=k$; we must show that
$$\d_{k+1}(u_{l-1}\w_k v)\in\im\w_k.$$
But $u_{l-1}\in\im\w_k$ because $l\geq 2$, and $v\in\im\w_k$
because $i=k$, so the result follows from Lemma~\ref{L7.5}.

Now suppose that $k<i<k+l$. We have $u_{k+l-i}\in\im\w_k$
because $i<k+l$, and we have $\d_{k+1}^{i-k-1}v\in\im\w_{k+1}$
by Definition~\ref{D6.1}(1); therefore
$u_{k+l-i+1}\in\im\w_{k+2}$ by Lemma~\ref{L7.6}. We also
have $\d_{k+1}^{i-k-2}v\in\im\w_{k+2}$ by
Definition~\ref{D6.1}(1), so $u_{k+l-i+2}\in\im\w_{k+3}$ by
Lemma~\ref{L7.8}. By repeating the use of
Lemma~\ref{L7.8}, we eventually get $u_l\in\im\w_{i+1}$.
Definition~\ref{D6.1}(1) now gives us $\d_{k+1}u_l\in\im\w_i$.

(3) In this case $u_0\in\im\w_{i-l+1}$ by hypothesis and
$\d_{k+1}^{l-1}v\in\im\w_{i-l+1}$ by Definition~\ref{D6.1}(1);
therefore $u_1\in\im\w_{i-l+2}$ by Lemma~\ref{L7.8}. In
the same way $u_2\in\im\w_{i-l+3}$, and so on. Eventually we
get $u_l\in\im\w_{i+1}$. As before, Definition~\ref{D6.1}(1)
now gives us $\d_{k+1}u_l\in\im\w_i$.
\end{proof}

The final result is a converse to Proposition~\ref{P5.2}.

\begin{proposition} \label{P7.11}
Let $x$ be an element of the form
$$x=\L^r(u_{r-1},\ldots,u_0,\e_r^s w)$$
in a set with complicial identities.

If $i\leq r-3$, if $w\in\im\w_i$, and if $u_p\in\im\w_i$ for
$i+2\leq p<r$, then $x\in\im\w_i$.

If  $w\in\im\e_{r-2}$ and $u_{r-1}\in\im\w_{r-2}$ then
$x\in\im\w_{r-2}$.

If $s\geq 1$ and $u_{r-1}\in\im\w_{r-1}$ then
$x\in\im\w_{r-1}$.

If $r\leq i\leq r+s-2$, and if $u_p\in\im\w_{i-r+p+1}$ for
$0\leq p<r$, then $x\in\im\w_i$.
\end{proposition}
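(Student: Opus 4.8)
The plan is to evaluate $x$ one wedge at a time along the recursion of Proposition~\ref{P5.3}: put $v_0=\e_r^s w$ and, for $1\le p\le r$, set $w_p=u_{p-1}\w_{p-1}^{r-p+1}v_{p-1}$ and $v_p=\d_p w_p$, so that $x=v_r$. In each of the four statements I would prove by induction on~$p$ that $v_p$ lies in the relevant image $\im\w_{(\cdot)}$, reading each face $v_p=\d_p(u_{p-1}\w_{p-1}^{r-p+1}v_{p-1})$ as an instance of $\d_{k+1}(u\w_k^l v)$ with $k=p-1$ and $l=r-p+1$, so that $k+l=r$ throughout. Since we are in a general set with complicial identities, Proposition~\ref{P3.8} is unavailable; the whole argument is therefore assembled from Lemmas~\ref{L7.5}--\ref{L7.10} and Definition~\ref{D6.1}(2), and the only real work is deciding which tool governs each step.

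First I would settle the base cases, i.e.\ the membership of $v_0=\e_r^s w$. When $i\le r-3$, every degeneracy index in $\e_r^s$ is at least $r\ge i+3$, so applying Lemma~\ref{L7.7} to the outermost $\e_r$ at each stage gives $v_0\in\im\w_i$. When $w\in\im\e_{r-2}$, the simplicial identities let me commute $\e_{r-2}$ to the front of $\e_r^s\e_{r-2}$, whence $v_0\in\im\e_{r-2}\subseteq\im\w_{r-2}$ by Definition~\ref{D6.1}(2). When $s\ge1$, the outermost degeneracy of $v_0=\e_r^s w$ is $\e_r$, so $v_0\in\im\e_r\subseteq\im\w_{r-1}$ by the second identity of Definition~\ref{D6.1}(2). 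Finally, when $r\le i\le r+s-2$, the $s$-fold degenerate element $\e_r^s w$ lies in $\im\e_{i+1}$ (here $r\le i+1\le r+s-1$), and hence in $\im\w_i$.

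For the inductive step I would invoke Lemma~\ref{L7.10}. In the fourth statement each step has $k+l=r\le i$ and $u_{p-1}\in\im\w_{i-r+p}=\im\w_{i-l+1}$, so Lemma~\ref{L7.10}(3) applies throughout and yields $v_p\in\im\w_i$ for all~$p$. In the first statement Lemma~\ref{L7.10}(2) carries $v_0,\ldots,v_{i+1}$ into $\im\w_i$, since $k=p-1\le i<k+l=r$ with $l\ge2$ and no hypothesis on the early $u_{p-1}$ is needed; past the gap discussed below, Lemma~\ref{L7.10}(1) carries $v_{i+4},\ldots,v_r$ into $\im\w_i$, since then $i\le p-3$ and $u_{p-1}\in\im\w_i$. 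In the second and third statements Lemma~\ref{L7.10}(2) again propagates membership from $v_0$ up to $v_{r-1}$; the top step $v_{r-1}\mapsto v_r$, where $l=1$ falls outside Lemma~\ref{L7.10}, is precisely the case $i=k-1$ (second statement) or $i=k$ (third statement) of Lemma~\ref{L7.5}, so $u_{r-1},v_{r-1}\in\im\w_i$ give $v_r\in\im\w_i$.

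The main obstacle is the single gap in the first statement at $p=i+2$, where the wedge index $k=i+1$ lies just above~$i$ and none of the three clauses of Lemma~\ref{L7.10} applies. This is exactly what Lemma~\ref{L7.9} is designed to bridge: unfolding the recursion gives
$$v_{i+3}=\d_{i+3}[u_{i+2}\w_{i+2}^{r-i-2}\d_{i+2}(u_{i+1}\w_{i+1}^{r-i-1}v_{i+1})],$$
which matches the hypothesis of Lemma~\ref{L7.9} with $l=r-i-2$. Since $u_{i+2}\in\im\w_i$ (as $i+2<r$) and $v_{i+1}\in\im\w_i$ (just proved), Lemma~\ref{L7.9} delivers $v_{i+3}\in\im\w_i$ directly, skipping the problematic $v_{i+2}$. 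When $i=r-3$ this already reaches the top, since $i+3=r$; when $i\le r-4$, Lemma~\ref{L7.10}(1) finishes the climb to $v_r$. I expect the verification of this bridging identity, together with the careful matching of the shifting wedge indices $i-r+p+1$ to the hypotheses on the~$u_p$, to be where the care is needed; everything else is routine index bookkeeping.
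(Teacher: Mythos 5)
Your proposal is correct and follows essentially the same route as the paper's own proof: the same recursion $v_p$, the same base cases for $v_0$ (Lemma~\ref{L7.7}, the commutation $\e_r^s\e_{r-2}=\e_{r-2}\e_{r-1}^s$, Definition~\ref{D6.1}(2)), Lemma~\ref{L7.10}(2)/(1)/(3) for the inductive steps, Lemma~\ref{L7.5} for the top step in the middle two cases, and Lemma~\ref{L7.9} to jump over the problematic index $v_{i+2}$ exactly as the paper does. The only difference is your indexing convention ($k=p-1$, $l=r-p+1$ versus the paper's $k=p$, $l=r-p$), which is cosmetic.
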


\begin{proof}
Let $v_0=\e_r^s w$, and for $0\leq p<r$ let
$$v_{p+1}=\d_{p+1}(u_p\w_p^{r-p}v_p);$$
thus $x=v_r$. We claim that $v_p\in\im\w_i$ for $0\leq p\leq
r$, except possibly for cases with $i\leq r-3$ and $p=i+2$.

First we suppose that $i\leq r-3$, that $w\in\im\w_i$, and that
$u_p\in\im\w_i$ for $i+2\leq p<r$. We have $v_0\in\im\w_i$ by
Lemma~\ref{L7.7}, and we get
$$v_1\in\im\w_i,\ \ldots,\ v_{i+1}\in\im\w_i$$
by repeated applications of Lemma~\ref{L7.10}(2). We now
have
$$v_{i+3}
 =\d_{i+3}[u_{i+2}\w_{i+2}^{r-i-2}\d_{i+2}(u_{i+1}\w_{i+1}^{r-i-1}v_{i+1})]$$
with $u_{i+2}$~and~$v_{i+1}$ in $\im\w_i$, so
$v_{i+3}\in\im\w_i$ by Lemma~\ref{L7.9}. Since
$u_p\in\im\w_i$ for $i+3\leq p<r$, repeated applications of
Lemma~\ref{L7.10}(1) now show that the elements
$$v_{i+4},v_{i+5},\ldots,v_r$$
are in $\im\w_i$.

Next we suppose that $w\in\im\e_{r-2}$ and
$u_{r-1}\in\im\w_{r-2}$. We have $v_0\in\im\e_{r-2}$ because
$\e_r^s\e_{r-2}=\e_{r-2}\e_{r-1}^s$; hence, by
Definition~\ref{D6.1}(2), $v_0\in\im\w_{r-2}$. As in the
previous case, we get
$$v_1\in\im\w_{r-2},\ \ldots,\ v_{r-1}\in\im\w_{r-2}.$$
Since $v_r=\d_r(u_{r-1}\w_{r-1}v_{r-1})$ and since
$u_{r-1}\in\im\w_{r-2}$, it follows from Lemma~\ref{L7.5}
that $v_r\in\im\w_{r-2}$.

Next we suppose that $s\geq 1$ and $u_{r-1}\in\im\w_{r-1}$.
Then $v_0\in\im\e_r$, and it follows from
Definition~\ref{D6.1}(2) that $v_0\in\im\w_{r-1}$. The rest of
the argument is as in the previous case.

Finally we suppose that $r\leq i\leq r+s-2$ and that
$u_p\in\im\w_{i-r+p+1}$ for $0\leq p<r$. We get
$v_0=\e_{i+1}\e_r^{s-1}w$; therefore $v_0\in\im\w_i$ by
Definition~\ref{D6.1}(2). We then get
$$v_1\in\im\w_i,\ \ldots,\ v_r\in\im\w_i$$
by repeated applications of Lemma~\ref{L7.10}(3).

This completes the proof.
\end{proof}

\section{Freeness} \label{S8}

We have shown that $\Or(-,n)$ is a set with complicial
identities generated by the identity morphism~$\iota_n$ (see
Proposition~\ref{P6.6} and Theorem~\ref{T4.12}). We will now
show that $\Or(-,n)$ is freely generated by~$\iota_n$; that is
to say, given an $n$-dimensional element~$u$ in a set with
complicial identities~$U$, we will show that there is a unique
morphism $f\colon\Or(-,n)\to U$ with $f\iota_n=u$.

We will construct morphisms on $\Or(-,n)$ by combining suitable
functions, called partial morphisms, which are defined on subsets of sets with complicial identities and take their values in sets with complicial identities. 
The domain of a partial morphism is closed under face operations, but not necessarily under degeneracy or wedge operations. A partial morphism preserves the operations so far as can be expected, and is allowed to increase degrees.

\begin{definition} \label{D8.1}
Let $S$ be a subset of a set with complicial identities, let
$U$ be a set with complicial identities, and let $k$ be a
nonnegative integer. Then a \emph{partial morphism of
degree~$k$} from~$S$ to~$U$ is a function $f\colon S\to U$
which increases degrees by~$k$ and  which satisfies the
following conditions.

\textup{(1)} If $x$~is an $m$-dimensional member of~$S$ with
$m>0$ and if $0\leq i\leq m$, then $\d_i x\in S$ and $f\d_i
x=\d_i fx$.

\textup{(2)} If $x$~is a $1$-dimensional member of~$S$ and
$x\in\im\e_0$ then $fx\in\im\e_0$.

\textup{(3)} If $x\in S$ and $x\in\im\w_i$ then $fx\in\im\w_i$.
\end{definition}

Partial morphisms preserve degeneracies and wedges as much as possible.

\begin{proposition} \label{P8.2}
Let $f\colon S\to U$ be a partial morphism and let $x$ be a
member of~$S$.

\textup{(1)} If $x=\e_i y$ for some~$y$, then $y\in S$ and
$fx=\e_i fy$.

\textup{(2)} If $x=y\w_i z$ for some $y$~and~$z$, then $y,z\in
S$ and $fx=fy\w_i fz$.
\end{proposition}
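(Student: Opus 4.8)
The plan is to prove part~(2) first, as it is self-contained, and then to deduce part~(1) from it by an induction, exploiting the fact that every degeneracy is expressible as a wedge through Definition~\ref{D6.1}(2).

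For part~(2), suppose $x = y \w_i z$. Since $x \in S$ and $S$ is closed under faces by Definition~\ref{D8.1}(1), both $\d_{i+2} x$ and $\d_i x$ lie in~$S$; but by Definition~\ref{D6.1}(1) these faces equal $y$ and $z$, so $y, z \in S$. As $x \in \im\w_i$, Definition~\ref{D8.1}(3) gives $fx \in \im\w_i$, say $fx = a \w_i b$. Reading Definition~\ref{D6.1}(1) in~$U$ we get $a = \d_{i+2}(fx)$ and $b = \d_i(fx)$, and since $f$ commutes with faces these equal $f\d_{i+2} x = fy$ and $f\d_i x = fz$. Hence $fx = fy \w_i fz$.

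For part~(1) I would induct on the dimension~$m$ of the element~$y$ for which $x = \e_i y$; note first that $y = \d_i x \in S$. In the inductive step ($m \geq 1$) I rewrite $x$ as a wedge via Definition~\ref{D6.1}(2): for $i < m$ one has $x = (\e_i \d_{i+1} y) \w_i y$, and for $i = m$ one has $x = y \w_{m-1} (\e_{m-1} \d_{m-1} y)$. Applying part~(2) pushes~$f$ through the wedge, and the auxiliary factor is itself the degeneracy of an element of dimension $m-1$, to which the inductive hypothesis applies; Definition~\ref{D6.1}(2) read backwards in~$U$ then reassembles $\e_i(fy)$. The relevant degeneracy index is always strictly below $\dim fy = m + k$, so the hypotheses of Definition~\ref{D6.1}(2) are satisfied in~$U$.

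The main obstacle is organising this induction so that it is well-founded. A naive induction on $\dim x$ fails, because the auxiliary degeneracy $\e_i \d_{i+1} y$ has the same dimension as~$x$; one must instead induct on the dimension of the element being degeneracified, which strictly drops from $m$ to $m-1$ at each recursive call. Two boundary cases also need separate handling: the top degeneracy $\e_m$ requires the second formula of Definition~\ref{D6.1}(2) rather than the first, and the base case $m = 0$ lies outside the scope of that axiom entirely, so there I would instead invoke Definition~\ref{D8.1}(2) to obtain $fx \in \im\e_0$ and then recover $fx = \e_0 fy$ by comparing the face $\d_0$ of both sides.
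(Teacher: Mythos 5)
Your proposal is correct and takes essentially the same route as the paper: part (2) first, via face-closure together with Definition~\ref{D8.1}(3) and Definition~\ref{D6.1}(1), then part (1) by induction, rewriting degeneracies as wedges via Definition~\ref{D6.1}(2) with the one-dimensional case settled by Definition~\ref{D8.1}(2); your case split ($i<m$ versus $i=m$) differs only cosmetically from the paper's ($i=0$ versus $i>0$). One correction to a side remark: your claim that a naive induction on $\dim x$ fails is mistaken, since the auxiliary element $\e_i\d_{i+1}y$ is $\e_i$ applied to the $(m-1)$-dimensional element $\d_{i+1}y$ and so has dimension $m=\dim x-1$, not $\dim x$; that induction is therefore well-founded (it is exactly the one the paper runs), and your induction on $\dim y$ is the same induction under a shifted index.
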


\begin{proof}
We prove these statements in reverse order.

(2) Suppose that $x=y\w_i z$. Then $y=\d_{i+2}x$ and $z=\d_i x$
by Definition~\ref{D6.1}(1); therefore $y,z\in S$. Since also
$fx\in\im\w_i$, it then follows that
$$fx=\d_{i+2}fx\w_i\d_i fx=f\d_{i+2}x\w_i f\d_i x=fy\w_i fz.$$

(1) Suppose that $x=\e_i y$. Then $y=\d_i x$, so that $y\in S$.
We will now prove that $fx=\e_i fy$ by induction on the
dimension of~$x$.

Suppose that $x=\e_0 y$ and $x$~is $1$-dimensional. Then
$fx\in\im\e_0$, and it follows that
$$fx=\e_0\d_0 fx=\e_0 f\d_0 x=\e_0 fy.$$

Suppose that $x=\e_0 y$ and that the dimension of~$x$ is
greater than~$1$. Then $x=\e_0\d_1 y\w_0 y$ by
Definition~\ref{D6.1}(2). From the inductive hypothesis and
from what we have already proved, it now follows that
$$fx
 =f\e_0\d_1 y\w_0 fy
 =\e_0 f\d_1 y\w_0 y
 =\e_0\d_1 fy\w_0 fy
 =\e_0 fy.$$

Finally suppose that $x=\e_i y$ with $i>0$. Then
$x=y\w_{i-1}\e_{i-1}\d_{i-1}y$ by Definition~\ref{D6.1}(2), and
it follows as in the previous case that $fx=\e_i fy$.

This completes the proof.
\end{proof}

In certain circumstances, it follows that partial morphisms also preserve expressions of the form $\L^r(u_{r-1},\ldots,u_0,v)$.

\begin{proposition} \label{P8.3}
Let $f$ be a partial morphism on a subset~$S$ of $\Or(-,n)$,
let
$$x=\L^r(u_{r-1},\ldots,u_0,v)$$
be defined in $\Or(-,n)$, let the terminus of~$v$ be~$t$, and
let the corank of~$v$ be~$s$. Suppose that $S$~contains all
morphisms with terminus~$t$\textup{;} alternatively, suppose
that $\rank v\geq r$ and that $S$~contains all morphisms with
terminus~$t$ and corank~$s$. Then the morphisms
$x,u_{r-1},\ldots,u_0,v$ are members of~$S$, and
$$fx=\L^r(fu_{r-1},\ldots,fu_0,fv).$$
\end{proposition}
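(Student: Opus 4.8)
The plan is to follow the decomposition of $\L^r$ introduced in Proposition~\ref{P5.3}. Writing $v_0=v$ and, for $1\le p\le r$, $w_p=u_{p-1}\w_{p-1}^{r-p+1}v_{p-1}$ and $v_p=\d_p w_p$, one has $x=v_r$, and the whole expression is assembled from iterated wedges (the $w_p$) and single faces (the passage $w_p\mapsto v_p=\d_p w_p$). A partial morphism commutes with faces by Definition~\ref{D8.1}(1) and with wedges by Proposition~\ref{P8.2}(2), so the identity $fx=\L^r(fu_{r-1},\ldots,fu_0,fv)$ should follow by propagating~$f$ through this construction --- provided every morphism occurring in it lies in~$S$. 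The substance of the proposition is therefore the membership claim, not the compatibility of~$f$ with the operations.

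The main obstacle is that each $w_p$ has dimension one greater than~$x$, so its membership in~$S$ cannot be inferred from $x\in S$ by closure under faces alone. This is exactly where Proposition~\ref{P5.3} and the hypotheses enter. By Proposition~\ref{P5.3} every $v_p$ and every $w_p$ has the same terminus~$t$ as~$v$, and, when $\rank v\ge r$, also the same corank~$s$ as~$v$. Thus under the first hypothesis --- that $S$ contains every morphism of terminus~$t$ --- each $w_p$ lies in~$S$; and under the alternative hypothesis --- that $\rank v\ge r$ and $S$ contains every morphism of terminus~$t$ and corank~$s$ --- each $w_p$ again lies in~$S$. Once the $w_p$ are in~$S$, the faces $v_p=\d_p w_p$ lie in~$S$ by Definition~\ref{D8.1}(1); the morphisms $v=v_0$ and $x=v_r$ lie in~$S$ directly from the hypotheses together with Proposition~\ref{P5.3}; and each $u_{p-1}$ will appear in~$S$ as a wedge factor in the next step.

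To finish, I would propagate~$f$. Fixing~$p$ with $w_p\in S$, I would strip the iterated wedge $u_{p-1}\w_{p-1}^{r-p+1}v_{p-1}$ down one wedge at a time: each application of Proposition~\ref{P8.2}(2) both places the two wedge factors in~$S$ and commutes~$f$ past that wedge, so an induction on the wedge-length, combined with $f\d_i=\d_i f$, yields $fw_p=fu_{p-1}\w_{p-1}^{r-p+1}fv_{p-1}$ (and exhibits each $u_{p-1}$ in~$S$). Definition~\ref{D8.1}(1) then gives $fv_p=\d_p fw_p$. A concluding induction on~$p$, started at $fv_0=fv$ and using these two relations, reproduces on the $f$-side precisely the decomposition of $\L^r(fu_{r-1},\ldots,fu_0,fv)$ furnished by Proposition~\ref{P5.3}, whence $fx=fv_r=\L^r(fu_{r-1},\ldots,fu_0,fv)$; the existence of the right-hand side in~$U$ needs no separate check, since each of its wedges is the $f$-image of a wedge already defined in~$\Or(-,n)$. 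I expect all of this to be routine once the membership of the higher-dimensional $w_p$ in~$S$ is secured --- that single point, resting on the terminus-and-corank invariance of Proposition~\ref{P5.3}, is the crux of the argument.
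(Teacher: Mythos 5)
Your proof is correct and is essentially the paper's own argument: the paper's proof consists of the single line ``This follows from Propositions~\ref{P5.3} and~\ref{P8.2}'', and what you have written is exactly the intended expansion --- Proposition~\ref{P5.3} secures membership in~$S$ of the higher-dimensional intermediate wedges $w_p$ via terminus (and, in the second case, corank) invariance, after which Definition~\ref{D8.1}(1) and Proposition~\ref{P8.2}(2) let $f$ commute through the faces and iterated wedges. You have also correctly identified the membership of the $w_p$ as the one non-routine point.
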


\begin{proof}
This follows from Propositions \ref{P5.3} and~\ref{P8.2}.
\end{proof}

Given an $n$-dimensional element~$u$ in a set with complicial identities, we will now show how to construct a morphism on $\Or(-,n)$ sending~$\iota_n$ to~$u$. We use a  recursion on terminus and corank. For $t\geq 0$ let $S_t$ be
the subset of $\Or(-,n)$ consisting of the morphisms of
terminus at most~$t$. For $t\geq 0$ and $s\geq -1$ let $S_t^s$
be the subset of~$S_t$ consisting of the following
morphisms: the morphisms with terminus less than~$t$; the
morphism $\d_0^t\d_{t+1}^{n-t}\iota_n$; the $t$-cones, in the
case that $t>0$; the morphisms with terminus~$t$ and with
corank at most~$s$. We now proceed as follows. We construct a partial morphism  on~$S_0^{-1}$ sending $\d_1^n\iota_n$ to~$u$, we extend it to~$S_0^0$, $S_0^1$, \dots, and eventually to the whole of~$S_0$, we then construct a partial morphism on~$S_1^{-1}$ sending $\d_2^{n-1}\iota_n$ to~$u$, and so on, finishing with a partial morphism on~$S_n=\Or(-,n)$ (actually a genuine morphism) sending~$\iota_n$ to~$u$. The steps in this process are given by Lemmas \ref{L8.4}--\ref{L8.6}.

\begin{lemma} \label{L8.4}
Let $u$ be an element in a set with complicial identities~$U$.
Then there is a partial morphism $f\colon S_0^{-1}\to U$ such
that $f\d_1^n\iota_n=u$.
\end{lemma}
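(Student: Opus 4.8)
The plan is to observe that the domain $S_0^{-1}$ is a singleton, so that the required partial morphism can be written down directly and the axioms of Definition~\ref{D8.1} checked for purely dimensional reasons. First I would unwind the definition of $S_t^s$ in the case $t=0$, $s=-1$. Of the four listed families of morphisms, three contribute nothing: there are no morphisms of terminus less than~$0$; there are no $0$-cones, since $\rho_0$ would require a morphism of terminus less than~$0$ (and the $t$-cones are included only when $t>0$); and there are no morphisms of terminus~$0$ with corank at most~$-1$. The only surviving member is the explicitly listed morphism $\d_0^0\d_1^n\iota_n=\d_1^n\iota_n$. Hence $S_0^{-1}=\{\d_1^n\iota_n\}$, a single morphism lying in $\Or(0,n)$ and therefore of dimension~$0$.

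Next I would simply set $f\d_1^n\iota_n=u$. Since $\d_1^n\iota_n$ is $0$-dimensional and $u$ is $n$-dimensional, this $f$ increases degrees by the constant~$n$, and so is a candidate partial morphism of degree~$n$. It remains to check conditions (1)--(3) of Definition~\ref{D8.1}, and all three are vacuous on a single $0$-dimensional element. Condition~(1) constrains only $m$-dimensional members with $m>0$; condition~(2) constrains only $1$-dimensional members; and condition~(3) constrains only members lying in some $\im\w_i$. For the last point I would note that, by Definition~\ref{D6.1}, a wedge $\w_i$ carries $X_m\times X_m$ into $X_{m+1}$ with $0\leq i\leq m-1$, forcing $m\geq 1$, so every element of $\im\w_i$ has dimension at least~$2$; in particular the $0$-dimensional morphism $\d_1^n\iota_n$ lies in no such image. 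Thus $f$ satisfies all the conditions and sends $\d_1^n\iota_n$ to~$u$.

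The only genuine work is the bookkeeping of the first step: correctly reading off that $S_0^{-1}$ is a singleton whose unique member is $0$-dimensional. Once that is established there is no substantive obstacle, since the partial-morphism axioms impose no condition at dimension~$0$; this is exactly why the conclusion holds for an arbitrary $n$-dimensional element~$u$.
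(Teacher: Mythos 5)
Your proposal is correct and follows exactly the paper's own argument: identify $S_0^{-1}$ as the singleton $\{\d_1^n\iota_n\}$, define $f$ by the required condition, and observe that the partial-morphism axioms are vacuous for a single $0$-dimensional element. The paper states this last step simply as ``trivially a partial morphism''; your write-up just makes the dimensional bookkeeping explicit.
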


\begin{proof}
The only morphism in~$S_0^{-1}$ is the zero-dimensional
morphism $\d_1^n\iota_n$. The condition $f\d_1^n\iota_n=u$
therefore defines a function $f\colon S_0^{-1}\to U$, and this
function is trivially a partial morphism.
\end{proof}

\begin{lemma} \label{L8.5}
For $t>0$, let $F\colon S_{t-1}\to U$ be a partial morphism of
degree $k>0$. Then there is a partial morphism $f\colon
S_t^{-1}\to U$ of degree $k-1$ such that
$f\d_{t+1}^{n-t}\iota_n=F\d_t^{n-t+1}\iota_n$.
\end{lemma}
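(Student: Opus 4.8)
The plan is to exploit the explicit structure of $S_t^{-1}$. By definition $S_t^{-1}$ is the disjoint union of $S_{t-1}$, the single vertex $c=\d_0^t\d_{t+1}^{n-t}\iota_n$ (the morphism sending $[0]$ to $[t]$), and the set of $t$-cones. Since the top face recovers the cone, namely $\d_{m+1}\rho_t x=x$ for $x\in\Or(m,n)$, the cone construction is injective, so every $t$-cone is $\rho_t x$ for a unique $x\in S_{t-1}$. I would also use the face behaviour of cones: by Proposition~\ref{P4.5} one has $\d_i\rho_t x=\rho_t\d_i x$ for $0\leq i\leq m$ (again a $t$-cone), while $\d_{m+1}\rho_t x=x\in S_{t-1}$; in the extreme case that $x$ is a vertex one has $\d_0\rho_t x=c$. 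In particular $S_t^{-1}$ is closed under faces, as required of the domain of a partial morphism.

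I would then define $f$ piecewise:
$$f(\rho_t x)=Fx\ (x\in S_{t-1}),\qquad fx=\d_{m+1}Fx\ (x\in S_{t-1},\ \dim x=m),\qquad fc=\d_0 F[a],$$
where $[a]$ is any vertex of terminus less than~$t$. Each clause raises dimension by exactly $k-1$, so $f$ has degree $k-1$. The normalization is immediate: $\d_{t+1}^{n-t}\iota_n=\rho_t\d_t^{n-t+1}\iota_n$ by Proposition~\ref{P4.5}, and $\d_t^{n-t+1}\iota_n\in S_{t-1}$, so $f\d_{t+1}^{n-t}\iota_n=F\d_t^{n-t+1}\iota_n$ by the first clause. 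The value $fc$ is independent of the chosen vertex: if $[a]$ and $[a']$ are vertices of terminus less than~$t$ then they are the faces of the edge $[a,a']\in S_{t-1}$, and the simplicial identity $\d_0\d_1=\d_0\d_0$ together with the fact that $F$ commutes with faces gives $\d_0 F[a]=\d_0\d_1 F[a,a']=\d_0\d_0 F[a,a']=\d_0 F[a']$.

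It then remains to check the three partial-morphism axioms. For axiom~(1) the verification splits along the faces above: on a cone the lower faces reduce to $F\d_i x=\d_i Fx$, the top face is exactly the defining clause $fx=\d_{m+1}Fx$, and on $x\in S_{t-1}$ one uses $\d_i\d_{m+1}=\d_m\d_i$ (for $i\leq m$) to match $\d_i fx$ with $f\d_i x$. Axiom~(2) concerns a degenerate edge $\e_0[a]\in S_{t-1}$, and follows from $F\e_0[a]=\e_0 F[a]$ (Proposition~\ref{P8.2}) and $\d_2\e_0=\e_0\d_1$. For axiom~(3), on a wedge $x=u\w_i v$ in $S_{t-1}$ we have $Fx=Fu\w_i Fv$ (Proposition~\ref{P8.2}), and the fourth face formula of Definition~\ref{D6.1}(1) shows $fx=\d_{m+1}(Fu\w_i Fv)\in\im\w_i$; on a cone $\rho_t x\in\im\w_i$ one pulls the condition of Proposition~\ref{P3.8} back through the cone to conclude that $Fx=f(\rho_t x)$ lies in $\im\w_i$.

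The main obstacle is this last verification, axiom~(3) for cones. Pulling the condition of Proposition~\ref{P3.8} back through $\rho_t$ is straightforward for indices $i\leq m-2$, where the relevant basis elements avoid the cone vertex and one obtains $x\in\im\w_i$ directly, whence $Fx\in\im\w_i$. The delicate case is the top index $i=m-1$: here the basis elements witnessing $\rho_t x\in\im\w_{m-1}$ necessarily involve the cone vertex $[m+1]$, so the wedge condition does not descend to a wedge condition on~$x$ but, via Proposition~\ref{P3.4}, to the statement that $x\in\im\e_{m-1}$. One then writes $x=\e_{m-1}\d_{m-1}x$, so that $Fx=\e_{m-1}F\d_{m-1}x\in\im\e_{m-1}\subseteq\im\w_{m-1}$ by Definition~\ref{D6.1}(2). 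This degeneracy detour, together with the well-definedness of $fc$, is where the chain-level characterizations of Section~\ref{S3} do the real work.
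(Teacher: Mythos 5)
Your proposal is correct and follows essentially the same route as the paper: the same piecewise definition of $f$ (top face of $Fx$ on $S_{t-1}$, $F$ of the base on $t$-cones, and a compatible value on the vertex $\d_0^t\d_{t+1}^{n-t}\iota_n$), the same case analysis for Definition~\ref{D8.1}(1)--(3), and the same resolution of the delicate top-index wedge case for cones via Propositions \ref{P3.8} and~\ref{P3.4} and the degeneracy identity of Definition~\ref{D6.1}(2). The only cosmetic deviations are your choice-based definition of $f$ on the vertex (the paper uses the canonical formula $\d_0^t F\d_t^{n-t+1}\iota_n$, making your well-definedness check unnecessary) and your chain-level descent of the wedge condition through $\rho_t$, where the paper instead computes $\d_{m+1}$ of the wedge decomposition using Definition~\ref{D6.1}(1).
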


\begin{proof}
We define a function $f\colon S_t^{-1}\to U$ as follows: if
$x$~is a member of $\Or(m,n)$ with terminus less than~$t$, then
$$fx=\d_{m+1}Fx;$$
if $x=\d_0^t\d_{t+1}^{n-t}\iota_n$ then
$$fx=\d_0^t F\d_t^{n-t+1}\iota_n;$$
if $x$~is a $t$-cone in $\Or(m+1,n)$ then $\d_{m+1}x$ has
terminus less than~$t$, and we make the definition
$$fx=F\d_{m+1}x.$$
It is clear that $f$~increases degrees by $k-1$ and that
$$f\d_{t+1}^{n-t}\iota_n=F\d_t\d_{t+1}^{n-t}\iota_n=F\d_t^{n-t+1}\iota_n;$$
it therefore remains to verify conditions (1)--(3) of
Definition~\ref{D8.1}.

(1) Let $\d_i x$ be a face of an $m$-dimensional member
of~$S_{t-1}$, so that $m>0$ and $0\leq i\leq m$. Then $\d_i x$
is an $(m-1)$-dimensional member of~$S_{t-1}$, so that $\d_i
x\in S_t^{-1}$, and
$$f\d_i x=\d_m F\d_i x=\d_m\d_i Fx=\d_i\d_{m+1}Fx=\d_i fx.$$

The zero-dimensional morphism $\d_0^t\d_{t+1}^{n-t}\iota_n$
does not have any faces.

Let $x$ be a $1$-dimensional $t$-cone, and consider the face
$\d_0 x$. This is the zero-dimensional morphism
$\d_0^t\d_{t+1}^{n-1}\iota_n$; therefore $\d_0 x\in S_t^{-1}$
and $f\d_0 x=\d_0^t F\d_t^{n-t+1}\iota_n$. The zero-dimensional
morphism $\d_1 x$ must have the form $[0]\mapsto [j]$ with
$0\leq j\leq t-1$, and it can be expressed as
$\d_0^j\d_{j+1}^{t-j-1}\d_t^{n-t+1}\iota_n$. Therefore
$$\d_0 fx
 =\d_0 F\d_1 x
 =\d_0\d_0^j\d_{j+1}^{t-j-1}F\d_t^{n-t+1}\iota_n
 =\d_0^t F\d_t^{n-t+1}\iota_n,$$
and it follows that $f\d_0 x=\d_0 fx$.

Let $\d_i x$ be a face of an $(m+1)$-dimensional $t$-cone such
that $m>0$ and $0\leq i\leq m$. Then $\d_i x$ is an
$m$-dimensional $t$-cone, so that $\d_i x\in S_t^{-1}$ and
$$f\d_i x=F\d_m\d_i x=F\d_i\d_{m+1}x=\d_i F\d_{m+1}x=\d_i fx.$$

Finally, let $x$ be an $(m+1)$-dimensional $t$-cone with $m\geq
0$ and consider the face $\d_{m+1}x$. This is an
$m$-dimensional member of~$S_{t-1}$; therefore $\d_{m+1}x\in
S_t^{-1}$ and
$$f\d_{m+1}x=\d_{m+1}F\d_{m+1}x=\d_{m+1}fx.$$

(2) Let $x$ be a $1$-dimensional member of~$S_t^{-1}$ lying in
the image of~$\e_0$. Then $x\in S_{t-1}$, so that $fx=\d_2 Fx$,
and we also have $x=\e_0\d_0 x$. It now follows from
Proposition~\ref{P8.2} that $fx\in\im\e_0$, because
$$fx=\d_2 F\e_0\d_0 x=\d_2\e_0 F\d_0 x=\e_0\d_1 F\d_0 x.$$

(3) Let $x$ be an $m$-dimensional member of~$S_{t-1}$ lying in
the image of~$\w_i$. Then $fx=\d_{m+1}Fx$. We also have
$x=\d_{i+2}x\w_i\d_i x$ by Definition~\ref{D6.1}(1), and we
must have $0\leq i\leq m-2$. It now follows from
Proposition~\ref{P8.2} and Definition~\ref{D6.1}(1) that
$fx\in\im\w_i$, because
$$fx
 =\d_{m+1}F(\d_{i+2}x\w_i\d_i x)
 =\d_{m+1}(F\d_{i+2}x\w_i F\d_i x)
 =\d_m F\d_{i+2}x\w_i\d_m F\d_i x.$$

The zero-dimensional morphism $\d_0^t\d_{t+1}^{n-t}\iota_n$
cannot belong to the image of a wedge operation.

Let $x$ be an $(m+1)$-dimensional $t$-cone, and suppose that
$x\in\im\w_i$ with $0\leq i\leq m-2$. Then $fx\in\im\w_i$ as
before, because
$$fx
 =F\d_{m+1}(\d_{i+2}x\w_i\d_i x)
 =F(\d_m\d_{i+2}x\w_i\d_m\d_i x)
 =F\d_m\d_{i+2}x\w_i F\d_m\d_i x.$$

Finally, suppose that $x$~is an $(m+1)$-dimensional $t$-cone
lying in the image of~$\w_{m-1}$. We must have
$x=\e_{m-1}\d_{m-1}x$ (see Propositions \ref{P3.8}
and~\ref{P3.4}). It now follows from Proposition~\ref{P8.2} and
Definition~\ref{D6.1}(2) that $fx\in\im\w_{m-1}$, because
$$fx
 =F\d_{m+1}\e_{m-1}\d_{m-1}x
 =F\e_{m-1}\d_m\d_{m-1}x
 =\e_{m-1}F\d_m\d_{m-1}x.$$

This completes the proof.
\end{proof}

\begin{lemma} \label{L8.6}
For $t\geq 0$ and $s\geq 0$, if $f$~is a partial morphism
on~$S_t^{s-1}$, then there is a partial morphism~$f'$
on~$S_t^s$ which is an extension of~$f$.
\end{lemma}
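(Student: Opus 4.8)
The plan is to define $f'$ to agree with $f$ on $S_t^{s-1}$ and, on each new morphism $x$ (one of terminus~$t$ and corank~$s$, and not an atom when $s=0$), to use the canonical form of Theorem~\ref{T4.11}: writing $r=\rank x$, I set
$$f'x=\L^r(f\alpha_{r-1}x,\ldots,f\alpha_0 x,\e_r^s f\gamma x).$$
This uses only values of $f$ already available, since the constituents lie in $S_t^{s-1}$: by Proposition~\ref{P4.6} the morphism $\gamma x$ is an atom, so $\gamma x\in S_t^{-1}\subseteq S_t^{s-1}$, and by Proposition~\ref{P4.8} each $\alpha_p x$ has terminus less than~$t$ when $s=0$ and terminus~$t$ with corank $s-1$ when $s>0$, so $\alpha_p x\in S_t^{s-1}$ in either case. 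The canonical form is determined by~$x$, so $f'x$ is unambiguous once the displayed $\L^r$-expression is known to exist in~$U$; as $f'$ visibly extends $f$ and changes degrees by the same amount, the work is to check this existence together with the three conditions of Definition~\ref{D8.1}.

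For existence I would verify the conditions of Proposition~\ref{P7.2} in~$U$, transporting them from the identities that hold in $\Or(-,n)$ because $x$ is defined there. The left-hand sides give $\d_p(f\alpha_p x)=f\d_p\alpha_p x$ at once, since $f$ preserves faces; for the right-hand sides I would apply Proposition~\ref{P8.3} to the intermediate expressions $\L^p(\alpha_{p-1}x,\ldots,\alpha_0 x,\d_{p+1}^{r-p}\e_r^s\gamma x)$. A corank computation (Propositions~\ref{P4.8} and~\ref{P5.3}) shows that $\d_{p+1}^{r-p}\e_r^s\gamma x$ has terminus~$t$, corank $s-1$ and rank at least~$p$, so every morphism arising in its evaluation already lies in $S_t^{s-1}$ and Proposition~\ref{P8.3} applies. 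To compute $f$ on this last argument I would rewrite it by the simplicial identities as $\e_{p+1}^{s-1}$ applied to the $t$-cone $\d_{p+1}^{r-1-p}\gamma x$, pull $f$ through the degeneracies by Proposition~\ref{P8.2}, and then use that the operator identity $\e_{p+1}^{s-1}\d_{p+1}^{r-1-p}=\d_{p+1}^{r-p}\e_r^s$ holds equally in~$U$.

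Condition~(1), compatibility with faces, runs on the same ideas. For $i<r$ the face $\d_i x$ is again a new morphism, and I would match its value, computed from Proposition~\ref{P5.1} (which gives $\gamma\d_i x$ and $\alpha_p\d_i x$), against $\d_i f'x$, computed from the face formula for $\L^r$ in Proposition~\ref{P7.3}. For $i\geq r$ the face $\d_i x$ has corank $s-1$ or terminus less than~$t$, hence lies in $S_t^{s-1}$, and the required equality $\d_i f'x=f\d_i x$ follows by applying Proposition~\ref{P7.3} on both sides and using Proposition~\ref{P8.3} together with the face and degeneracy rewrites of the previous paragraph. Condition~(2) is immediate: a new $1$-dimensional morphism in $\im\e_0$ must have rank~$0$, so $x=\e_0\gamma x$ and $f'x=\e_0 f\gamma x\in\im\e_0$.

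Condition~(3), preservation of wedge images, is the substantive point, and here Propositions~\ref{P5.2} and~\ref{P7.11} form a matched pair built for exactly this purpose. If $x\in\im\w_i$, then Proposition~\ref{P5.2} places $\gamma x$ and the relevant $\alpha_p x$ in prescribed images $\im\w_j$ or $\im\e_{r-2}$; since $f$ is a partial morphism it preserves all such images, by its own condition~(3) together with Proposition~\ref{P8.2} for degeneracy images, so the $f$-images of the constituents satisfy precisely the hypotheses of the corresponding clause of Proposition~\ref{P7.11}, whence $f'x\in\im\w_i$. The four cases of the two propositions correspond one-for-one, so this step is almost formal. The \emph{main obstacle}, running through all of the above, is the level bookkeeping: to use the inductive datum~$f$ I must know that every auxiliary morphism --- the constituents, the intermediate $\L^p$-expressions, and the face and degeneracy rewrites of $\d_{p+1}^{r-p}\e_r^s\gamma x$ --- has terminus~$t$ and corank at most $s-1$, and hence lies in $S_t^{s-1}$. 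Securing this at each step is exactly the role of the corank formulas in Propositions~\ref{P4.8}, \ref{P5.1} and~\ref{P5.3}, and it is where the real effort lies.
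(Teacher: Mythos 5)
Your construction and most of your verifications track the paper's own proof step for step: the defining formula $f'x=\L^r(f\alpha_{r-1}x,\ldots,f\alpha_0 x,\e_r^s f\gamma x)$, the existence check via Propositions \ref{P7.2} and \ref{P8.3}, the face check via Propositions \ref{P5.1} and \ref{P7.3}, and the wedge check via Propositions \ref{P5.2} and \ref{P7.11} are exactly the paper's steps. The one place you deviate is in exempting the atoms (the morphism $\d_0^t\d_{t+1}^{n-t}\iota_n$ and the $t$-cones) from the formula when $s=0$, so that ``$f'$ visibly extends $f$''. This creates a genuine gap in your verification of condition (1). For $i<r$ you assert that $\d_i x$ ``is again a new morphism''; under your own definition of ``new'' this is false: a face $\d_i x$ with $i<r$ of a non-atom of terminus~$t$ and corank~$0$ can perfectly well be an atom. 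For instance, in $\Or(2,3)$ take $x$ with $x[0]=[0]$, $x[1]=[1]$, $x[2]=[3]$, $x[0,1]=[0,1]$, $x[1,2]=[1,2]+[2,3]$, $x[0,2]=[0,3]$, $x[0,1,2]=[0,1,2]+[0,2,3]$; this has terminus~$3$, rank~$2$, corank~$0$ and is not a $3$-cone, but $\d_1 x$ is the $3$-cone on $[0]\mapsto[0]$. For such a face your two computations do not meet: Propositions \ref{P7.3} and \ref{P5.1} give
$$\d_i f'x=\L^{r-1}(f\alpha_{r-2}\d_i x,\ldots,f\alpha_0\d_i x,\e_{r-1}^s f\gamma\d_i x),$$
which is the canonical-form formula evaluated on $\d_i x$, whereas $f'\d_i x=f\d_i x$ by your definition. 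Equating the two is precisely the assertion that the formula, applied to an atom, returns the value of~$f$ --- the consistency fact you believed you had sidestepped.

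That fact is true but requires proof, and it is exactly the content of the paper's ``extension'' paragraph, which your proposal omits entirely: for an atom $y$ of rank $r'$, Example \ref{E4.7} gives $\gamma y=y$ and $\alpha_p y=\e_p\d_{p+1}^{r'-p}y$, Proposition \ref{P8.2} pulls $f$ through the degeneracies, and the collapsing result Proposition \ref{P7.4} then yields $\L^{r'}(\e_{r'-1}\d_{r'}fy,\ldots,\e_0\d_1^{r'}fy,fy)=fy$. You never invoke Example \ref{E4.7} or Proposition \ref{P7.4}, so this step is missing; once it is added (either as the paper does, by defining $f'$ on all morphisms of terminus~$t$ and corank~$s$ and proving agreement with $f$ on atoms, or as a lemma inserted into your case $i<r$), your argument closes. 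A separate, minor slip: in the existence check you claim $\d_{p+1}^{r-p}\e_r^s\gamma x$ has terminus~$t$ and corank $s-1$; when $s=0$ it instead has terminus less than~$t$, though your conclusion that it lies in $S_t^{s-1}$, and hence that Proposition \ref{P8.3} applies, still holds.
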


\begin{proof}
Let $x$ be a morphism in~$S_t^s$ with terminus less than~$t$ or
with terminus~$t$ and corank less than~$s$; then $x$~belongs
to~$S_t^{s-1}$ and we make the definition
$$f'x=fx.$$

Now let $x$ be a morphism with terminus~$t$ and corank~$s$, and
let the rank of~$x$ be~$r$. According to Theorem~\ref{T4.11},
$x$~has a canonical form given by
$$x=\L^r(\alpha_{r-1}x,\ldots,\alpha_0 x,\e_r^s\gamma x).$$
According to Propositions \ref{P4.8} and~\ref{P4.6}, the
morphisms $\alpha_p x$ and $\gamma x$ are members
of~$S_t^{s-1}$, and we will obtain $f'x$ by applying~$f$ to
each of these morphisms.

In order to show that this is possible, we use
Proposition~\ref{P7.2}. For $0\leq p<r$ it follows from this
proposition and from the existence of the canonical form that
$$\d_p\alpha_p x
 =\L^p(\alpha_{p-1}x,\ldots,\alpha_0 x,\d_{p+1}^{r-p}\e_r^s\gamma x).$$
If $s=0$ then the argument $\d_{p+1}^{r-p}\e_r^s\gamma x$ has
terminus less than~$t$; if $s>0$ then it has terminus~$t$, rank
$p+1$ and corank $s-1$; in both cases, it follows from
Proposition~\ref{P8.3} that
$$\d_p f\alpha_p x
 =f\d_p\alpha_p x
 =\L^p(f\alpha_{p-1}x,\ldots,f\alpha_0 x,
 f\d_{p+1}^{r-p}\e_r^s\gamma x).$$
If now $s=0$ then
$$f\d_{p+1}^{r-p}\e_r^s\gamma x
 =f\d_{p+1}^{r-p}\gamma x
 =\d_{p+1}^{r-p}f\gamma x
 =\d_{p+1}^{r-p}\e_r^s\gamma x;$$
if $s>0$ then $\e_r^{s-1}\gamma x\in S_t^{s-1}$ and we get
$$f\d_{p+1}^{r-p}\e_r^s\gamma x
 =f\d_{p+1}^{r-p-1}\e_r^{s-1}\gamma x
 =\d_{p+1}^{r-p-1}\e_r^{s-1}f\gamma x
 =\d_{p+1}^{r-p}\e_r^s f\gamma x;$$
in any case we get
$$\d_p f\alpha_p x
 =\L^p(f\alpha_{p-1}x,\ldots,f\alpha_0 x,
 \d_{p+1}^{r-p}\e_r^s f\gamma x).$$
Because of Proposition~\ref{P7.2}, we can now define $f'x$ by
the formula
$$f'x=\L^r(f\alpha_{r-1}x,\ldots,f\alpha_0
 x,\e_r^s f\gamma x).$$

We will now show that $f'$~is an extension of~$f$; that is to say, we will show that $f'x=fx$ when $x$~is a morphism with terminus~$t$ and corank~$s$ belonging to~$S_t^{s-1}$. This situation can occur only when $s=0$, with  
$x=\d_0^t\d_{t+1}^{n-t+1}\iota_n$ or with $x$ a $t$-cone. In these cases, let $r$ be the rank of~$x$; then it follows from
Example~\ref{E4.7} that
$$f'x=\L^r(f\e_{r-1}\d_r x,\ldots,f\e_0\d_1^r x,fx),$$
it follows from Proposition~\ref{P8.2} that
$$f'x=\L^r(\e_{r-1}\d_r fx,\ldots,\e_0\d_1^r fx,fx),$$
and it follows from Proposition~\ref{P7.4} that $f'x=fx$ as
required.

It remains to show that $f'$~is a partial morphism by verifying
the conditions of Definition~\ref{D8.1}. It clearly suffices to
consider morphisms with terminus~$t$ and corank~$s$, and we
argue as follows.

(1) Let $x$ be a morphism with terminus~$t$, with corank~$s$
and with rank~$r$, and consider a face $\d_i x$ with $0\leq
i<r$. In this case $\d_i x$ has terminus~$t$, corank~$s$ and
rank $r-1$, so that $\d_i x\in S_t^s$ and it follows from
Proposition~\ref{P5.1} that
$$f'\d_i x
 =\L^{r-1}(f\d_i\alpha_{r-1}x,\ldots,f\d_i\alpha_{i+1}x,
 f\alpha_{i-1}x,\ldots,f\alpha_0 x,\e_{r-1}^s f\d_i\gamma x).$$
Since $f\d_i\alpha_p x=\d_i f\alpha_p x$ for $i<p<r$ and since
$$\e_{r-1}^s f\d_i\gamma x
 =\e_{r-1}^s\d_i f\gamma x
 =\d_i\e_r^s f\gamma x,$$
it now follows from Proposition~\ref{P7.3} that
$$f'\d_i x
 =\d_i\L^r(f\alpha_{r-1}x,\ldots,f\alpha_0 x,\e_r^s f\gamma x)
 =\d_i f'x.$$

Now let $x$ be a morphism with terminus~$t$, with corank~$s$
and with rank zero, and consider the face $\d_0 x$. This exists
only in cases with $s>0$. It has terminus~$t$, corank $s-1$ and
rank zero, and it therefore belongs to~$S_t^{s-1}$. It follows
that $\d_0 x\in\ S_t^s$. It also follows from
Theorem~\ref{T4.11} and Proposition~\ref{P7.3} that
$$f'\d_0 x
 =f\d_0 x
 =f\d_0\L^0(\e_0^s\gamma x)
 =f\d_0\e_0^s\gamma x
 =f\e_0^{s-1}\gamma x
 =\e_0^{s-1}f\gamma x$$
and that
$$\d_0 f'x
 =\d_0\L^0(\e_0^s f\gamma x)
 =\d_0\e_0^s f\gamma x
 =\e_0^{s-1}f\gamma x;$$
therefore $f'\d_0 x=\d_0 f' x$.

Now let $x$ be a morphism with terminus~$t$, with corank~$s$,
and with rank $r>0$, and consider the face $\d_r x$. If $s=0$
then $\d_r x$ has terminus less than~$t$; if $s>0$ then $\d_r
x$ has terminus~$t$ and corank $s-1$; in both cases, $\d_r x\in
S_t^{s-1}$. It follows that $\d_r x\in S_t^s$. It also follows
from Theorem~\ref{T4.11} and Proposition~\ref{P7.3} that
$$ f'\d_r x
 =f\d_r x
 =f\d_r\L^r(\alpha_{r-1}x,\ldots,\alpha_0 x,\e_r^s\gamma x)
 =f\d_r\alpha_{r-1}x
 =\d_r f\alpha_{r-1}x$$
and
$$\d_r f'x
 =\d_r\L^r(f\alpha_{r-1}x,\ldots,f\alpha_0 x,\e_r^s f\gamma x)
 =\d_r f\alpha_{r-1}x;$$
therefore $f'\d_r x=\d_r f'x$.

Finally, let $x$ be a morphism with terminus~$t$, with
corank~$s$, and with rank~$r$, and consider $\d_i x$ for
$r<i\leq r+s$. In this case $\d_i x$ has terminus~$t$ and
corank $s-1$, so that $\d_i x\in S_t^{s-1}$; therefore $\d_i
x\in S_t^s$. It now follows from Theorem~\ref{T4.11} and
Proposition~\ref{P7.3} that
\begin{align*}
 f'\d_i x
 &=f\d_i x\\
 &=f\d_i\L^r(\alpha_{r-1}x,\ldots,\alpha_0 x,\e_r^s\gamma x)\\
 &=f\L^r(\d_i\alpha_{r-1}x,\ldots,\d_{i-r+1}\alpha_0 x,\d_i\e_r^s\gamma x)\\
 &=f\L^r(\d_i\alpha_{r-1}x,\ldots,\d_{i-r+1}\alpha_0 x,\e_r^{s-1}\gamma x).
\end{align*}
Here $\e_r^{s-1}\gamma x$ has terminus~$t$, rank~$r$ and corank
$s-1$, so it follows from Propositions \ref{P8.3}
and~\ref{P8.2} that
\begin{align*}
 f'\d_i x
 &=\L^r(f\d_i\alpha_{r-1}x,\ldots,f\d_{i-r+1}\alpha_0 x,f\e_r^{s-1}\gamma x)\\
 &=\L^r(\d_i f\alpha_{r-1}x,\ldots,\d_{i-r+1}f\alpha_0 x,\e_r^{s-1}f\gamma x).
\end{align*}
On the other hand, it follows from Proposition~\ref{P7.3} that
\begin{align*}
 \d_i f'x
 &=\L^r(\d_i f\alpha_{r-1}x,\ldots,\d_{i-r+1}f\alpha_0 x,\d_i\e_r^s f\gamma x)\\
 &=\L^r(\d_i f\alpha_{r-1}x,\ldots,\d_{i-r+1}f\alpha_0 x,\e_r^{s-1}f\gamma x);
\end{align*}
therefore $f'\d_i x=\d_i f'x$.

(2) Suppose that $x$~is a $1$-dimensional morphism with
terminus~$t$ and positive corank~$s$ lying in the image
of~$\e_0$. This can occur only in the case that $s=1$, and the
rank of~$x$ must be zero; therefore $f'x=\L^0(\e_0 f\gamma
x)=\e_0 f\gamma x$. It follows that $f'x$ is in the image
of~$\e_0$.

(3) Suppose that $x$~is a morphism with terminus~$t$ and
corank~$s$ lying in the image of~$\w_i$. Let the rank of~$x$
be~$r$, so that $0\leq i\leq r+s-2$. It follows from
Proposition~\ref{P5.2} that certain morphisms $\alpha_p x$ are
in certain sets $\im\w_j$, and it may also follow that $\gamma
x$ is in $\im\w_i$ or $\im\e_i$, The images $f\alpha_p x$ and
$f\gamma x$ then satisfy the same conditions, and it follows
from Proposition~\ref{P7.11} that $f'x\in\im\w_i$.

This completes the proof.
\end{proof}

From Lemmas \ref{L8.4}--\ref{L8.6} we get the main result.

\begin{theorem} \label{T8.7}
For $n\geq 0$, the graded set $\Or(-,n)$ is the set with
complicial identities freely generated by the identity
morphism~$\iota_n$ in $\Or(n,n)$.
\end{theorem}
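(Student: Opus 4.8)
The plan is to establish existence and uniqueness separately, with uniqueness disposed of almost immediately. For uniqueness I would appeal directly to Theorem~\ref{T4.12}: every morphism in $\Or(-,n)$ is built from $\iota_n$ using the face, degeneracy and wedge operations, and a morphism of sets with complicial identities commutes with all of these by definition. Hence any $f$ with $f\iota_n=u$ is completely determined on $\Or(-,n)$, so there is at most one such morphism.

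For existence the strategy is to assemble the partial morphisms produced by Lemmas~\ref{L8.4}--\ref{L8.6} into a single genuine morphism, following the recursion on terminus and corank described just before Lemma~\ref{L8.4}. I would build a sequence of partial morphisms $F_0,F_1,\ldots,F_n$, where $F_t$ is defined on $S_t$ and has degree $n-t$, maintaining throughout the invariant
$$F_t\,\d_{t+1}^{n-t}\iota_n=u.$$
The base case uses Lemma~\ref{L8.4} to define a degree-$n$ partial morphism on $S_0^{-1}$ sending $\d_1^n\iota_n$ to~$u$, which is then pushed up through $S_0^0,S_0^1,\ldots$ to all of~$S_0$ by repeated application of Lemma~\ref{L8.6}; since each such step is an extension, the value on $\d_1^n\iota_n$ is unchanged. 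To pass from $F_{t-1}$ to $F_t$, I would first apply Lemma~\ref{L8.5}, noting that $F_{t-1}$ has degree $n-t+1>0$ as required, to obtain a degree-$(n-t)$ partial morphism on $S_t^{-1}$ with $f\d_{t+1}^{n-t}\iota_n=F_{t-1}\d_t^{n-t+1}\iota_n$, and then invoke Lemma~\ref{L8.6} repeatedly to extend it to~$S_t$. The point that makes the invariant propagate is that $\d_{t+1}^{n-t}\iota_n=\rho_t\d_t^{n-t+1}\iota_n$ is a $t$-cone by Proposition~\ref{P4.5}, and therefore already lies in $S_t^{-1}$ (and in every later $S_t^s$); since $\d_t^{n-t+1}\iota_n$ has terminus $t-1$ and so lies in $S_{t-1}$, the displayed equation of Lemma~\ref{L8.5} reads $F_t\,\d_{t+1}^{n-t}\iota_n=F_{t-1}\d_t^{n-t+1}\iota_n=u$ once the invariant at stage $t-1$ is in force.

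Carrying this out $n$ times lowers the degree from $n$ to~$0$ and raises the terminus from $0$ to~$n$, producing a partial morphism $F_n$ on $S_n=\Or(-,n)$ of degree~$0$ with $F_n\iota_n=u$ (using $\d_{n+1}^0\iota_n=\iota_n$). Because $F_n$ has degree~$0$ it preserves dimensions, so Definition~\ref{D8.1}(1) together with Proposition~\ref{P8.2} shows that it commutes with all faces, degeneracies and wedges; hence $F_n$ is an honest morphism of sets with complicial identities, which completes existence and, with the uniqueness remark, the theorem.

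I expect the only real delicacy to be bookkeeping rather than mathematical content: one must check that the subsets $S_t^s$ nest as claimed, that the degree arithmetic ($n\mapsto n-1\mapsto\cdots\mapsto0$) leaves the terminal map at degree~$0$, and—most importantly—that the generator $\d_{t+1}^{n-t}\iota_n$ actually lies in the domain produced by Lemma~\ref{L8.5} at each stage, which is precisely the $t$-cone observation above. All the genuine difficulty has already been absorbed into Lemmas~\ref{L8.4}--\ref{L8.6}, and behind them into the canonical form of Theorem~\ref{T4.11} and the wedge-recognition criterion of Proposition~\ref{P7.11}, so the theorem itself is essentially an inductive assembly of these pieces.
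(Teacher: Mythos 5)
Your proposal is correct and follows essentially the same route as the paper: uniqueness from the generation result (Theorem~\ref{T4.12}), and existence by assembling the partial morphisms of Lemmas~\ref{L8.4}--\ref{L8.6} through the recursion on terminus and corank, with Proposition~\ref{P8.2} upgrading the final degree-zero partial morphism to a genuine morphism. The bookkeeping you flag (the invariant $F_t\d_{t+1}^{n-t}\iota_n=u$, the degree arithmetic, and the $t$-cone observation via Proposition~\ref{P4.5}) is exactly the content the paper leaves implicit in its ``etc.''
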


\begin{proof}
First, by Proposition~\ref{P6.6}, $\Or(-,n)$ is a set with
complicial identities.

Now let $u$ be an $n$-dimensional element in a set with
complicial identities~$U$. We must show that there is a unique
morphism from $\Or(-,n)$ to~$U$ sending~$\iota_n$ to~$u$.

We construct a suitable morphism as follows. By
Lemma~\ref{L8.4} there is a partial morphism on~$S_0^{-1}$
sending $\d_1^n\iota_n$ to~$u$; by repeated applications of
Lemma~\ref{L8.6}, there is a partial morphism on the entire
set~$S_0$ sending $\d_1^n\iota_n$ to~$u$; by Lemma~\ref{L8.5},
there is a partial morphism on~$S_1^{-1}$ sending
$\d_2^{n-1}\iota_n$ to~$u$; by Lemma~\ref{L8.6} there is a
partial morphism on the entire set~$S_1$ sending
$\d_2^{n-1}\iota_n$ to~$u$; etc. Since $\Or(-,n)=S_n$, we eventually obtain a partial
morphism $f\colon\Or(-,n)\to U$ such that $f\iota_n=u$. By
Proposition~\ref{P8.2}, $f$~is in fact a morphism of sets with
complicial identities. Since, according to Theorem~\ref{T4.12},
$\Or(-,n)$ is generated by~$\iota_n$, it follows that $f$~is
the only morphism from $\Or(-,n)$ to~$U$ with $f\iota_n=u$.

This completes the proof.
\end{proof}

\end{document}